\pgfplotsset{compat=1.15}
\newcommand{\defeq}{\vcentcolon=}
\definecolor{shadegray}{gray}{0.96} 	
 \newtheorem {definition} {Definition} [subsection] 
  \newtheorem {theorem}[definition] {Theorem} 
  \newtheorem {corollary}[definition]  {Corollary}
\newtheorem {prop}[definition] {Proposition}
\newtheorem {lemma}[definition] {Lemma} 
\newtheorem {remark}[definition] {Remark} 
\newtheorem {example}[definition] {Example} 
\numberwithin{equation}{section}
\newcommand{\theoremcolorbox}[2]{\def\theoremframecommand{\fcolorbox{#1}{#2}}}
\newtheorem{hyp}{Hypothesis}
\newcommand{\encad}[1]{%
\fbox{\begin{minipage}[t]{0.92\linewidth}%
#1\end{minipage}}}
\newcommand*{\EnsembleQuotient}[2]%
{\ensuremath{%
    #1/\!\raisebox{-.65ex}{\ensuremath{#2}}}}
\newcommand*{\RZ}{\EnsembleQuotient{\mathbb{R}}{\mathbb{Z}}}
\newcommand*{\rz}{\mathbb{R}/\mathbb{Z}}
\newcommand*{\fonction}[5]{
\begin{align*}
#1 : \left\{\begin{array}{lcl}  #2 &\to      &#3\\
                                #4    &\mapsto  &#5
\end{array} \right. .      
\end{align*} 
}
\newcommand*{\fonctionv}[5]{
\begin{align*}
#1 : \left\{\begin{array}{lcl}  #2 &\to      &#3\\
                                #4    &\mapsto  &#5
\end{array} \right. ,      
\end{align*} 
}
\newcommand{\uo}[3]{\underset{#1}{\overset{#2}{#3}}}
\title{Galton-Watson processes in dynamical environments}
\author{Thomas Morand\thanks{Université Paris-Saclay, CNRS, Laboratoire de mathématiques d’Orsay, 91405, Orsay, France\\ Email: thomas.morand@universite-paris-saclay.fr}\\\textit{Paris Saclay University}}
\date{2024}
\begin{document}  

\renewcommand{\proofname}{Proof}
\renewcommand\refname{References}
\renewcommand*\contentsname{Table of contents}
\newcommand{\argmin}{\text{argmin}}
\maketitle

\begin{abstract}

We define a model of Galton Watson processes in dynamical environments where the environment evolves according to a dynamical system $(\mathbb{X},T)$. Three behaviours are possible: uniformly subcritical, critical, and uniformly supercritical. We study the extinction probability $q$ in the uniformly supercritical case. In particular, we investigate the regularity of this application as a function of the environment $x$. In the critical case, we study the set of bad environments $N$ (where the probability of extinction is one), which is $T$-invariant. We give its Hausdorff dimension in some cases.
\end{abstract}

\section*{Introduction}
We define a model of Galton-Watson processes in dynamical environments. These are Galton-Watson processes where the reproduction law changes between generations. We give a discrete-time dynamical system defined on a compact space $(\mathbb{X},T)$ and a continuous law of reproduction $\mu$. Defined on $\mathbb{X}$, the reproduction law at generation $n$ is $\mu_{T^nx}$. 
These processes are special cases of Galton-Watson processes in varying environments studied in particular in \cite{MR0368197,MR0365733,MR2384553,MR4094390}.

To study this model, we use the probability generating function of the law of reproduction $\mu$, defined on $\mathbb{X}\times [0,1]$ as:
\begin{align*}
    \varphi(x,s) \defeq \sum_{k=0}^{+\infty} \mu_x(k)s^k.
\end{align*}
The extinction probability $q$ is a function defined on $\mathbb{X}$. When $q$ is equal to one, there is almost certain extinction of the process, and we say that $x\in\mathbb{X}$ is a bad environment. We will then denote $N$ the set of bad environments. However, since $N$ is a $T$-invariant set, using the point of view of ergodic theory, we can ask under what condition the measure of $N$ by an ergodic probability $T$ on $\mathbb{X}$ is zero or one. The work of Athreya and Karlin \cite{MR0298780}, under some integrability assumptions, answers this question. When $\nu$ be an $T$-ergodic probability on $\mathbb{X}$,\begin{align*}
    \nu(N)=0 \text{ if and only if }\mathbb{E}_\nu[\log m(x)]>0,
\end{align*}
where $m(x)$ is the expectation of the law $\mu_x$ for $x\in\mathbb{X}$.

Three behaviours are possible. The process is: \begin{itemize}
 \item \textbf{uniformly subcritical} if the set $N$ is of full measure according to all ergodic probability measures,
 \item \textbf{critical} if the set $N$ is of measure zero for some ergodic probability measure and of full measure for another,
 \item \textbf{uniformly supercritical} if the set $N$ is of measure zero according to all ergodic probability measures.
\end{itemize}

In the critical case, when $(\mathbb{X},T)$ is an expanding map of the circle or a two-dimensional Anosov, by adapting the work of Keller and Otani \cite{MR1418993,MR3170606}, we can look at the Haussdorf dimension of the set $N$ (Theorem~\ref{thm4}). This theorem allows us to affirm that the probability of survival is positive for $\nu$-almost all $x\in\mathbb{X}$ if $\nu$ has a Hausdorff dimension larger than that of $N$.

The probability of extinction $q$ is a solution of the functional equation: \begin{align*}
     q(x)=\varphi(x,q(Tx)).
\end{align*}
We then say that $q$ is an invariant graph. The regularity of invariant graphs of skew product systems has been studied in \cite{MR1605989,MR1677161,MR1898800,MR3816739} and the regularity of the probability of extinction of Galton-Watson processes in varying environments has been studied in \cite{MR2384553}. In the supercritical case and assuming some assumptions of integrability, one may wonder whether $x\in\mathbb{X}\mapsto q(x)$ inherits the regularity of $x\in\mathbb{X}\mapsto\mu_x$. Continuity will be well preserved by $q$ (Theorem~\ref{thm3}). An essential tool in the proof is the semi-uniform ergodic theorem \cite[Theorem 1.9]{MR1734626}. We also get bounds on the Hölder regularity of $q$ (Theorem~\ref{thm2.6}).
\newpage
\tableofcontents
\newpage
 \section{Model and results}

In this section, we will present the different Galton-Watson models found in the literature, introduce the Galton-Watson model in dynamical environments, present the results of this article, and observe the link between Galton-Watson processes in dynamical environments and skew products.

\subsection{Galton-Watson processes: classical case and random environments}\label{section1.1}

We will begin by defining Galton-Watson processes in three well-known cases: the classical case, in variable environments, and in random environments. These models will enable us to define the Galton-Watson processes in dynamical environments.

\subsubsection{Galton-Watson processes}

A Galton-Watson process is a stochastic process used to describe the evolution of a population. It is a discrete-time process where each individual born in generation $n$
dies at time $n+1$ and produces a random number of offspring at time $n+1$ who live, die, and reproduce in the same way independently. In the classical case, the reproduction law governing the number of descendants of an individual (a random variable with support in $\mathbb{N}$) is the same for all individuals. We will assume that the population at generation zero always equals one. Bienaymé \cite{bienayme1845loi} in 1845, then Galton and Watson \cite{10.2307/2841222} in 1875 introduced this model to describe the evolution of surnames in a population.

Let $\mu \in \mathcal{P}(\mathbb{N})$ be a probability measure on $\mathbb{N}$. The Galton-Watson process associated with the law of reproduction $\mu$ is the sequence of random variables $(Z_n)_{ n \in \mathbb{N}}$ defined recursively by:
\begin{align}
\left\{\begin{array}{ll}
   Z_0 &= 1 ,\\
    Z_{n+1} &= \underset{k=1}{\overset{Z_n}{\sum}}Y_{n,k}\text{ for all }n\in\mathbb{N},\label{eq3}
    \end{array}\right. 
\end{align}
where $(Y_{n,k})_{ (n,k) \in \mathbb{N}^2}$ is a family of independent random variables such that $Y_{n,k}$ is distributed according to $\mu$ for all $n,k \in \mathbb{N}$. For all $n \in \mathbb{N}$, $Z_n$ is a random variable representing the size of the population at the $n$th generation. We define the extinction set as:
\begin{align*}
    \text{Ext} \defeq \bigcup_{n\geq 0}\{Z_n=0\}.
\end{align*} 
Let $q\defeq\mathbb{P}(\text{Ext})$ the probability of extinction. To calculate this, we introduce the generating function of a probability measure $\mu\in\mathcal{P}(\mathbb{N})$, defined by:
\begin{align*} 
    \varphi_\mu(s) = \sum_{k=0}^{+\infty} s^k\mu(k) \text{ for all }s\in[0,1].
\end{align*}

$\varphi_\mu$ is analytic on [0,1), convex (strictly convex if $\mu(\{0,1\})<1$), $\varphi_\mu(0)=\mu(0)$, and $\varphi_\mu(1)=1$. Additionally, $\mu$ has a moment of order $k\in\mathbb{N}^*$ if and only if the $k$-th derivative of $\varphi_\mu$ has a finite limit at 1. In particular, $m\defeq\varphi_\mu'(1)$ is the expectation of $\mu$.

Then, the probability of extinction $q$ is the smallest $s\in[0,1]$ such that $\varphi_{\mu}(s)=s$. In particular, if $m\in[0,1]$ and $\mu\neq\delta_1$, then $q=1$, and if $m \in(1,+\infty]$, then $q<1$.

\begin{figure}[!ht]
    \center
\includegraphics[scale=0.64]{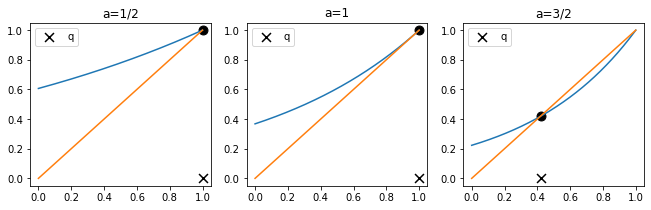}
\captionof{figure}{Probability-generating function for the Poisson distribution with parameter $a$, and the probability of extinction $q$ of the associated Galton-Watson process.} 
\end{figure}

\subsubsection{Galton-Watson processes in varying environments}

We now assume that the law of reproduction remains the same within a generation but evolves between generations. Let $(\mu_n)_{n\in\mathbb{N}}$ be a sequence of probability measures on $\mathbb{N}$. The Galton-Watson process in varying environments $(Z_n)_{n\in\mathbb{N}}$ associated with $(\mu_n)_{n\in\mathbb{N}}$ is defined recursively by Equation (\ref{eq3})
where $(Y_{n,k})_{ (n,k) \in \mathbb{N}^2}$ is a family of independent random variables such that $Y_{n,k}$ is distributed according to $\mu_n $ for all $n,k \in \mathbb{N}$. Jagers \cite{MR0368197} proposed a classification into supercritical, critical and subcritical regimes as in the classical case. Kersting \cite{MR4094390} gives conditions for almost certain extinction or strictly positive survival probability as a function of the first and second moments of the laws of reproduction. These processes have recently been the subject of renewed interest, see for example \cite{MR3371434,MR4019875,MR3969512,MR4388943}.

\subsubsection{Galton-Watson processes in random environments}

We assume that the sequence of environments is random and independent of reproduction. The most studied case is when $(\mu_n)_{n\in\mathbb{N}}$ is i.i.d.\ according to a probability law $\nu$ on $\mathcal{P}(\mathbb{N})$. 
We then have two levels of independent randomness: environments and offspring.

This model was introduced by Smith and Wilkinson \cite{MR0246380,MR0237006}. They discussed the probability of extinction. Athreya and Karlin \cite{MR0298780,MR0298781} reformulated and generalized the model, studied the probability of extinction, and gave limit theorems for the Galton-Watson processes. Geiger, Kersting, and Vatutin \cite{MR2123206} have studied the extinction time when extinction is almost certain. These processes are still being studied, see for example \cite{MR4259452,MR4721632,MR4779872,MR4740913}. More generally, Galton-Watson processes with $(\mu_n)_{n\in\mathbb{N}}$ a Markov process have also been studied \cite{MR0288885,MR2600128,MR3958440}.

\subsection{Galton-Watson processes: dynamical environments}
We define the model of Galton-Watson in dynamical environments from Galton-Watson processes in varying environments. We will then define the probability generating function $\varphi$, the probability of extinction $q$, and the set of bad environments $N$.
\subsubsection{Presentation}

In this article, we consider $(\mathbb{X},\mathcal{B}(\mathbb{X}),T)$ a topological discrete-time dynamical system, where:
\begin{itemize}
    \item $(\mathbb{X},d)$ is a compact metric space equipped with its Borel algebra $\mathcal{B}(\mathbb{X})$,
    \item $T:\mathbb{X}\to\mathbb{X}$ is a continuous map.
\end{itemize}
We denote by $\mathcal{P}_T(\mathbb{X})$ the set of $T$-invariant probabilities on $\mathbb{X}$ endowed with the weak*-topology, and by $\mathcal{E}_T(\mathbb{X})$ the set of $T$-ergodic probabilities on $\mathbb{X}$.

We equip $\mathcal{P}(\mathbb{N})$ with the topology generated by $\ell^1$ and its associated Borel algebra. Additionally, let:
\begin{align*}
    \mu:\left\{
    \begin{array}{lcl}
    \mathbb{X}&\to&\mathcal{P}(\mathbb{N})\\
    x&\mapsto&\mu_x
    \end{array}
    \right. 
\end{align*}
and assume (H\ref{hyp1}).

\encad{
\begin{hyp}[H\ref{hyp1}]\label{hyp1}~
\begin{enumerate}[label=\alph*)]
    \item $x\in\mathbb{X}\mapsto \mu_x$ is continuous.
    \item $\mu_x\neq\delta_0$ for all $x\in\mathbb{X}$.
\end{enumerate}
\end{hyp}}\medskip

For each $x\in\mathbb{X}$, let $(Z_n(x))_{n\in\mathbb{N}}$ be the Galton-Watson process in varying environments associated with $(\mu_{T^nx})_{n\in\mathbb{N}}$. In other words, $(Z_n(x))_{n\in\mathbb{N}}$ is defined recursively by Equation (\ref{eq3})
where $(Y_{n,k})_{ (n,k) \in \mathbb{N}^2}$ is a family of independent random variables such that for all $(n,k) \in \mathbb{N}^2$, $Y_{n,k}$ distributed according to $\mu_{T^n x} $.
The family of random variables $(Z_n(x))_{n\in\mathbb{N}}$ is the Galton-Watson process associated with the discrete-time dynamical system $(\mathbb{X},\mathcal{B}(\mathbb{X}),T)$, the reproduction law $\mu$, and the point $x\in\mathbb{X}$.

\begin{example}This example is related to the other Galton-Watson models presented in Subsection~\ref{section1.1}.
    \begin{itemize}
        \item If $x\in\mathbb{X}$ is a fixed point of the transformation $T$, then the process $(Z_n(x))_{n\in\mathbb{N}}$ is a classical Galton-Watson process with a reproduction law $\mu_x$.
        \item If $(\mathbb{X},T)$ is a Bernoulli shift, $\mu$ depends only on the first variable, and $x\in\mathbb{X}$ is chosen according to an invariant distribution, then the process $(Z_n(x))_{n\in\mathbb{N}}$ is a Galton-Watson process in random i.i.d.\ environments.
    \end{itemize}
\end{example}

Example~\ref{ex1} will be used several times in the rest of this article to illustrate our results.

\begin{example}\label{ex1} Let:\begin{itemize}
    \item $\mathbb{X}\defeq\RZ$ with the Borel algebra $\mathcal{B}(\RZ)$ and the usual distance on the circle,
    \item $ T\defeq \left\{
    \begin{array}{lcl}\RZ&\to&\RZ\\
    x &\mapsto &2x \text{ modulo } 1\end{array}\right.,$
    \item For all $\lambda\in\mathbb{R}$, $\mu_{\lambda,.}\defeq\left\{
    \begin{array}{lcl}\RZ&\to&\mathcal{P}(\mathbb{N})\\
    x &\mapsto &\text{Pois}\left(e^{\lambda-\cos{(2\pi x)}}\right)\end{array}
\right. $.
\end{itemize}
For each $\lambda\in\mathbb{R}$, this defines a Galton-Watson process in dynamical environments which satisfies (H\ref{hyp1}). 
In this example, we will add an index $\lambda$ to all the objects defined for the study of this model to show the dependency on the parameter $\lambda$.
\end{example}

\subsubsection{Extinction and generating function}\label{section3}

In dynamical environments, the law of reproduction depends on the environment, so the generating functions of the law of reproduction depend on it too.
Propositions of this sub-subsection are proven in Section~\ref{section2}.

\begin{definition}~
The probability generating function of the law of reproduction $\mu$ is defined on $\mathbb{X}\times [0,1]$ as:
\begin{align*}
    \varphi(x,s) \defeq \sum_{k=0}^{+\infty} \mu_x(k)s^k = \mathbb{E}[s^{Y(x)}],
\end{align*}
where $Y(x)$ is distributed according to $\mu_{x} $.

For any $n \in \mathbb{N}$, the probability generating function of the distribution of $Z_n(x)$ is:
\begin{align*}
    \varphi^{(n)}(x,s) \defeq \sum_{k=0}^{+\infty} \mu_x^{(n)}(k)s^k= \mathbb{E}[s^{Z_n(x)}],
\end{align*}
where $x\in\mathbb{X}$, $s\in[0,1]$, and $\mu_x^{(n)}$ is the distribution of $Z_n(x)$.
\end{definition}

For any $x \in \mathbb{X}$ and $s \in [0,1]$,
\begin{align*}
\varphi^{(0)}(x,s)&=s,\\
    \varphi^{(1)}(x,s) &=\varphi(x,s).
\end{align*}

More generally, we relate the probability generating function of the population at successive times:

\begin{prop}\label{prop5}
For any $x \in \mathbb{X}$, $n,k\in \mathbb{N}$, and $s \in [0,1]$:
    \begin{align}\label{eq5}
 \varphi^{(n+k)}(x,s) = \varphi^{(k)}(x,\varphi^{(n)}(T^kx,s)).
    \end{align}
\end{prop}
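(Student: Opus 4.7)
The approach is probabilistic: both sides are probability generating functions of integer-valued random variables, so I would establish the identity at the level of the populations themselves and then pass to generating functions. I would read the left-hand side as $\varphi^{(n+k)}(x,s)=\mathbb{E}[s^{Z_{n+k}(x)}]$ and split the tree of descendants according to the branching structure.

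The key is the branching decomposition. Conditioning on the first $k$ generations of the process, each of the $Z_k(x)$ individuals alive at time $k$ initiates over the next $n$ generations a subtree whose successive offspring distributions are $\mu_{T^k x},\mu_{T^{k+1}x},\ldots,\mu_{T^{k+n-1}x}$, which are exactly those defining $Z_n(T^k x)$. The independence of the family $(Y_{n,k})_{(n,k)\in\mathbb{N}^2}$ built into Equation (\ref{eq3}) makes these $Z_k(x)$ subtrees mutually independent copies of $Z_n(T^k x)$, also independent of $Z_k(x)$ itself, so that
\begin{align*}
    Z_{n+k}(x)\stackrel{d}{=}\sum_{i=1}^{Z_k(x)}\widetilde{Z}_n^{(i)}, \qquad \widetilde{Z}_n^{(i)}\text{ i.i.d.\ }\sim Z_n(T^k x).
\end{align*}
Conditioning on $Z_k(x)$ and exploiting this independence gives
\begin{align*}
    \varphi^{(n+k)}(x,s)=\mathbb{E}\bigl[\varphi^{(n)}(T^k x,s)^{Z_k(x)}\bigr]=\varphi^{(k)}\bigl(x,\varphi^{(n)}(T^k x,s)\bigr),
\end{align*}
which is the claim.

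An alternative that avoids manipulating subtree laws at arbitrary depth is induction on $k$. The base case $k=1$ is the one-step identity $\varphi^{(n+1)}(x,s)=\varphi(x,\varphi^{(n)}(Tx,s))$, obtained by decomposing a tree of depth $n+1$ into its root and the subtrees rooted at its $Z_1(x)$ children, each an independent copy of $Z_n(Tx)$. The inductive step then reads
\begin{align*}
    \varphi^{(n+k+1)}(x,s)
    &=\varphi\bigl(x,\varphi^{(n+k)}(Tx,s)\bigr)\\
    &=\varphi\bigl(x,\varphi^{(k)}(Tx,\varphi^{(n)}(T^{k+1}x,s))\bigr)\\
    &=\varphi^{(k+1)}\bigl(x,\varphi^{(n)}(T^{k+1}x,s)\bigr),
\end{align*}
applying the one-step identity, then the inductive hypothesis at $Tx$, then the one-step identity in reverse.

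The main obstacle is conceptual rather than technical: one must articulate carefully why the subtrees rooted at generation $k$ are distributed as independent copies of the Galton-Watson process issued from $T^k x$. In the dynamical setting this boils down to the elementary observation that the shifted sequence of environments $(\mu_{T^{k+j}x})_{j\in\mathbb{N}}$ coincides with $(\mu_{T^j(T^k x)})_{j\in\mathbb{N}}$, so that the law of each subtree really is that of $(Z_j(T^k x))_{j\le n}$. Once this point is written down carefully the computation above, or equivalently the induction, is essentially a one-line identity.
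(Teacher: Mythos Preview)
Your proof is correct. The paper also argues probabilistically and then inducts, but it peels off the \emph{last} generation rather than the first: it shows
\[
\varphi^{(n+1)}(x,s)=\varphi^{(n)}(x,\varphi(T^n x,s))
\]
by conditioning on $Z_n(x)$ (each of the $Z_n(x)$ individuals at time $n$ reproduces according to $\mu_{T^n x}$), and then says the full statement follows by induction on $n$. Your one-step identity $\varphi^{(n+1)}(x,s)=\varphi(x,\varphi^{(n)}(Tx,s))$ conditions instead on $Z_1(x)$ and inducts on $k$; your direct branching decomposition at depth $k$ simply performs the whole induction at once. The two decompositions are dual and equally short; the paper's version has the mild advantage that conditioning on $Z_n(x)$ requires only the independence structure already explicit in Equation~(\ref{eq3}), whereas your first-generation split needs the extra (elementary) remark you correctly flag, namely that each subtree rooted at generation $k$ has the law of $(Z_j(T^k x))_{j\ge 0}$.
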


The probability generating function allows us to define the probability of extinction.

\begin{definition}
We define the probability of extinction $q$ for all $x\in\mathbb{X}$, by:\begin{align*}
    q(x)\defeq\underset{n\to\infty}{\lim}\nearrow \varphi^{(n)}(x,0)=\underset{n\to\infty}{\lim}\nearrow\mathbb{P}(Z_n(x)=0).
\end{align*}
\end{definition}

A function that verifies the functional equation of Proposition~\ref{prop1} is called an invariant graph of our system. The constant function equal to 1 is a solution to this functional equation.

\begin{prop}\label{prop1}
Assume (H\ref{hyp1}), for each $x\in\mathbb{X}$:
\begin{align}
    q(x)=\varphi(x,q(Tx)).
\end{align}
\end{prop}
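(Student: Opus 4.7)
The plan is to take the functional identity from Proposition~\ref{prop5} and pass to the limit along the defining sequence for $q$. Specifically, I would set $k=1$ in (\ref{eq5}) and evaluate at $s=0$ to obtain
\begin{align*}
\varphi^{(n+1)}(x,0) = \varphi\bigl(x,\varphi^{(n)}(Tx,0)\bigr)
\end{align*}
for every $x \in \mathbb{X}$ and every $n \in \mathbb{N}$. By the very definition of $q$, the left-hand side increases to $q(x)$ as $n \to \infty$, and $\varphi^{(n)}(Tx,0) \nearrow q(Tx)$.

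To conclude it then suffices to justify passing the limit inside $\varphi(x,\cdot)$ on the right. This is the only real content of the proof. Since $\varphi(x,s) = \sum_{k\geq 0}\mu_x(k)s^k$ with $\sum_{k\geq 0}\mu_x(k)=1$, the series is dominated termwise on $[0,1]$ by the summable family $(\mu_x(k))_{k\in\mathbb{N}}$, so by dominated convergence (or equivalently Abel's theorem applied to a probability generating function) $s \mapsto \varphi(x,s)$ is continuous on the whole closed interval $[0,1]$, including the endpoint $s=1$. Applying this continuity to the increasing sequence $\varphi^{(n)}(Tx,0) \to q(Tx)$ yields
\begin{align*}
q(x) = \lim_{n\to\infty} \varphi^{(n+1)}(x,0) = \lim_{n\to\infty} \varphi\bigl(x,\varphi^{(n)}(Tx,0)\bigr) = \varphi(x,q(Tx)),
\end{align*}
which is the required identity.

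The main (and only) subtlety is the continuity of $\varphi(x,\cdot)$ at $s=1$, because the pointwise limit of the partial sums could in principle lose mass at the boundary; this is handled by the observation above that the total mass of $\mu_x$ is exactly one, giving a uniform summable majorant. Hypothesis~(H\ref{hyp1}) is not actually used for this particular statement: part~a) would only be needed if we wanted joint continuity in $(x,s)$, and part~b) (which rules out $\mu_x=\delta_0$) becomes relevant only when one later discusses whether $q(x)<1$ is the smallest fixed point. Here the identity follows from the Markov-type composition rule (\ref{eq5}) alone.
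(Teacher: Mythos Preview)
Your proof is correct and follows essentially the same route as the paper: apply the composition identity~(\ref{eq5}) with $k=1$ at $s=0$, then pass to the limit using continuity of $\varphi(x,\cdot)$ in the second variable. The paper actually proves the slightly stronger Lemma~\ref{lem2} (the identity $q(x)=\varphi^{(k)}(x,q(T^kx))$ for all $k$) and invokes the \emph{joint} continuity of $\varphi^{(k)}$ from Corollary~\ref{cor5}, which in turn uses (H\ref{hyp1}a); your observation that only continuity in $s$ is needed---automatic for any probability generating function---is correct, so (H\ref{hyp1}) is indeed not required for this particular identity.
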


As in the classical case, the expectation of the laws of reproduction allows us to express the probability of extinction of the process.

\begin{definition}
  For all $x\in\mathbb{X}$, let:
\begin{align*}
    m(x)\defeq \partial_s\varphi(x,1)\in (0,+\infty].
\end{align*}
\end{definition} 
For all $x\in\mathbb{X}$, $m(x)$ is the expectation of $\mu_x$.

\begin{definition}
The set of bad environments is $N\defeq\{x\in\mathbb{X}:q(x)=1\}$.
\end{definition}

The set $N$ is the set of environments where the process $\{Z_n(x),n\in\mathbb{N}\}$ almost surely goes extinct, i.e., where $\mathbb{P}\left(\underset{n\geq 0}{\bigcup}\{Z_n(x)=0\}\right)=1$.

\subsection{Main results}

We present the main results of this article on the regularity of the invariant graph $q$ and on the measure and the topology of the set of bad environments $N$.

\subsubsection{The set of bad environments}

\paragraph{Measure of the set of bad environments}~\\
The set $N$ is difficult to describe precisely (see Lemma~\ref{lem6}). We will begin our study by using the point of view of ergodic theory and, therefore, by calculating the measure of this set by an invariant measure (and even ergodic without losing generality through the ergodic decomposition). The set $N$ is $T$-invariant (see Proposition~\ref{cor2}), so its measure according to an ergodic measure equals $0$ or $1$.
The results of Athreya and Karlin ~\cite[Corollary~1]{MR0298780} and ~\cite[Theorem~3]{MR0298780} apply if the environment is a stationary and ergodic process, which is the case if we fix an ergodic measure. Corollary~\ref{cor4} and Theorem~\ref{thm2} (which are results of Athreya and Karlin adapted in the context of our model) provide criteria to determine whether or not there is almost sure extinction of the process in the case where $\nu$ is an ergodic measure. Theorem~\ref{thm1} is a direct corollary of these results in the case of our model. The proof is in Subsection~\ref{section3.1}. 

\begin{theorem}\label{thm1}
    Assume (H\ref{hyp1}). Let $\nu\in\mathcal{E}_T(\mathbb{X})$. Then $\nu(N)=0$ if and only if \newline$\mathbb{E}_\nu[\log m(x)]>0$.
\end{theorem}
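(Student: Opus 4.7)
The plan is to reduce Theorem~\ref{thm1} to the Athreya--Karlin criterion for Galton--Watson processes in stationary ergodic random environments, stated in the excerpt as Theorem~\ref{thm2} (and specialised to the form needed here as Corollary~\ref{cor4}). The bridge between the dynamical and the random--environment settings is the observation that, whenever $\nu \in \mathcal{E}_T(\mathbb{X})$, picking the initial point $x$ according to $\nu$ makes the sequence of reproduction laws $(\mu_{T^n x})_{n\geq 0}$ stationary and ergodic. Conditionally on such an $x$, the process $(Z_n(x))_{n\geq 0}$ is therefore exactly a Galton--Watson process in a stationary ergodic random environment, to which the Athreya--Karlin criterion applies verbatim.

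The argument then proceeds in three short steps. First, by Proposition~\ref{cor2}, the set $N$ is $T$-invariant, so ergodicity of $\nu$ gives the dichotomy $\nu(N)\in\{0,1\}$. Second, the annealed probability of extinction when $x$ is drawn according to $\nu$ equals $\int_{\mathbb{X}} q(x)\, d\nu(x)$; by the dichotomy of the previous step, this integral is equal to $1$ precisely when $\nu(N)=1$, so $\nu(N)=0$ is equivalent to $\int q\, d\nu < 1$, i.e.\ to strictly positive annealed survival. Third, applying the Athreya--Karlin criterion to the stationary ergodic environment $(\mu_{T^n x})_n$ identifies strict positivity of the annealed survival probability with the condition $\mathbb{E}_\nu(\log m(x))>0$, which closes the chain of equivalences.

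The main obstacle I expect lies in checking that the integrability hypotheses under which Theorem~\ref{thm2} is formulated are automatic in the dynamical framework. Hypothesis (H\ref{hyp1}) gives continuity of $x\mapsto \mu_x$ into $\mathcal{P}(\mathbb{N})$ equipped with the $\ell^1$-topology on the compact space $\mathbb{X}$, and this should be enough to provide the uniform tail control on $\mu_x$ needed to invoke Athreya--Karlin; one may also have to treat separately the degenerate case $\mathbb{E}_\nu(\log m)=-\infty$, for example by combining Birkhoff's theorem applied to $\log m$ along the orbit of $x$ with the iterated identity of Proposition~\ref{prop5} to force $q(x)=1$ for $\nu$-almost every $x$ and thus $\nu(N)=1$ directly.
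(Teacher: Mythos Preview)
Your approach is essentially the paper's: reduce to the Athreya--Karlin criterion (Corollary~\ref{cor4} and Theorem~\ref{thm2}) and verify its integrability hypotheses. Two remarks sharpen your sketch. First, the integrability check is not about tail control of $\mu_x$ at infinity but about $\mu_x(0)$: since $x\mapsto\mu_x(0)$ is continuous on the compact $\mathbb{X}$ and $\mu_x(0)<1$ for every $x$ by (H\ref{hyp1}b), one has $\sup_{x\in\mathbb{X}}\mu_x(0)<1$. This single bound gives both $\mathbb{E}_\nu[-\log(1-\varphi(x,0))]<\infty$ (the hypothesis of Theorem~\ref{thm2}) and $m(x)\geq 1-\mu_x(0)$ bounded away from $0$, hence $\mathbb{E}_\nu[(\log m)^-]<\infty$. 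Second, and consequently, the case $\mathbb{E}_\nu(\log m)=-\infty$ never occurs, so your proposed separate treatment via Birkhoff is unnecessary. The annealed-probability detour in your second step is likewise superfluous: the Athreya--Karlin statements, as quoted in the paper, are already about $\nu(N)$ directly.
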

We can define:\begin{align*}
    \lambda_{\min}\defeq& \inf_{\nu\in\mathcal{P}_T(\mathbb{X})} \mathbb{E}_\nu[\log m],\\
    \lambda_{\max}\defeq& \sup_{\nu\in\mathcal{P}_T(\mathbb{X})} \mathbb{E}_\nu[\log m].
\end{align*}

By Theorem~\ref{thm1}, the process is uniformly subcritical if $\lambda_{\max}\leq0$, critical if $\lambda_{\min}\leq0<\lambda_{\max}$, and uniformly supercritical if $\lambda_{\min}>0$. Determining the regime of a Galton-Watson process in dynamical environments is equivalent to solving an ergodic optimisation problem. These questions have been studied by \cite{MR4000508} for example.

\paragraph{Dimension of the set of bad environments}~\\
 Theorem~\ref{thm4} and Theorem~\ref{thm5} give the Hausdorff dimension of the set $N$ in two types of system. These theorems are adapted from \cite[Theorem~2]{MR3170606} in a slightly different model. The similarities and differences between the two models are developed in Subsection~\ref{section1}. Theorem~\ref{thm4} and Theorem~\ref{thm5} are proved in Subsection~\ref{section2.2}.

\encad{
\begin{hyp}[H\ref{hyp5}]\label{hyp5}~
    \begin{enumerate}
      \item There exists $\alpha\in (0,1]$ such that $x\in\mathbb{X}\mapsto \mu_x$ is $\alpha$-Hölder continuous for the $\ell^1$ norm on $\mathcal{P}(\mathbb{N})$.
    \item $\mu_x(\{0,1\})<1$ for all $x\in\mathbb{X}$.
    \item Critical case: $\lambda_{\min}<0<\lambda_{\max}$.
    \item The second moment of $(\mu_x)_{x\in\mathbb{X}}$ is uniformly bounded: $\underset{x\in\mathbb{X}}{\sup}~\underset{k=0}{\overset{+\infty}{\sum}}k^2\mu_x(k)<+\infty$.
\end{enumerate} 
\end{hyp}}

\begin{theorem}\label{thm4}
    Assume (H\ref{hyp5}), that $\mathbb{X}$ is a two-dimensional compact Riemannian manifold, and that $T$ a topologically mixing $\mathcal{C}^2$-Anosov diffeomorphism. Denote\\ $T_x\mathbb{X}=E^s(x) \bigoplus E^u(x)$ the splitting decomposition of the tangent fibre over $x\in\mathbb{X}$ into its stable and unstable subspaces.
    \begin{align}
        dim_H(N)=
    \left\{\begin{array}{ll}
    &2\text{ if } \mathbb{E}_{\nu_{SRB}}[\log m]\leq 0,\\
    &\underset{\nu\in\mathcal{P}_T(\mathbb{X})}{\max}\left\{\frac{h_T(\nu)}{\nu(\log \lVert dT|E^u\rVert)}:\int_{\mathbb{X}}\log m(x)\, \mathrm{d}\nu(x)=0\right\}+1 \text{ else.}
    \end{array}
\right.
    \end{align}
    $\nu_{SRB}$ is the unique Sinai-Ruelle-Bowen measure of $T$ characterized by the variational principle (see \cite[Section 4B]{MR0442989}): \begin{align*}
        &h_T(\nu_{SRB})-\nu_{SRB}(\log \lVert dT|E^u\rVert)=\sup_{\nu\in\mathcal{P}_T(\mathbb{X})}\big(h_T(\nu)-\nu(\log \lVert dT|E^u\rVert)\big)=0.
    \end{align*}
\end{theorem}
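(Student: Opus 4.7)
The proof adapts the framework of Keller and Otani \cite{MR3170606}, who computed the Hausdorff dimension of the critical set of an invariant graph over an expanding interval map; the additional ingredient required in the present two-dimensional Anosov setting is a saturation argument in the stable direction, which accounts for the ``$+1$'' in the formula.

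The first step is to show that $N$ is saturated by local stable manifolds. Iterating the functional equation of Proposition~\ref{prop1} gives $q(x) = \varphi^{(n)}(x, q(T^n x))$ for every $n \in \mathbb{N}$, so $q(x)$ is determined only by the forward orbit of $x$ through the reproduction laws $(\mu_{T^k x})_{k \geq 0}$. For $y \in W^s(x)$, the exponential stable contraction combined with the Hölder regularity of $\mu$ and the uniform second-moment bound assumed in (H\ref{hyp5}), the latter yielding Lipschitz control of $\varphi(\cdot, s)$ uniformly in $s \in [0,1]$, allows one to conclude that $q(x) = 1$ iff $q(y) = 1$. Consequently
\begin{align*}
    N = \bigcup_{x \in A} W^s_{\mathrm{loc}}(x)
\end{align*}
for some set $A$ lying on a single local unstable manifold, and from the Hölder product structure of $T$ one deduces $dim_H(N) = 1 + dim_H(A)$.

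If $\mathbb{E}_{\nu_{SRB}}[\log m] \leq 0$, then Theorem~\ref{thm1} yields $\nu_{SRB}(N) = 1$. Because the SRB measure has absolutely continuous conditionals on unstable leaves, the slice $N \cap W^u_{\mathrm{loc}}(x_0)$ has positive one-dimensional Lebesgue measure for $\nu_{SRB}$-generic $x_0$, so $dim_H(A) = 1$ and $dim_H(N) = 2$. In the complementary case, one identifies the unstable slice $A$, up to a set of strictly smaller Hausdorff dimension, with the set of points of the unstable leaf whose forward orbits only ``see'' invariant measures $\nu$ satisfying $\int \log m\, d\nu \leq 0$. A Bowen-type variational formula, obtained by transferring the Keller--Otani argument to a single unstable leaf, then yields
\begin{align*}
    dim_H(A) = \max_{\nu \in \mathcal{P}_T(\mathbb{X})} \left\{ \frac{h_T(\nu)}{\nu(\log \lVert dT | E^u \rVert)} : \int_\mathbb{X} \log m\, d\nu = 0 \right\},
\end{align*}
the maximum being attained by weak-$*$ compactness of $\mathcal{P}_T(\mathbb{X})$ and upper semi-continuity of the entropy and of the potentials under (H\ref{hyp5}).

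The main obstacle will be transferring Keller--Otani's argument, originally developed over an expanding interval map, to the Anosov setting. Although the stable saturation reduces the problem to a single unstable leaf, the stable holonomies of $T$ are only Hölder continuous, so both the product identity $dim_H(N) = 1 + dim_H(A)$ and the identification of $A$ with the $\log m$-Lyapunov critical set of the cocycle require careful control of these holonomies. The uniform second-moment assumption in (H\ref{hyp5}) is precisely what is needed to guarantee the uniform Lipschitz regularity of $\varphi$ near $s = 1$ that underpins the comparison.
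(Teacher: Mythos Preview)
Your proposal rests on a misreading of \cite{MR3170606}: Keller and Otani work \emph{over a two-dimensional topologically mixing $\mathcal{C}^2$-Anosov diffeomorphism}, not over an expanding interval map (see Subsection~\ref{section1} of the paper, where their setting is recalled verbatim). Hence the ``main obstacle'' you identify---transferring the argument from an interval map to the Anosov setting and controlling the Hölder holonomies---simply does not arise. The paper's proof of Theorem~\ref{thm4} is essentially a citation: once the correspondence table of Subsection~\ref{section1} is in place, \cite[Theorem~2]{MR3170606} applies verbatim with $u=\log\lVert dT|E^u\rVert$, and the ``$+1$'' from the stable direction is already part of Keller--Otani's statement. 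It is rather Theorem~\ref{thm5} (the expanding circle map) that requires adapting their proof, by dropping the stable direction.

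The actual bridge between the extinction set $N$ and the Keller--Otani framework is not a direct stable-saturation argument for $q$, but Lemma~\ref{lem6}: under (H\ref{hyp5}) one has
\[
\{x:\Gamma(x)<0\}\ \subset\ N\ \subset\ \{x:\Gamma(x)\leq 0\},\qquad \Gamma(x)=\liminf_{n}\tfrac{1}{n}\sum_{k=0}^{n-1}\log m(T^kx),
\]
together with Lemma~\ref{lem14} ensuring that $\log m$ is Hölder. This reduces the problem to computing the Hausdorff dimension of Birkhoff-type sublevel sets of a Hölder potential, which is exactly the input of \cite{MR3170606} (via the multifractal results of \cite{MR2434246}). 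Your attempt to prove directly that $q(x)=1\Leftrightarrow q(y)=1$ for $y\in W^s(x)$ from Hölder continuity of $\mu$ and second-moment bounds is both harder and unnecessary; moreover, as stated it is only a sketch, since passing the identity $q(x)=\varphi^{(n)}(x,q(T^nx))$ to the limit along a stable leaf does not immediately yield the equivalence without an argument of the type provided by Lemma~\ref{lem6}.
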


\begin{theorem}\label{thm5}
Assume (H\ref{hyp5}), that $\mathbb{X}=\RZ$, and that $T$ is a $\mathcal{C}^2$ uniformly expanding transformation (there exists $\kappa>1$ such that for all $x\in\RZ$, $|T'(x)|>\kappa$).
    \begin{align}
        dim_H(N)=
    \left\{\begin{array}{ll}
    &1\text{ if } \mathbb{E}_{\nu_{Leb}}[\log m]\leq 0,\\
    &\underset{\nu\in\mathcal{P}_T(\rz)}{\max}\left\{\frac{h_T(\nu)}{\nu(\log T')}:\int_{\mathbb{R}/\mathbb{Z}}\log m(x)\, \mathrm{d}\nu(x)=0\right\} \text{ else.}
    \end{array}
    \right.
    \end{align}
 $\nu_{Leb}$ is the unique invariant probability measure absolutely continuous with respect to the Lebesgue measure (see \cite{MR0245761}). Moreover (see \cite[Exercise 2.7]{MR1793194}), 
 \begin{align*}
        &h_T(\nu_{Leb})-\nu_{Leb}(\log T')=\sup_{\nu\in\mathcal{P}_T(\mathbb{R}/\mathbb{Z})}\big(h_T(\nu)-\nu(\log T')\big)=0.
    \end{align*} 
\end{theorem}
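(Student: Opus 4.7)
I would split the argument into the easy case $\mathbb{E}_{\nu_{Leb}}[\log m] \leq 0$ and the non-trivial case $\mathbb{E}_{\nu_{Leb}}[\log m] > 0$. The easy case is immediate: since $\nu_{Leb}$ is ergodic, Theorem~\ref{thm1} gives $\nu_{Leb}(N)=1$; because $\nu_{Leb}$ is equivalent to Lebesgue measure, $N$ has positive Lebesgue measure and hence $\dim_H(N)=1$.

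In the non-trivial case I would follow the promised adaptation of Keller-Otani. The starting point is the functional equation $q(x)=\varphi(x,q(Tx))$ combined with the Taylor expansion $\varphi(x,1-u)=1-m(x)u+O(u^{2})$ as $u\to 0^{+}$, valid uniformly in $x$ under the uniform second-moment hypothesis in (H\ref{hyp5}). Iterating, one obtains heuristically $1-q(x)\asymp \prod_{k=0}^{n-1}m(T^{k}x)\,(1-q(T^{n}x))$ along orbits that stay close to $N$, so that the membership in $N$ is governed by Birkhoff sums of $\log m$, and $N$ is essentially captured by the set of points where $\frac{1}{n}\sum_{k=0}^{n-1}\log m\circ T^{k}$ fails to have a positive limit. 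The problem is thus reduced to a multifractal analysis of the potential $\log m$ under the expanding dynamics $T$.

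For the lower bound, given any ergodic $T$-invariant $\nu$ with $\int\log m\,d\nu<0$, Theorem~\ref{thm1} gives $\nu(N)=1$, and the Rokhlin/Young dimension formula for $\mathcal{C}^2$ uniformly expanding maps yields the exact pointwise dimension of $\nu$ as $h_T(\nu)/\nu(\log T')$; hence $\dim_H N\geq h_T(\nu)/\nu(\log T')$. Letting $\int\log m\,d\nu\uparrow 0$ along a sequence of ergodic measures and exploiting upper semi-continuity of $h_T$ together with the standard density of ergodic measures in $\mathcal{P}_T(\mathbb{X})$ allows one to reach the whole supremum over the constraint surface $\{\int\log m\,d\nu=0\}$.

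The upper bound is the main obstacle, and I would attack it with a pressure/Bowen-type covering argument. One builds efficient covers of $N$ from Markov cylinders $[i_0,\ldots,i_{n-1}]$ whose first $n$ Birkhoff sums of $\log m$ are $\leq 0$; such a cylinder has diameter comparable to $\exp(-S_n\log T')$, so the $s$-dimensional Hausdorff sum is dominated by $\sum \exp(-s\,S_n\log T')$ restricted to the cylinders on which $S_n\log m\leq 0$. Computing the exponential growth rate of this constrained sum via a conditional variational principle for the topological pressure of $-s\log T'$ under the constraint $\frac{1}{n}S_n\log m=0$ produces the claimed supremum. The technical difficulty lies in passing rigorously from the formal relation $1-q(x)\asymp \prod m(T^{k}x)$ to these cylinder estimates: one must control the Taylor remainder uniformly (this is where the second-moment bound in (H\ref{hyp5}) is decisive), handle orbits for which $q(T^{n}x)$ does not yet approach $1$ by an exhaustion argument on sub-level sets of $1-q$, and invoke the thermodynamic formalism for $\mathcal{C}^2$ uniformly expanding maps to realize the constrained supremum as a zero of a suitable pressure function.
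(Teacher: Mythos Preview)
Your overall strategy---reduce membership in $N$ to the behaviour of Birkhoff sums of $\log m$ and then apply multifractal analysis for expanding maps---is exactly the paper's, and your argument for the easy case $\mathbb{E}_{\nu_{Leb}}[\log m]\le 0$ is in fact cleaner than the paper's (the paper goes through the continuity of $D(\lambda)$ rather than just invoking $\nu_{Leb}(N)=1$ and equivalence with Lebesgue). The reduction you describe via the Taylor expansion of $\varphi(x,1-u)$ is precisely what the paper packages as Lemma~\ref{lem6}, which gives $\{\Gamma<0\}\subset N\subset\{\Gamma\le 0\}$ with $\Gamma(x)=\liminf\frac{1}{n}\sum_{k=0}^{n-1}\log m(T^kx)$.

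There is, however, a genuine gap in your lower bound. You want to pass from ergodic $\nu$ with $\int\log m\,d\nu<0$ (for which $\nu(N)=1$ and $\dim\nu=h_T(\nu)/\nu(\log T')$) to the supremum over the constraint surface $\{\int\log m\,d\nu=0\}$. Upper semi-continuity of $h_T$ and density of ergodic measures do not by themselves give this: a weak$^*$ limit of ergodic measures with negative integral may land exactly on the constraint surface but is typically not ergodic, so you cannot apply Theorem~\ref{thm1} to it, nor the exact-dimension formula. What is actually needed is continuity of the function $\lambda\mapsto D(\lambda)=\max\{h_T(\nu)/\nu(\log T'):\int\log m\,d\nu=\lambda\}$ at $0$ from the left; the paper obtains this (Lemma~\ref{lem16}) from the thermodynamic formalism, and then feeds in the multifractal spectrum $\dim_H S_\lambda=D(\lambda)$ (Lemma~\ref{lem15}) together with $S_\lambda\subset N$ for $\lambda<0$.

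For the upper bound your covering-by-cylinders approach would work, but the paper takes a shorter route: it constructs a single Gibbs equilibrium measure $\nu$ for the potential $q\log m - D(0)\log T'$ (with $q<0$ chosen so that the pressure vanishes and $\int\log m\,d\nu=0$), and shows directly from the Gibbs property that $\underline{d}_\nu(x)\le D(0)$ for every $x$ in the superset $N^+=\{\Gamma\le 0\}\supset N$. The bound $\dim_H N\le D(0)$ then follows from the general principle that a uniform bound on lower pointwise dimension controls Hausdorff dimension. This bypasses the conditional variational principle you would need to evaluate the growth rate of your constrained cylinder sum.
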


Let $\nu\in\mathcal{P}(\mathbb{X})$ such that $dim_H(\nu)>\dim_H(N)$. Then $\nu(N)=0$ (see for example \cite[Proposition~10.2]{MR1102677}). Thus, for $\nu$-almost all $x\in\mathbb{X}$, the probability of survival of the process $(Z_n(x))_{n\in\mathbb{N}}$ is positive.

\subsubsection{Regularity of the invariant graph}

 The question arises whether the regularity of the reproduction law $\mu$ leads to regularity in the invariant graph $q$ in the uniformly supercritical case.
The Hölder regularity of invariant graphs of skew product systems has been studied in \cite{MR1605989,MR1677161} but with an invertible transformation. In \cite{MR1898800}, the authors study the $\mathcal{C}^k$ regularity for a hyperbolic dynamical system. Here the main tool to obtain the continuity of the invariant graph are the semi-uniform ergodic theorem \cite[Theorem 1.9]{MR1734626} and the convergence of the probability generating function of the size of the population \cite[Theorem 5]{MR0298780}.

\begin{remark}
    In the case of Example~\ref{ex1}, the dependence on $\lambda$ of the probability of extinction inherits the regularity in $\lambda$ of the probability generating function \cite[Theorem 2.2]{MR2384553}.
\end{remark}

\paragraph{Continuity in the supercritical case}~\\
Theorem~\ref{thm3} allows us to see how continuity is preserved by $q$ in the supercritical case.

\encad{
\begin{hyp}[H\ref{hyp2}]\label{hyp2}~
\begin{enumerate}[label=\alph*)]
    \item $x\in\mathbb{X}\mapsto \mu_x$ is continuous.
    \item $\mu_x\notin\{\delta_0,\delta_1\}$ for all $x\in\mathbb{X}$.
    \item For all $\nu\in\mathcal{E}_T(\mathbb{X})$, $\mathbb{E}_\nu[\log m(x)]>0$ (uniformly supercritical case).
    \item There exists $\Tilde{\mu}$ a positive measure on $\mathbb{N}$, which has a first moment, and which stochastically dominates all the $(\mu_x)_{x\in\mathbb{X}}$: for all $k\in\mathbb{N}$ and $x\in\mathbb{X}$, $
    \mu_x([k,+\infty))\leq \Tilde{\mu}([k,+\infty))$.\label{h1}
\end{enumerate}
\end{hyp}}\medskip

\begin{theorem}\label{thm3}
Under (H\ref{hyp2}), $q$ is continuous.
\end{theorem}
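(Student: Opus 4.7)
The plan is to combine the automatic lower semicontinuity of $q$ with an upper semicontinuity argument based on the invariant graph equation of Proposition~\ref{prop1}. Continuity of $\varphi$ on $\mathbb{X}\times[0,1]$ follows from (H\ref{hyp2})(a) and dominated convergence using the dominator $\Tilde\mu$, so each $\varphi^{(n)}(\cdot,0)$ is continuous on $\mathbb{X}$; since $\varphi^{(n)}(x,0)\nearrow q(x)$, $q$ is automatically lower semicontinuous as a monotone limit of continuous functions.

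The key preliminary step is to prove the uniform bound $\inf_{x\in\mathbb{X}}(1-q(x))\geq c>0$. Under (H\ref{hyp2}), the mean $m$ is continuous on $\mathbb{X}$, strictly positive, and bounded, so $\log m\in\mathcal{C}(\mathbb{X})$. The semi-uniform ergodic theorem then gives
\begin{align*}
\lim_{n\to\infty}\inf_{x\in\mathbb{X}}\frac{1}{n}\sum_{k=0}^{n-1}\log m(T^kx)=\lambda_{\min}>0,
\end{align*}
so that $\mathbb{E}[Z_n(x)]=\prod_{k=0}^{n-1}m(T^kx)\geq e^{n\lambda}$ uniformly in $x$ for $n$ large and some $\lambda\in(0,\lambda_{\min})$. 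Since (H\ref{hyp2})(d) only guarantees first moments, I would next truncate: denote $Z_n^K$ the Galton-Watson process coupled to $Z_n$ with offspring $Y\wedge K$, so $Z_n^K\leq Z_n$, the truncated offspring has variance bounded by $K^2$, and by (H\ref{hyp2})(d) the truncated mean $m^K$ converges uniformly to $m$ on $\mathbb{X}$. For $K$ large the truncated system remains uniformly supercritical, and the standard recursion
\begin{align*}
\frac{\mathbb{E}[Z_n^K(x)^2]}{\mathbb{E}[Z_n^K(x)]^2}=1+\sum_{k=0}^{n-1}\frac{\mathrm{Var}(Y^K_{T^kx})}{m^K(T^kx)^2\,\mathbb{E}[Z_k^K(x)]}
\end{align*}
combined with the uniform exponential growth of $\mathbb{E}[Z_k^K(x)]$ bounds this ratio uniformly in $n$ and $x$. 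The Paley-Zygmund inequality then yields $\mathbb{P}(Z_n^K(x)>0)\geq c_K>0$ uniformly for $n$ large, and since $\mathbb{P}(Z_n(x)>0)\geq\mathbb{P}(Z_n^K(x)>0)$ and the former decreases to $1-q(x)$, we conclude $1-q(x)\geq c_K$ for every $x$.

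Upper semicontinuity of $q$ then follows by an envelope argument. Set $q^*(x)\defeq\limsup_{y\to x}q(y)$, the smallest upper semicontinuous function above $q$; it satisfies $q\leq q^*\leq 1-c_K$. For any $x$, choose $y_n\to x$ with $q(y_n)\to q^*(x)$, extract a subsequence along which $q(Ty_n)\to\ell$, and note $\ell\leq q^*(Tx)$ since $Ty_n\to Tx$. Passing to the limit in the invariant graph equation using continuity and monotonicity of $\varphi$, one gets $q^*(x)=\varphi(x,\ell)\leq\varphi(x,q^*(Tx))$. Iterating this sub-solution inequality gives, for every $n\geq 1$,
\begin{align*}
q^*(x)\leq\varphi^{(n)}\bigl(x,q^*(T^nx)\bigr)\leq\varphi^{(n)}(x,1-c_K)=\mathbb{E}\bigl[(1-c_K)^{Z_n(x)}\bigr].
\end{align*}
On the extinction event $(1-c_K)^{Z_n(x)}=1$ eventually, while on the survival event $Z_n(x)\to+\infty$ almost surely, a standard property of Galton-Watson processes in varying environments under (H\ref{hyp2})(b) (obtained from the uniform bound $\sup_x\mathbb{P}(Y_1^{(x)}+\cdots+Y_k^{(x)}=k)<1$ for each $k$ together with a Borel-Cantelli argument). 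Dominated convergence then gives $\varphi^{(n)}(x,1-c_K)\to q(x)$ pointwise, hence $q^*\leq q$, so $q^*=q$ and $q$ is continuous.

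The main obstacle is the uniform lower bound $\inf(1-q)>0$: because (H\ref{hyp2})(d) provides only first moments, a direct Paley-Zygmund argument is unavailable and one must truncate the offspring and verify that uniform supercriticality and uniform variance control coexist at the same truncation level. A secondary delicate point is the fact that $Z_n\to+\infty$ on non-extinction, which underpins the final dominated convergence step turning the pointwise limit $\varphi^{(n)}(x,1-c_K)\to q(x)$ into the desired inequality $q^*\leq q$.
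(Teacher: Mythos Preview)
Your argument is correct and takes a genuinely different route from the paper.

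The paper (via Lemma~\ref{lem8}) also starts from the semi-uniform ergodic theorem, but uses it to obtain $\partial_s\varphi^{(N)}(x,1)>1$ uniformly in $x$; uniform continuity of $\partial_s\varphi^{(N)}$ (which is where (H\ref{hyp2})(d) enters, through Proposition~\ref{prop2}) then yields $K<1$ with $\varphi^{(N)}(x,K)\leq K$ for every $x$. This single inequality makes $(\varphi^{(nN)}(\cdot,K))_{n}$ monotone decreasing, so $q=\inf_n\varphi^{(nN)}(\cdot,K)$ is upper semicontinuous as an infimum of continuous functions---no truncation, no second-moment estimate, no envelope. The bound $q\leq K$ and the convergence $\varphi^{(nN)}(\cdot,K)\to q$ (via \cite[Theorem~5]{MR0298780}, using $\sup_x\mu_x(1)<1$) fall out simultaneously. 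Your approach replaces this analytic monotonicity device by a probabilistic Paley--Zygmund bound on a truncated process (to obtain $q\leq 1-c$ uniformly) followed by an upper-envelope subsolution argument; both proofs ultimately rest on the same convergence $\varphi^{(n)}(\cdot,s)\to q(\cdot)$ for fixed $s<1$. The paper's route is shorter; yours makes the uniform survival bound quantitatively explicit and relies only on moment estimates rather than on regularity of $\partial_s\varphi$.

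One caveat on your final step: the Borel--Cantelli sketch as written only gives $\mathbb{P}(Z_n=k\text{ eventually})=0$ for each $k\geq 1$, whereas ruling out $\{Z_n=k\text{ i.o.}\}$ on survival requires in addition the (standard but nontrivial) almost-sure convergence of $Z_n$ in varying environments (Church, Lindvall). Alternatively you can bypass the a.s.\ statement entirely and cite \cite[Theorem~5]{MR0298780} directly for $\varphi^{(n)}(x,1-c_K)\to q(x)$, exactly as the paper does.
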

Theorem~\ref{thm3} is proved in Subsection~\ref{section4.1}.

\begin{remark}
  The condition \textquotedblleft There exists $\varepsilon>0$ such that the $(1+\varepsilon)$ moment of $(\mu_x)_{x\in\mathbb{X}}$ is uniformly bounded: $\underset{x\in\mathbb{X}}{\sup}~\underset{k=0}{\overset{+\infty}{\sum}}k^{1+\varepsilon}\mu_x(k)<+\infty$\textquotedblright~implies the condition (H\ref{hyp2})\ref{h1}. 

    Indeed, assume this condition.
    Let $\Tilde{\mu}$ the measure on $\mathbb{N}$ defined by:
    \begin{align*}
        \Tilde{\mu}(k)=\sup_{x\in\mathbb{X}}\mu_x([k,+\infty))-\sup_{x\in\mathbb{X}}\mu_x([k+1,+\infty))\geq 0 \quad \forall k \in\mathbb{N}.
    \end{align*}
    Let $k\in\mathbb{N}$. By Markov's inequality and because $\underset{x\in\mathbb{X}}{\sup}~\underset{k=0}{\overset{+\infty}{\sum}}k^{1+\varepsilon}\mu_x(k)<+\infty$,
    \begin{align*}
        \Tilde{\mu}([k,+\infty))&=\sup_{x\in\mathbb{X}}\mu_x([k,+\infty))-\lim_{i\to +\infty}\sup_{x\in\mathbb{X}}\mu_x([i,+\infty))\\
        &=\sup_{x\in\mathbb{X}}\mu_x([k,+\infty)).
    \end{align*}
    Moreover, $\Tilde{\mu}$ has a first moment:
    \begin{align*}
        \sum_{k=1}^{+\infty}k\Tilde{\mu}(k)&=\sum_{k=1}^{+\infty}\sup_{x\in\mathbb{X}}\mu_x([k,+\infty))\\
        &\leq \underset{x\in\mathbb{X}}{\sup}\underset{i=1}{\overset{+\infty}{\sum}}i^{1+\varepsilon}\mu_x(i) \sum_{k=1}^{+\infty} \frac{1}{k ^{1+\varepsilon}} \text{ by Markov's inequality,}
        \\&<+\infty.
    \end{align*}   
\end{remark}

\paragraph{Hölder continuity in the supercritical case}~\\
Theorem~\ref{thm2.6} gives an estimate on the Hölder regularity of $q$.

\begin{definition} The Lyapunov exponent of the transformation $T$ is defined as:
   \begin{align*} 
    \lambda_u&\defeq\lim_{n\to\infty}\frac{1}{n}\log(\lVert T^n\rVert_{Lip})\in  \mathbb{R}\cup\{-\infty,+\infty\}.
  \end{align*} 
  The Lyapunov exponent of the invariant graph $q$ in the fibre is defined as:
  \begin{align}
      \lambda_F&\defeq\lim_{n\to\infty}\frac{1}{n}\sup_{x\in\mathbb{X}}\log(\partial_s\varphi^{(n)}(x,q(T^n x))).\label{eq7}
  \end{align}
\end{definition}

The Lyapunov exponent $\lambda_u$ is well defined (and different from $+\infty$ when $T$ is Lipschitz) by Fekete's subadditive lemma because $\lVert .\rVert_{Lip}$ is an algebra norm and $\lambda_F$ is well defined by Lemma~\ref{lem10}. $\lambda_u$ controls the speed at which two orbits move apart. In opposition, $\lambda_F$ allows us to control the speed at which the probability of extinction at the $n$th generation converges towards the invariant graph $q$. The Lyapunov exponent of the invariant graph $\mathds{1}$ in the fibre is $\lambda_{\max}$.\medskip

\encad{Let $\alpha\in(0,1]$.
\begin{hyp}[H\ref{hyp3}($\alpha$)]\label{hyp3}~
\begin{enumerate}[label=\alph*)]
    \item $\mu_x\notin\{\delta_0,\delta_1\}$ for all $x\in\mathbb{X}$.
    \item For all $\nu\in\mathcal{E}_T(\mathbb{X})$, $\mathbb{E}_\nu[\log m(x)]>0$ (uniformly supercritical case).
    \item There exists $\Tilde{\mu}$ a positive measure on $\mathbb{N}$ which has a first moment such that for all $k\in\mathbb{N}$ and $x\in\mathbb{X}$, $
    \mu_x([k,+\infty))\leq \Tilde{\mu}([k,+\infty))$.
    \item $x\in\mathbb{X}\mapsto \mu_x$ is in $\mathcal{C}^{0,\alpha}(\mathbb{X})$ for the $\ell^1$ norm ($\alpha$-Hölder continuous).
    \item $T$ is Lipschitz.\label{h2}
    \item $\lambda_u\leq 0$ or $ \left(\lambda_u>0 \text{ and }\alpha<-\frac{\lambda_F}{\lambda_u}\right)$.\label{h3}
    \item $q(x)>0$ for all $x\in\mathbb{X}$.\label{h5}
\end{enumerate} 
\end{hyp}}\medskip

Let $\alpha\in(0,1]$. Then (H\ref{hyp3}($\alpha$)) $\implies$ (H\ref{hyp2}) $\implies$ (H\ref{hyp1}).

Under (H\ref{hyp3}($\alpha$)), $\lambda_F$ is negative (see Lemma~\ref{lem12}). H\ref{hyp3}($\alpha$)\ref{h5} is equivalent to \textquotedblleft$\{x\in\mathbb{X}:\mu_x(0)=0\}$ does not contain any $T$-invariant subsets\textquotedblright.

The ratio $-\frac{\lambda_F}{\lambda_u}$ compares the convergence speed towards the invariant graph and the separation of two orbits by $T$. This ratio controls the Hölder seminorm of $\varphi^{(n)}$. In the supercritical case, under some integrability assumptions, $q$ is continuous, but we need (H\ref{hyp3}($\alpha$))\ref{h2} and \ref{h3} to have the Hölder continuity. In \cite{MR1605989,MR1677161}, the author also uses a ratio of Lyapunov exponents to control the Hölder regularity, but this depends on the inverse of the transformation. In Lemma~\ref{lem12}, we prove that $\lambda_F$ is negative, the case where it is equal to zero was studied in \cite{MR3816739}.

\begin{theorem}\label{thm2.6}
   Let $\alpha\in(0,1]$. Under (H\ref{hyp3}($\alpha$)), $q$ is $\alpha$-Hölder continuous.
\end{theorem}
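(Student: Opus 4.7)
The plan is to control $|q(x)-q(y)|$ by iterating the fixed-point equation $q(x)=\varphi^{(n)}(x,q(T^nx))$, i.e. writing
\begin{align*}
q(x)-q(y)=\underbrace{\bigl[\varphi^{(n)}(x,q(T^nx))-\varphi^{(n)}(y,q(T^nx))\bigr]}_{A_n}+\underbrace{\bigl[\varphi^{(n)}(y,q(T^nx))-\varphi^{(n)}(y,q(T^ny))\bigr]}_{B_n},
\end{align*}
for every $n\in\mathbb{N}$. I would then show that $B_n\to 0$ and that $|A_n|\leq C\,d(x,y)^\alpha$ uniformly in $n$.

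Before the two estimates, I would establish two preliminary facts. First, by Theorem~\ref{thm3} the invariant graph $q$ is continuous, and since $N=\{q=1\}$ is closed and $T$-invariant, uniform supercriticality combined with a Kryloff--Bogoliouboff argument forces $N=\emptyset$, so $c\defeq\sup_{x\in\mathbb{X}}q(x)<1$. Second, a monotonicity argument using the chain rule $\partial_s\varphi^{(n)}(y,s)=\prod_{j=0}^{n-1}\partial_s\varphi(T^jy,\varphi^{(n-1-j)}(T^{j+1}y,s))$ together with the fact that $\partial_s\varphi$ and $\varphi^{(k)}$ are increasing in $s$ gives, for $s\leq c$,
\begin{align*}
\partial_s\varphi^{(n)}(y,s)\leq C\,e^{n(\lambda_F+\varepsilon)}
\end{align*}
for any $\varepsilon>0$, by comparison with the product along the orbit $q(T^{\cdot}y)$. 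This is where assumption (H\ref{hyp3}($\alpha$))\ref{h5} enters, as $q>0$ prevents the logarithms from being $-\infty$ (cf.\ Lemma~\ref{lem12}).

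For $B_n$, by the mean value theorem and convexity, $|B_n|\leq \sup_{s\in[0,c]}\partial_s\varphi^{(n)}(y,s)\cdot|q(T^nx)-q(T^ny)|\leq C\,e^{n(\lambda_F+\varepsilon)}$, which tends to $0$ since $\lambda_F<0$. For $A_n$, iterating the decomposition $\varphi^{(k+1)}(z,s)=\varphi(z,\varphi^{(k)}(Tz,s))$ via a telescoping triangle inequality yields
\begin{align*}
|A_n|\leq [\mu]_\alpha\sum_{k=0}^{n-1}P_k(y)\,d(T^kx,T^ky)^\alpha,
\end{align*}
where $P_k(y)$ is a partial product of $k$ factors of $\partial_s\varphi$ evaluated at intermediate points in $[0,c]$, hence bounded by $Ce^{k(\lambda_F+\varepsilon)}$ by the preliminary estimate. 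The Hölder constant $[\mu]_\alpha$ comes from $|\varphi(x,s)-\varphi(y,s)|\leq\|\mu_x-\mu_y\|_{\ell^1}$ uniformly in $s\in[0,1]$.

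To finish, I would plug in $d(T^kx,T^ky)\leq\|T^k\|_{\mathrm{Lip}}d(x,y)\leq Ce^{k(\lambda_u+\varepsilon)}d(x,y)$ (this is where assumption (H\ref{hyp3}($\alpha$))\ref{h2} is used) to obtain a geometric series
\begin{align*}
|A_n|\leq C'\,d(x,y)^\alpha\sum_{k=0}^{n-1}e^{k(\lambda_F+\alpha\lambda_u+(1+\alpha)\varepsilon)}.
\end{align*}
Hypothesis (H\ref{hyp3}($\alpha$))\ref{h3} guarantees $\lambda_F+\alpha\lambda_u<0$ (in the case $\lambda_u>0$) or the trivial bound $d(T^kx,T^ky)\leq d(x,y)$ (when $\lambda_u\leq 0$), so that for $\varepsilon$ small enough the series converges uniformly in $n$. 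Letting $n\to\infty$ then gives $|q(x)-q(y)|\leq Cd(x,y)^\alpha$. The main obstacle I anticipate is the uniform control of the intermediate derivative products $P_k(y)$: the points $s^*_{j+1}$ arising in the iterated mean value theorem do not lie on the canonical orbit $q(T^{j+1}y)$ but only in $[0,c]$, so one must carefully combine monotonicity of $\partial_s\varphi$ in $s$ with the strict gap $c<1$ to replace these values by something that can be compared to the definition of $\lambda_F$.
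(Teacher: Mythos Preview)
Your overall strategy—splitting $q(x)-q(y)=A_n+B_n$, killing $B_n$ with the fibre contraction, and summing a geometric series for $A_n$—is the right idea and is close in spirit to the paper. But the telescoping step as you wrote it has a genuine gap.

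You peel off the \emph{outermost} $\varphi$ via $\varphi^{(k+1)}(z,s)=\varphi(z,\varphi^{(k)}(Tz,s))$, which produces the product $P_k(y)=\prod_{j=0}^{k-1}\partial_s\varphi(T^jy,\xi_j)$. Two things go wrong. First, the $\xi_j$ are \emph{not} in $[0,c]$ as you assert: with $s=q(T^nx)$ one of the two endpoints is $v_{j+1}=\varphi^{(n-j-1)}(T^{j+1}y,q(T^nx))$, and since $c=\sup q$ is only a fixed point of $\varphi(z,\cdot)$ at the maximising $z$, one typically has $\varphi(z,c)>c$ for other $z$, hence $v_{j+1}>c$. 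Second, and more seriously, even if $\xi_j\le c'<1$, monotonicity only gives $P_k\le\prod_{j}\partial_s\varphi(T^jy,c')$, whose exponential rate is $\lambda_{c'}:=\sup_{\nu}\int\log\partial_s\varphi(\cdot,c')\,d\nu$. Since $\partial_s\varphi(x,c')>\partial_s\varphi(x,q(Tx))$, one has $\lambda_{c'}>\lambda_F$ strictly, and as $c'\uparrow1$ this tends to $\lambda_{\max}>0$; nothing in (H\ref{hyp3}($\alpha$)) rules out $\lambda_{c'}\ge0$. So the bound $P_k\le Ce^{k(\lambda_F+\varepsilon)}$ is unjustified, and your series need not converge. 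Your proposed remedy (``monotonicity plus the gap $c<1$'') yields precisely $\lambda_{c'}$, not $\lambda_F$.

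The paper avoids this by never forming a product of one-step derivatives: it bounds the $\alpha$-seminorm of $\varphi^{(nN)}(\cdot,0)$ recursively, showing $f\mapsto\varphi^{(N)}(\cdot,f(T^N\cdot))$ is an affine contraction on Hölder seminorms for $f$ valued in $[0,K]$. The contraction factor involves the \emph{block} derivative $\partial_s\varphi^{(N)}(y,s)$ for $s\le K$, which \emph{is} controlled by $\lambda_F$ via Proposition~\ref{prop4}. Your argument can be repaired by reversing the telescoping direction: use $\varphi^{(n)}=\varphi^{(n-1)}(\cdot,\varphi(T^{n-1}\cdot,s))$ instead. Then each step produces a single factor $\partial_s\varphi^{(k)}(y,\xi_k)$ with $\xi_k$ between $q(T^kx)$ and $\varphi(T^ky,q(T^{k+1}x))$, both bounded by some $c'<1$, and Proposition~\ref{prop4} applies directly to give $\partial_s\varphi^{(k)}(y,\xi_k)\le Ce^{k(\lambda_F+\varepsilon)}$.
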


Corollary~\ref{thmCN} asserts that the convergence of the extinction probability to the invariant graph is pointwise, by definition, but also Hölder.

\begin{corollary}\label{thmCN}
   Let $\alpha\in(0,1]$. Assume (H\ref{hyp3}($\alpha$)). The sequence of functions $(x\mapsto\varphi^{(n)}(x,0))_{n\in\mathbb{N}}$ converges in the $\beta$-Hölder norm to $q$ for all $0<\beta<\alpha$.
   \end{corollary}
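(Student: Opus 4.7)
My approach is to deduce this corollary from Theorem~\ref{thm2.6} via a standard interpolation between the uniform and $\alpha$-Hölder topologies. Setting $g_n \defeq q - \varphi^{(n)}(\cdot,0)$, the plan is to prove that $\|g_n\|_\infty \to 0$ and that $\sup_n [g_n]_\alpha < +\infty$ (where $[\cdot]_\alpha$ denotes the $\alpha$-Hölder seminorm), and then interpolate.

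For uniform convergence, I would note that $\varphi^{(n)}(\cdot,0)$ is continuous for each $n$ (by induction from the relation $\varphi^{(n+1)}(x,s)=\varphi(x,\varphi^{(n)}(Tx,s))$ in Proposition~\ref{prop5}), is non-decreasing in $n$, and converges pointwise to $q$. Since (H\ref{hyp3}($\alpha$)) implies (H\ref{hyp2}), Theorem~\ref{thm3} makes $q$ continuous on the compact space $\mathbb{X}$, so Dini's theorem yields uniform convergence. An exponential rate can actually be read off from the identity $q(x)=\varphi^{(n)}(x,q(T^n x))$ combined with the convexity bound $0 \leq q(x) - \varphi^{(n)}(x,0) \leq q(T^n x)\,\partial_s\varphi^{(n)}(x,q(T^n x))$ and $\lambda_F<0$, but only $\|g_n\|_\infty\to 0$ is needed here.

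For the uniform Hölder bound I would reuse the recursive estimate from the proof of Theorem~\ref{thm2.6}. Splitting
\begin{align*}
|\varphi^{(n+1)}(x,0)-\varphi^{(n+1)}(y,0)| &\leq |\varphi(x,\varphi^{(n)}(Tx,0))-\varphi(y,\varphi^{(n)}(Tx,0))| \\
&\quad + |\varphi(y,\varphi^{(n)}(Tx,0))-\varphi(y,\varphi^{(n)}(Ty,0))|,
\end{align*}
bounding the first term by $\|\mu_x-\mu_y\|_{\ell^1}$ (using the $\ell^1$ $\alpha$-Hölder continuity of $x\mapsto\mu_x$) and the second by the mean value theorem in $s$ (using that $\partial_s\varphi(y,\cdot)$ is non-decreasing), produces a recursion
\begin{align*}
[\varphi^{(n+1)}(\cdot,0)]_\alpha \leq K + A_n\,[\varphi^{(n)}(\cdot,0)]_\alpha, \qquad A_n \leq \lVert T\rVert_{Lip}^\alpha \sup_x\partial_s\varphi(x,\varphi^{(n)}(Tx,0)).
\end{align*}
Iterating, the $n$-fold product of the factors $A_k$ is a chain-rule expression for $\partial_s\varphi^{(n)}$ evaluated near $q(T^n\cdot)$, which by definition of $\lambda_F$ behaves like $e^{n\lambda_F+o(n)}$; multiplied by $\lVert T^n\rVert_{Lip}^\alpha = e^{n\alpha\lambda_u+o(n)}$, condition (H\ref{hyp3}($\alpha$))\ref{h3} (that is, $\alpha\lambda_u+\lambda_F<0$) makes the product summable and keeps $[\varphi^{(n)}(\cdot,0)]_\alpha$ uniformly bounded. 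Together with $[q]_\alpha<+\infty$ from Theorem~\ref{thm2.6}, this yields $\sup_n [g_n]_\alpha<+\infty$.

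Interpolation then finishes the proof. For any $\delta>0$, splitting according to whether $d(x,y)\leq\delta$ or $d(x,y)>\delta$ gives
\begin{align*}
[g_n]_\beta \;\leq\; \max\bigl(\sup_m[g_m]_\alpha\cdot\delta^{\alpha-\beta},\;\;2\|g_n\|_\infty\delta^{-\beta}\bigr),
\end{align*}
and the choice $\delta=\|g_n\|_\infty^{1/\alpha}$ yields $[g_n]_\beta = O\bigl(\|g_n\|_\infty^{(\alpha-\beta)/\alpha}\bigr)\to 0$. Combined with $\|g_n\|_\infty\to 0$, this is exactly convergence of $\varphi^{(n)}(\cdot,0)$ to $q$ in the $\beta$-Hölder norm. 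The main obstacle is the middle step: extracting a uniform-in-$n$ $\alpha$-Hölder bound. This is essentially built into the proof of Theorem~\ref{thm2.6} (the natural route to $\alpha$-Hölder continuity of $q$ is exactly to control the iterates and pass to the limit), but if it is not explicit one must carry out the recursion above by hand, and it is precisely there that the sharp condition $\alpha<-\lambda_F/\lambda_u$ becomes indispensable.
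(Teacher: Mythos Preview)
Your overall strategy---uniform convergence via Dini, a uniform $\alpha$-H\"older bound on the iterates, then interpolation---is exactly the paper's proof (which invokes Corollary~\ref{cor6}, the bound from Corollary~\ref{cor2.2} inside the proof of Theorem~\ref{thm2.6}, and the interpolation inequality of Proposition~\ref{prop2.3}). The uniform-convergence and interpolation steps are fine.

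The uniform H\"older bound, however, has a real gap as you wrote it. Your one-step recursion produces factors $A_k \leq \lVert T\rVert_{Lip}^\alpha \sup_x \partial_s\varphi(x,\varphi^{(k)}(Tx,0))$, but the product $\prod_{k<n} A_k$ is \emph{not} a chain-rule expression for $\partial_s\varphi^{(n)}$: each factor carries its own supremum over $x$, so the orbit decouples and you only get
\[
\frac{1}{n}\log\prod_{k<n}A_k \;\longrightarrow\; \alpha\log\lVert T\rVert_{Lip}+\sup_{x\in\mathbb{X}}F(x),
\]
not $\alpha\lambda_u+\lambda_F$. Since $\log\lVert T\rVert_{Lip}\geq\lambda_u$ and $\sup_x F(x)\geq\lambda_F$ (both possibly strictly), hypothesis (H\ref{hyp3}($\alpha$))\ref{h3} does not force this product to be bounded, and the recursion may blow up. The paper's remedy is to iterate in blocks of a fixed large size $N$: by Proposition~\ref{prop4} and the definition of $\lambda_u$, one can choose $N$ so that $\sup_{x}\partial_s\varphi^{(N)}(x,s)\leq e^{N(\lambda_F+\varepsilon)}$ for all $s\in[0,K]$ and $\lVert T^N\rVert_{Lip}^\alpha\leq e^{N\alpha(\lambda_u+\varepsilon)}$, giving a single contraction factor $A_N<1$. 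The geometric series then bounds $[\varphi^{(nN)}(\cdot,0)]_\alpha$ uniformly (Corollary~\ref{cor2.2}), and the crude one-step recursion of Lemma~\ref{lem2.4} fills in the remaining indices between consecutive multiples of $N$.
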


Theorem~\ref{thm2.6} and Corollary~\ref{thmCN} are proven in Sub-subsection~\ref{section4.2.2}.

\subsection{Relationship with skew products}\label{section1}

The cocycle relation \begin{align*} 
 \varphi^{(n)}(x,s) = \varphi(x,\varphi^{(n)}(Tx,s)).
    \end{align*} verified by the probability generating function does not allow us to define a skew product transformation. However, assuming that the transformation $T$ is invertible, we can reduce our model to a skew product model. We define $\Psi$, by: 
\fonction{\Psi}{\mathbb{X}\times [0,1]}{\mathbb{X}\times [0,1]}{(x,s)}{(T^{-1}x,\varphi(x,s))}
Then for all $x\in\mathbb{X}$, $n\in\mathbb{N}$, and $s\in[0,1]$, 
\begin{align}
    \Psi^n(x,s)=(T^{-n}x,\varphi^{(n)}(T^{-(n-1)}x,s)).\label{eq11}
\end{align}
Equation~(\ref{eq11}) provides a link with the model of Keller and Otani in \cite{MR3170606}. In their article, they consider $\Theta$ a two-dimensional compact Riemannian manifold, $T:\Theta\mapsto \Theta$ a topologically mixing $\mathcal{C}^2$-Anosov diffeomorphism and $g:\Theta\to(0,\infty)$ a Hölder continuous function. Moreover, for all $t\in\mathbb{R}$, they define a skew product transformation: 
\fonction{T_t}{\Theta\times \mathbb{R}^+}{\Theta\times \mathbb{R}^+}{(\theta,x)}{(T\theta,f_t(\theta,x))}
And they define a fibre function,
\fonctionv{f_t}{\Theta\times \mathbb{R}^+}{ \mathbb{R}^+}{(\theta,x)}{e^{-t}g(\theta)h(x)}
where $h\in\mathcal{C}^1(\mathbb{R}^+,\mathbb{R})$ is strictly concave with $h(0)=0$, $h'(0)=1$, and $\underset{x\to\infty}{\lim}\frac{h(x)}{x}=0$.
For $n\geq 2$, they define:
\fonction{f_t^n}{\Theta\times \mathbb{R}^+}{ \mathbb{R}^+}{(\theta,x)}{f_t(T^{n-1}\theta,f_t^{n-1}(\theta,x))}
For all $t\in\mathbb{R}$, there exists $M_t>0$ such that $f_t(\theta,M_t)<M_t$ for all $\theta\in\Theta$. Moreover, they define for $\theta\in\mathbb{R}$, 
\begin{align*}
    \varphi_t(\theta)\defeq\lim_{n\to\infty}f_t^n(T^{-n}\theta,M_t).
\end{align*}
Finally; they define $N_t=\{\theta\in \Theta:\varphi_t(\theta)=0\}$ for $t\in\mathbb{R}$.

We fix the parameter $t$ in \cite{MR3170606} to provide a link to our article. We can then give the following partial correspondence table: \begin{center}
   \begin{tabular}{|c|c|}
    \hline
    \cite{MR3170606} & this article \\
    \hline
    $\Theta, T^{-1}$ & $\mathbb{X},T$ \\
    $f_t$ & $\varphi$ \\
    $f_t^n$ & $\varphi^{(n)}$ \\
    $N_t$ & $N$\\
    $g_t$& $m$\\
    $\varphi_t$ & $q$\\
    \hline
 \end{tabular} .
\end{center}

The link between $\varphi_t$ and $q$ comes in particular from the fact that they are maximal (respectively minimal) solutions of the respective equations $f_t(\theta,\varphi(\theta))=\varphi(T\theta)$ and $q(x)=\varphi(x,q(Tx))$. This link gives the motivation to Theorem~\ref{thm4}. Our model doesn't need to assume that $T$ is invertible. We consider the dynamics of $T$, contrary to Keller and Otani, who study the dynamics of $T^{-1}$. In our model, see Theorem~\ref{thm5}, we only need an unstable direction.  Whereas the Keller and Otani model has a stable and an unstable direction, only the stable direction of $T$ is important for the dynamic (because the stable direction of $T$ is the unstable direction of $T^{-1}$). 

\begin{example}[Example~\ref{ex1}]\label{ex4}
    The model of Example~\ref{ex1} depends on a parameter $\lambda$, which can be identified with the parameter $-t$ in \cite{MR3170606}. 
We have the same property in both articles: $\log g_t=\log g -t$ and $\log m_\lambda=\log m +\lambda$. In both models, we try to observe the bifurcation of the Hausdorff dimension of $N_t$ (respectively $N_\lambda$) in $t$ (respectively in $\lambda$), see Subsection~\ref{s1}.
\end{example}

\section{Elementary properties}\label{section2}

We prove some elementary results on probability generating functions in the case of the model, on the invariant graph, and on the set of bad environments (defined in Sub-subsection~\ref{section3}).

\subsection{Probability generating functions}

We study the regularity properties of generating functions and the links between generating functions across generations.

\begin{proof}[Proof of Proposition~\ref{prop5}]
We will show that for all $x \in \mathbb{X}$, $n \in \mathbb{N}$, and $s \in [0,1]$,
\begin{align*}
    \varphi^{(n+1)}(x,s) = \varphi^{(n)}(x,\varphi(T^n x,s)).
\end{align*}
Let $x \in \mathbb{X}$, $n \in \mathbb{N}$, and $s \in [0,1]$. Using that $\{Y_{n,k}, k \in \mathbb{N}\}$ is i.i.d.\ and independent of $Z_n(x)$,
\begin{align*}
    \varphi^{(n+1)}(x,s)
    &= \mathbb{E}[s^{Z_{n+1}(x)}] \\
    &= \mathbb{E}[s^{\sum_{k=1}^{Z_n(x)}Y_{n,k}}] \\
    &= \mathbb{E}[\mathbb{E}[s^{\sum_{k=1}^{Z_n(x)}Y_{n,k}}|Z_n(x)]] \\
    &= \mathbb{E}[\mathbb{E}[s^{Y_{n,1}}]^{Z_n(x)}] \\
    &= \mathbb{E}[\varphi(T^n x,s)^{Z_n(x)}] \\
    &= \varphi^{(n)}(x,\varphi(T^n x,s)).
\end{align*}
The proposition follows by induction.
\end{proof}

\begin{corollary}\label{cor5}
    Under (H\ref{hyp1}), for all $n\in\mathbb{N}$, $\varphi^{(n)}$ is continuous on $\mathbb{X}\times[0,1]$.
\end{corollary}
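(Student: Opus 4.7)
The plan is to induct on $n$, using the cocycle identity from Proposition~\ref{prop5}. The base case $n = 0$ is immediate since $\varphi^{(0)}(x,s) = s$. For the inductive step, specialising Equation~(\ref{eq5}) at $k = 1$ gives
\[
\varphi^{(n+1)}(x,s) = \varphi\bigl(x,\,\varphi^{(n)}(Tx, s)\bigr).
\]
If $\varphi^{(n)}$ is continuous on $\mathbb{X} \times [0,1]$, then $(x,s) \mapsto (x, \varphi^{(n)}(Tx, s))$ is continuous (using continuity of $T$ from the standing assumptions on $(\mathbb{X},T)$), and post-composing with $\varphi$ yields a continuous function, provided $\varphi$ itself is continuous on $\mathbb{X} \times [0,1]$. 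So everything reduces to continuity of $\varphi$.

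To prove that $\varphi$ is continuous, I would fix $(x_0, s_0) \in \mathbb{X} \times [0,1]$ and split
\[
|\varphi(x,s) - \varphi(x_0, s_0)| \leq |\varphi(x,s) - \varphi(x_0, s)| + |\varphi(x_0, s) - \varphi(x_0, s_0)|.
\]
For the first term, the triangle inequality together with $s \in [0,1]$ gives
\[
|\varphi(x,s) - \varphi(x_0, s)| \leq \sum_{k=0}^{+\infty} |\mu_x(k) - \mu_{x_0}(k)|\, s^k \leq \|\mu_x - \mu_{x_0}\|_{\ell^1},
\]
which tends to zero as $x \to x_0$ by (H\ref{hyp1})a). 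For the second term, the series $\sum_k \mu_{x_0}(k)\, s^k$ converges uniformly on $[0,1]$ by the Weierstrass M-test with summable dominating sequence $(\mu_{x_0}(k))_k$ (of total mass $1$), so each partial sum being a polynomial in $s$, the function $s \mapsto \varphi(x_0, s)$ is continuous at $s_0$.

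The argument is elementary and no substantial obstacle arises. The only point that warrants a moment of care is the endpoint $s = 1$, where $\varphi(x_0, 1) = 1$ is attained as the sum of a conditionally convergent-looking series; but the M-test using the $\ell^1$ bound $\sum_k \mu_{x_0}(k) = 1$ handles the whole closed interval $[0,1]$ uniformly, so there is nothing delicate beyond bookkeeping.
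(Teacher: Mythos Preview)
Your proof is correct and follows essentially the same approach as the paper: both split $|\varphi(x,s)-\varphi(x_0,s_0)|$ into an $x$-variation term bounded by $\|\mu_x-\mu_{x_0}\|_{\ell^1}$ and an $s$-variation term handled by continuity of the power series on $[0,1]$, then lift to $\varphi^{(n)}$ by induction via Proposition~\ref{prop5}. The only cosmetic difference is that you make the continuity in $s$ explicit via the Weierstrass M-test, whereas the paper simply invokes it.
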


\begin{proof}
Let $x,y\in\mathbb{X}$ and $s,t\in[0,1]$. Then,
\begin{align*}
    |\varphi(x,s)-\varphi(y,t)| &\leq|\varphi(x,s)-\varphi(x,t)|+|\varphi(x,t)-\varphi(y,t)|\\
    &\leq|\varphi(x,s)-\varphi(x,t)|+ \sum_{k=0}^{+\infty}|\mu_x(k)-\mu_y(k)|t^k\\
    &\leq|\varphi(x,s)-\varphi(x,t)|+ \sum_{k=0}^{+\infty}|\mu_x(k)-\mu_y(k)|\text{ since }t\in[0,1]\\
    &=|\varphi(x,s)-\varphi(x,t)|+ \|\mu_x-\mu_y\|_{1}.
\end{align*}
Continuity of $\varphi$ follows from the continuity of $x\in\mathbb{X}\mapsto\mu_x$ and $s\in[0,1]\mapsto\varphi(x,s)$ for all $x\in\mathbb{X}$. 
For $n\in\mathbb{N}$, continuity of $\varphi^{(n)}$ follows by induction and Proposition~\ref{prop5}.
\end{proof}

 For any $x\in\mathbb{X}$, $s\mapsto\varphi(x,s)$ is $\mathcal{C}^\infty$ on $[0,1)$. By monotonicity, its derivative and second derivative are well-defined at one if we admit the value $+\infty$. We denote by $\partial_s\varphi$ (respectively $\partial^2_{s}\varphi$) the derivative (respectively the second derivative) of the function $\varphi$ with respect to the second variable. We use the convention $\log(0)=-\infty$ and $\log(+\infty)=+\infty$.

\begin{lemma}\label{lem3}
    For all $x\in\mathbb{X}$, $n,k\in\mathbb{N}$, and $s\in [0,1]$:
    \begin{align*}
        \log \partial_s \varphi^{(n+k)}(x,s)=\log \partial_s \varphi^{(k)}(T^nx,s)+\log \partial_s \varphi^{(n)}(x,\varphi^{(k)}(T^nx,s)).
    \end{align*}
\end{lemma}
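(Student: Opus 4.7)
The plan is to obtain the claim as a direct consequence of the cocycle relation of Proposition~\ref{prop5} combined with the chain rule, being careful about the boundary case $s=1$ where derivatives can be infinite.

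First I would apply Proposition~\ref{prop5} with the roles of $n$ and $k$ interchanged (which is legitimate since $n+k=k+n$), giving
\begin{align*}
\varphi^{(n+k)}(x,s)=\varphi^{(n)}(x,\varphi^{(k)}(T^n x,s)).
\end{align*}
This is exactly the form needed to put $\partial_s\varphi^{(n)}$ evaluated at the \emph{inner} generating function $\varphi^{(k)}(T^n x,s)$ and $\partial_s\varphi^{(k)}$ evaluated at $T^n x$, matching the two factors in the statement of Lemma~\ref{lem3}.

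Next, for $s\in[0,1)$ I would differentiate both sides with respect to $s$ using the chain rule; this is legal because, for any $y\in\mathbb{X}$, the map $s\mapsto\varphi^{(j)}(y,s)$ is $\mathcal{C}^\infty$ on $[0,1)$ and its values remain in $[0,1)$ as long as $s\in[0,1)$ (since $\varphi^{(j)}(y,1)=1$ and $\varphi^{(j)}(y,\cdot)$ is nondecreasing with $\mu_y\ne\delta_0$ will even give strict inequality, but strict inequality is not needed here, only that we stay in $[0,1)$, which follows from the fact that $\varphi^{(k)}(T^n x,s)<1$ whenever $s<1$). This yields
\begin{align*}
\partial_s\varphi^{(n+k)}(x,s)=\partial_s\varphi^{(n)}\!\bigl(x,\varphi^{(k)}(T^n x,s)\bigr)\cdot\partial_s\varphi^{(k)}(T^n x,s),
\end{align*}
and taking logarithms gives the formula for $s\in[0,1)$.

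Finally I would extend the identity to $s=1$ using the convention $\log 0=-\infty$, $\log(+\infty)=+\infty$. By monotone convergence of each factor (each $\partial_s\varphi^{(j)}(y,\cdot)$ is nondecreasing on $[0,1)$ and has a well-defined limit in $(0,+\infty]$ as $s\nearrow 1$, and $\varphi^{(k)}(T^n x,s)\nearrow 1$), both sides of the multiplicative identity pass to the limit in $(0,+\infty]$, so the logarithmic identity remains valid with the stated convention. The only mild obstacle is checking that this limiting procedure is unambiguous; this is guaranteed by the monotonicity of generating functions and their derivatives in $s$, so no finite/infinite cancellation can occur.
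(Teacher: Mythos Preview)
Your proof is correct and follows the same approach as the paper, which simply says ``We differentiate Equation~(\ref{eq5}) with respect to the second variable.'' Your version is more careful: you make explicit the swap of $n$ and $k$ needed to match the form of the statement, and you justify the boundary case $s=1$ via monotone limits, whereas the paper leaves both points implicit.
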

\begin{proof}
    We differentiate Equation (\ref{eq5}) with respect to the second variable.\qedhere
\end{proof}

\begin{corollary}\label{cor1}
    For all $x\in\mathbb{X}$, $s\in[0,1]$, and $n\in\mathbb{N}$, 
    \begin{align}
        \log \partial_s \varphi^{(n)}(x,s)&=\sum_{k=0}^{n-1}\log \partial_s \varphi(T^{k}x,\varphi^{(n-1-k)}(T^{k+1}x,s))\\
        &=\sum_{k=0}^{n-1}\log \partial_s \varphi(T^{n-k-1}x,\varphi^{(k)}(T^{n-k}x,s))\nonumber
        .
    \end{align}
\end{corollary}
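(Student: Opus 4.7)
The plan is a straightforward induction on $n\in\mathbb{N}$, iterating the splitting identity in Lemma~\ref{lem3}. For the base case $n=0$ both sides equal $0$ (the left side because $\varphi^{(0)}(x,s)=s$, the right side because the sum is empty), and for $n=1$ the single-term sum reduces to $\log\partial_s\varphi(x,\varphi^{(0)}(Tx,s))=\log\partial_s\varphi(x,s)$, which matches the left side by definition.

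For the inductive step, assuming the first identity at rank $n-1$ for every starting point, I would apply Lemma~\ref{lem3} with the pair $(n,k)$ replaced by $(1,n-1)$ to obtain
\begin{align*}
\log\partial_s\varphi^{(n)}(x,s)=\log\partial_s\varphi\bigl(x,\varphi^{(n-1)}(Tx,s)\bigr)+\log\partial_s\varphi^{(n-1)}(Tx,s).
\end{align*}
The induction hypothesis applied at the point $(Tx,s)$ then expands the second summand as
\begin{align*}
\log\partial_s\varphi^{(n-1)}(Tx,s)=\sum_{k=0}^{n-2}\log\partial_s\varphi\bigl(T^{k+1}x,\varphi^{(n-2-k)}(T^{k+2}x,s)\bigr).
\end{align*}
The shift $k\mapsto k+1$ converts this into the sum over $k=1,\dots,n-1$ of $\log\partial_s\varphi(T^k x,\varphi^{(n-1-k)}(T^{k+1}x,s))$, and adjoining $\log\partial_s\varphi(x,\varphi^{(n-1)}(Tx,s))$ as the missing $k=0$ contribution produces the first displayed formula. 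The second equality is obtained from the first by the bijective reindexing $k\mapsto n-1-k$ on $\{0,\dots,n-1\}$, which merely reverses the order of summation.

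There is no real obstacle beyond keeping track of the arguments. The only mild subtlety is that, under the stated conventions $\log 0=-\infty$ and $\log(+\infty)=+\infty$, the identity must be read in the extended reals when $s=1$ (where $\partial_s\varphi(\cdot,1)=m(\cdot)$ may be $+\infty$) or when an intermediate value $\varphi^{(n-1-k)}(T^{k+1}x,s)$ happens to vanish; Lemma~\ref{lem3} already handles these conventions, so the induction transports them without any issue.
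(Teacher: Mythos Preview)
Your proof is correct and follows essentially the same approach as the paper's own proof: both argue by induction on $n$ using the splitting identity of Lemma~\ref{lem3} applied with one of the two indices equal to $1$, and then obtain the other displayed formula by the reindexing $k\mapsto n-1-k$. The only cosmetic difference is that the paper inducts on the second formula and deduces the first, whereas you induct on the first formula and deduce the second.
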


\begin{proof}
    We prove the second equality by induction on $n\in\mathbb{N}$.
    For $n=0$, the result holds for all $x\in\mathbb{X}$ and $s\in[0,1]$.\\
    Assume that the result is true at rank $n\in\mathbb{N}$ for all $x\in\mathbb{X}$ and $s\in[0,1]$.
    Let $x\in\mathbb{X}$ and $s\in[0,1]$. 
    \begin{align*}
        \log \partial_s \varphi^{(n+1)}(x,s)
        &=\log\partial_s \varphi^{(n)}(Tx,s)+\log \partial_s \varphi (x,\varphi^{(n)}(Tx,s)) \text{ by Lemma~\ref{lem3},}\\
        &=\sum_{k=0}^{n-1}\log \partial_s \varphi(T^{n-k}x,\varphi^{(k)}(T^{n-k+1}x,s)) +\log\partial_s\varphi(x,\varphi^{(n)}(Tx,s))\\
        &=\sum_{k=0}^{n}\log \partial_s \varphi(T^{n+1-k-1}x,\varphi^{(k)}(T^{n+1-k}x,s)).
    \end{align*}
    This completes the induction. The first equality is obtained immediately by changing the index in the sum.
\end{proof}

Proposition~\ref{prop2} gives a sufficient condition for $\partial_s\varphi$ to be continuous.

\begin{prop}\label{prop2}
Assume (H\ref{hyp1}) and that there exists $\Tilde{\mu}$ a positive measure on $\mathbb{N}$ which has a first moment such that for all $k\in\mathbb{N}$ and $x\in\mathbb{X}$, \begin{align*}
    \mu_x([k,+\infty))\leq \Tilde{\mu}([k,+\infty)).
\end{align*} Then $\partial_s\varphi$ is continuous on $\mathbb{X}\times[0,1]$. In particular, $m$ is continuous on $\mathbb{X}$.
\end{prop}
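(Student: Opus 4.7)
The plan is to write $\partial_s\varphi$ as an explicit power series in $s$ with non-negative coefficients depending continuously on $x$, and then show this series converges uniformly on $\mathbb{X} \times [0,1]$; since each partial sum will be continuous, the sum will then be continuous as well. Concretely, I would first note that term-by-term differentiation of $\varphi(x,s) = \sum_{k=0}^{\infty} \mu_x(k) s^k$ is legitimate on $[0,1)$ by classical power-series theory, and extends to $s = 1$ by monotone convergence (allowing the value $+\infty$ a priori). This gives
\begin{align*}
    \partial_s\varphi(x,s) = \sum_{k=1}^{\infty} k\, \mu_x(k)\, s^{k-1}
\end{align*}
on all of $\mathbb{X} \times [0,1]$. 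Each partial sum $(x,s) \mapsto \sum_{k=1}^{N} k\, \mu_x(k)\, s^{k-1}$ is continuous on $\mathbb{X} \times [0,1]$: indeed, $|\mu_x(k) - \mu_y(k)| \leq \|\mu_x - \mu_y\|_1$ so $x \mapsto \mu_x(k)$ is continuous for each $k$ by (H\ref{hyp1})a), and $s \mapsto s^{k-1}$ is obviously continuous.

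The main step is to establish uniform convergence of the series on the whole of $\mathbb{X} \times [0,1]$. Since $s^{k-1} \leq 1$ there, it suffices to bound the tail $\sum_{k \geq N} k\, \mu_x(k)$ uniformly in $x$. To this end I would use the summation-by-parts identity
\begin{align*}
    \sum_{k=N}^{\infty} k\, \mu_x(k) = N\, \mu_x([N,\infty)) + \sum_{j=N+1}^{\infty} \mu_x([j,\infty)),
\end{align*}
combine it with the stochastic domination $\mu_x([k,\infty)) \leq \Tilde{\mu}([k,\infty))$ to obtain
\begin{align*}
    \sum_{k=N}^{\infty} k\, \mu_x(k) \leq N\, \Tilde{\mu}([N,\infty)) + \sum_{j=N+1}^{\infty} \Tilde{\mu}([j,\infty)) = \sum_{k=N}^{\infty} k\, \Tilde{\mu}(k),
\end{align*}
and observe that the right-hand side is independent of $x$ and tends to $0$ as $N \to \infty$, because $\Tilde{\mu}$ has a finite first moment.

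Uniform convergence of continuous partial sums then yields continuity of $\partial_s\varphi$ on all of $\mathbb{X} \times [0,1]$, and the continuity of $m(x) = \partial_s\varphi(x,1)$ follows by specialization. The only real subtlety is the uniform control at the boundary $s = 1$, where the power-series arguments valid on $[0,1)$ no longer suffice: the dominating measure $\Tilde{\mu}$ is introduced precisely to supply this uniform-in-$x$ tail bound. Away from this point, everything reduces to a routine interchange of sum and limit.
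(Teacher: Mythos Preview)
Your argument is correct. The key input---a uniform-in-$x$ tail bound $\sum_{k\geq N}k\,\mu_x(k)\leq\sum_{k\geq N}k\,\Tilde\mu(k)$ obtained from the stochastic domination via summation by parts---is exactly the same device the paper relies on. The packaging differs, though: the paper first splits
\[
|\partial_s\varphi(x,s)-\partial_s\varphi(y,t)|\leq|\partial_s\varphi(x,s)-\partial_s\varphi(x,t)|+\sum_{k\geq0}k\,|\mu_x(k)-\mu_y(k)|,
\]
handles the $s$-variation by one-variable power-series continuity, rewrites the second term as $\sum_{j\geq0}\sum_{i>j}|\mu_x(i)-\mu_y(i)|$, dominates the $j$-th inner sum by $2\,\Tilde\mu([j+1,\infty))$, and then invokes dominated convergence as $y\to x$. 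Your route---uniform convergence of the partial sums on the full product $\mathbb{X}\times[0,1]$ via a Weierstrass $M$-test---is a bit more direct: it treats both variables at once and makes the role of $\Tilde\mu$ at $s=1$ completely transparent, whereas the paper's split hides the fact that continuity of $s\mapsto\partial_s\varphi(x,s)$ at $s=1$ already needs the first-moment bound. Either way, the substance is the same uniform tail control.
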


\begin{proof}
Let $x,y\in\mathbb{X}$ and $s,t\in[0,1]$. Then,
\begin{align*}
    |\partial_s\varphi(x,s)-\partial_s\varphi(y,t)| &\leq|\partial_s\varphi(x,s)-\partial_s\varphi(x,t)|+|\partial_s\varphi(x,t)-\partial_s\varphi(y,t)|\\
    &\leq|\partial_s\varphi(x,s)-\partial_s\varphi(x,t)|+ \sum_{k=0}^{+\infty}k|\mu_x(k)-\mu_y(k)|t^{k-1}\\
    &\leq|\partial_s\varphi(x,s)-\partial_s\varphi(x,t)|+ \sum_{k=0}^{+\infty}k|\mu_x(k)-\mu_y(k)|.
\end{align*}
Moreover,
\begin{align*}
    \sum_{k=0}^{+\infty}k|\mu_x(k)-\mu_y(k)|&=\sum_{k=0}^{+\infty}\sum_{i=k+1}^{+\infty}|\mu_x(i)-\mu_y(i)|\\
    &\leq \sum_{k=0}^{+\infty} \left[\mu_x([k+1,+\infty))+\mu_y([k+1,+\infty))\right]\\
    &\leq 2 \sum_{k=0}^{+\infty} \Tilde{\mu}([k+1,+\infty))\\
    &<+\infty.
\end{align*}
By the dominated convergence theorem, continuity follows from the continuity of $x\in\mathbb{X}\mapsto\mu_x$ and for all $x\in\mathbb{X}$, $s\in[0,1]\mapsto\partial_s\varphi(x,s)$.
\end{proof}

\begin{example}[Example~\ref{ex1}]

We can explicitly compute the probability generating function. Let $\lambda\in\mathbb{R}$.
Then, for all $x\in\RZ$ and $s\in[0,1]$, \begin{align}
    \varphi_\lambda(x,s)&=\exp{ \left(e^{\lambda-\cos{2\pi x}}(s-1)\right) },\nonumber\\
    \partial_s\varphi_\lambda(x,s)&=e^{\lambda-\cos{2\pi x}}\varphi_\lambda(x,s),\label{eq6}
\\
\log m_\lambda(x)&=\lambda-\cos{2\pi x}.\nonumber\end{align} 
The product form of the derivative (\ref{eq6}) is specific to Poisson distributions and provides a property similar to \cite{MR3170606} (see Example~\ref{ex4}).
\end{example}

\subsection{The invariant graph}

We study the probability of extinction $q$ and justify the term invariant graph. Lemma~\ref{lem2} is a stronger version of Proposition~\ref{prop1}.

\begin{lemma}\label{lem2}
Assume (H\ref{hyp1}).
\begin{itemize}
    \item $q(x)=\varphi^{(k)}(x,q(T^kx))$ for all $x\in\mathbb{X}$ and $k\in\mathbb{N}$,
    \item $q$ is lower semi-continuous.
\end{itemize}
\end{lemma}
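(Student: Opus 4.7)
The plan is to derive both assertions directly from the cocycle relation of Proposition~\ref{prop5} combined with the continuity statement of Corollary~\ref{cor5}. Nothing new needs to be introduced; the argument is essentially a passage to the limit along the defining sequence $\varphi^{(n)}(\cdot,0) \nearrow q$.

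For the functional equation, I would fix $x\in\mathbb{X}$ and $k\in\mathbb{N}$, and plug $s=0$ into Proposition~\ref{prop5} to get
\begin{align*}
\varphi^{(n+k)}(x,0) \;=\; \varphi^{(k)}\!\bigl(x,\varphi^{(n)}(T^k x,0)\bigr)
\end{align*}
for every $n\in\mathbb{N}$. As $n\to\infty$, the left-hand side converges monotonically to $q(x)$ by definition. On the right-hand side, $\varphi^{(n)}(T^kx,0)\nearrow q(T^kx)$, so it remains to pass the limit through $\varphi^{(k)}(x,\cdot)$. For fixed $x$, the map $s\mapsto \varphi^{(k)}(x,s)=\sum_{j\ge 0}\mu_x^{(k)}(j)\,s^j$ is a probability generating function, hence continuous on $[0,1]$ (one can invoke dominated convergence against the total mass $1$, or simply note that $\varphi^{(k)}(x,\cdot)$ is continuous on $\mathbb{X}\times[0,1]$ by Corollary~\ref{cor5}). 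Therefore
\begin{align*}
q(x)=\lim_{n\to\infty}\varphi^{(n+k)}(x,0)=\varphi^{(k)}\!\bigl(x,\,\lim_{n\to\infty}\varphi^{(n)}(T^kx,0)\bigr)=\varphi^{(k)}(x,q(T^kx)),
\end{align*}
which is the claimed identity.

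For lower semi-continuity, I would use that, still by Corollary~\ref{cor5}, each function $f_n:x\mapsto\varphi^{(n)}(x,0)$ is continuous on $\mathbb{X}$, and that the sequence $(f_n)_{n\in\mathbb{N}}$ is pointwise non-decreasing with pointwise limit $q$. Hence $q=\sup_n f_n$, and the supremum of a family of continuous functions is lower semi-continuous (for any $c\in\mathbb{R}$, the set $\{q>c\}=\bigcup_n\{f_n>c\}$ is a union of open sets, hence open). This yields lower semi-continuity of $q$ with no further argument needed.

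I do not anticipate any real obstacle: the only delicate point is the interchange of the limit with $\varphi^{(k)}(x,\cdot)$ at the boundary value $s=1$ in case $q(T^kx)=1$, but this is handled by the standard fact that a probability generating function is continuous on the closed interval $[0,1]$. No growth or integrability hypothesis beyond~(H\ref{hyp1}) is required for either statement.
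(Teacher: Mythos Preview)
Your proposal is correct and follows essentially the same argument as the paper: apply Proposition~\ref{prop5} with $s=0$, pass the limit through $\varphi^{(k)}(x,\cdot)$ using its continuity (Corollary~\ref{cor5}), and deduce lower semi-continuity from $q=\sup_n \varphi^{(n)}(\cdot,0)$ with each term continuous. The only cosmetic addition you make is spelling out why the supremum of continuous functions is l.s.c.\ and why the interchange at $s=1$ is harmless, both of which the paper leaves implicit.
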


\begin{proof}~
\begin{itemize}
    \item Let $x\in\mathbb{X}$ and $k\in\mathbb{N}$.
    \begin{align*}
        q(x)&=\lim_{n\to\infty}\varphi^{(n+k)}(x,0)\\
        &=\lim_{n\to\infty}\varphi^{(k)}(x,\varphi^{(n)}(T^kx,0))\text{ by Corollary~\ref{prop5}} \\
        &=\varphi^{(k)}(x,\lim_{n\to\infty}\varphi^{(n)}(T^kx,0)) \text{ by continuity of }\varphi^{(k)} \\
        &=\varphi^{(k)}(x,q(T^kx)).
    \end{align*}
    \item Since $q(x)=\underset{n\to\infty}{\lim}\nearrow \varphi^{(n)}(x,0)=\underset{n\in\mathbb{N}}{\sup}~\varphi^{(n)}(x,0)$, $q$ is lower semi-continuous as a supremum of continuous functions.\qedhere
\end{itemize}
\end{proof}

We measure the convergence speed towards the invariant graph with the function $F$.

\begin{definition}\label{def1}
  We define \( F \) for all \( x\in\mathbb{X} \) by:
\begin{align*}
    F(x)\defeq&\log \partial_s\varphi(x,q(Tx)).
\end{align*}
\end{definition}

Lemma~\ref{lem7} measures the convergence speed to the invariant graph as a Birkhoff sum.
\begin{lemma}\label{lem7}
Assume (H\ref{hyp1}), for all \( x\in\mathbb{X} \) and \( n\in\mathbb{N} \):
    \begin{align}\label{eq4}
        \log \partial_s \varphi^{(n)}(x,q(T^nx))=\sum_{k=0}^{n-1}F(T^kx).
    \end{align}
\end{lemma}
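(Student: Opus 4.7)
The plan is to combine the cocycle identity for the derivative (Corollary~\ref{cor1}) with the invariance property of $q$ established in Lemma~\ref{lem2}. Essentially, each intermediate point at which we are evaluating $\partial_s\varphi$ will collapse, thanks to $q$ being an invariant graph, to $(T^k x, q(T^{k+1}x))$, which is exactly the expression appearing in $F(T^k x)$.

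More precisely, fix $x \in \mathbb{X}$ and $n \in \mathbb{N}$. By Corollary~\ref{cor1} applied at the point $s = q(T^n x)$,
\begin{align*}
\log \partial_s \varphi^{(n)}(x, q(T^n x)) = \sum_{k=0}^{n-1}\log \partial_s \varphi\bigl(T^k x,\, \varphi^{(n-1-k)}(T^{k+1}x, q(T^n x))\bigr).
\end{align*}
The first step is to simplify the inner argument of $\varphi$. Apply Lemma~\ref{lem2} at the point $T^{k+1}x$ with iteration index $n-1-k$: since $T^{n-1-k}(T^{k+1}x) = T^n x$, this yields
\begin{align*}
\varphi^{(n-1-k)}(T^{k+1}x, q(T^n x)) = q(T^{k+1}x).
\end{align*}

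Substituting this identity into the sum, the summand becomes $\log \partial_s \varphi(T^k x, q(T^{k+1}x)) = F(T^k x)$ by Definition~\ref{def1}, which gives exactly Equation~(\ref{eq4}). The only ingredient that requires some care is verifying the edge case $n = 0$, where both sides are $\log 1 = 0$ (since $\varphi^{(0)}(x,s) = s$ implies $\partial_s \varphi^{(0)} \equiv 1$, and the empty Birkhoff sum is zero); no real obstacle arises. The main conceptual point is simply recognizing that the telescoping structure of $\partial_s \varphi^{(n)}$ along an orbit, combined with the invariant graph equation $q(y) = \varphi^{(j)}(y, q(T^j y))$, lets us evaluate every intermediate derivative at a point of the graph of $q$, turning the chain rule into a Birkhoff sum of the fibre contraction rate $F$.
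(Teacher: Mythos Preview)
Your proof is correct and follows exactly the same approach as the paper: apply Corollary~\ref{cor1} at $s=q(T^n x)$, then use Lemma~\ref{lem2} to reduce each $\varphi^{(n-1-k)}(T^{k+1}x,q(T^n x))$ to $q(T^{k+1}x)$, yielding $F(T^k x)$ in each summand. Your explicit treatment of the $n=0$ case is a small addition not spelled out in the paper, but otherwise the arguments coincide.
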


\begin{proof}
    Let \( x\in\mathbb{X} \) and \( n\in\mathbb{N} \). Apply Corollary~\ref{cor1} with \( s=q(T^nx) \). We obtain:
    \begin{align*}
     \log \partial_s \varphi^{(n)}(x,q(T^nx))&=\sum_{k=0}^{n-1}\log \partial_s \varphi(T^{k}x,\varphi^{(n-1-k)}(T^{k+1}x,q(T^nx)))\\
     &=\sum_{k=0}^{n-1}\log \partial_s \varphi(T^{k}x,q(T^{k+1}x))\text{ by Lemma~\ref{lem2}}.\qedhere
    \end{align*}
\end{proof}

\begin{example}[Example~\ref{ex1}] By Lemma~\ref{lem2}, $q_\lambda$ is the smallest solution of the functional equation: \begin{align*}
    q_\lambda(x)=\exp{\big(e^{\lambda-\cos{2\pi x}}(q_\lambda(Tx)-1)}\big) \text{ for all }x\in\RZ.
\end{align*}
    \begin{figure}[!ht]
    \center
\includegraphics[scale=0.85]{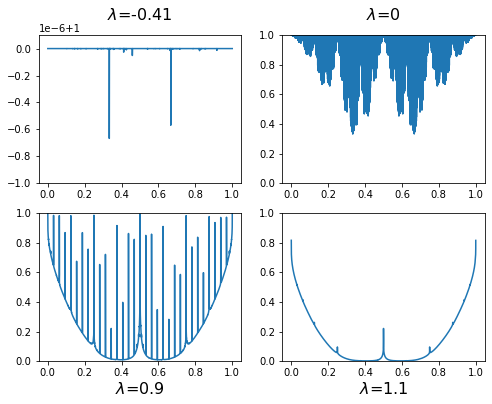}
\captionof{figure}{Plot of the probability of extinction $q_\lambda$ for some $\lambda\in\mathbb{R}$.}
\label{figure2}
\end{figure}
\begin{remark}
    The scale of the first graph is different from the others.
\end{remark}

\end{example}

\subsection{The set of bad environments}

We study the set of bad environments, a $T$ invariant and $G_\delta$ set.

\begin{prop}\label{cor2}
Assume (H\ref{hyp1}). Then $N=T^{-1}N$. In particular if $\nu\in\mathcal{E}_T(\mathbb{X})$, then $\nu(N)\in\{0,1\}$.
\end{prop}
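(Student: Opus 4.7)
The plan is to establish the set equality $N = T^{-1}N$ directly from the functional equation $q(x) = \varphi(x, q(Tx))$ given by Proposition~\ref{prop1}, treating the two inclusions separately, and then invoke the definition of ergodicity.

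First I would verify the inclusion $T^{-1}N \subseteq N$. If $x \in T^{-1}N$, then $q(Tx) = 1$, so by Proposition~\ref{prop1} and the identity $\varphi(x,1) = \sum_{k} \mu_x(k) = 1$, we get $q(x) = \varphi(x,1) = 1$, i.e.\ $x \in N$. This direction uses no hypothesis beyond $\mu_x$ being a probability measure.

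For the reverse inclusion $N \subseteq T^{-1}N$, the key point is that hypothesis (H\ref{hyp1}) b) forces $\varphi(x,\cdot)$ to stay strictly below $1$ on $[0,1)$. Indeed, since $\mu_x \neq \delta_0$, there exists $k \geq 1$ with $\mu_x(k) > 0$, so for every $s \in [0,1)$
\begin{align*}
\varphi(x,s) \leq \mu_x(0) + s \sum_{k \geq 1} \mu_x(k) < \mu_x(0) + \sum_{k \geq 1} \mu_x(k) = 1.
\end{align*}
Thus if $x \in N$, the equality $1 = q(x) = \varphi(x, q(Tx))$ with $q(Tx) \in [0,1]$ forces $q(Tx) = 1$, i.e.\ $Tx \in N$, so $x \in T^{-1}N$. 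Combining the two inclusions gives $N = T^{-1}N$.

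For the second statement, I first note that $N$ is measurable: because $q$ is lower semi-continuous by Lemma~\ref{lem2} and bounded above by $1$, one has $N = \{q \geq 1\} = \bigcap_{n \geq 1}\{q > 1 - 1/n\}$, a $G_\delta$ set. Since $N$ is then a measurable $T$-invariant set, the defining property of a $T$-ergodic measure $\nu$ yields $\nu(N) \in \{0,1\}$. The only subtle step is the reverse inclusion, where one must be careful to exploit the strict inequality $\varphi(x,s) < 1$ for $s < 1$; this is precisely where (H\ref{hyp1}) b) is used, and without it the inclusion could fail (e.g.\ if $\mu_x = \delta_0$ at some point, then $q(x) = 1$ regardless of $q(Tx)$).
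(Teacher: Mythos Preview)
Your proof is correct and follows essentially the same route as the paper's: both use the functional equation $q(x)=\varphi(x,q(Tx))$ together with the observation that, since $\mu_x\neq\delta_0$, the equality $\varphi(x,s)=1$ forces $s=1$. The paper compresses the two inclusions into a single chain of set equalities, while you spell them out separately and add an explicit remark on the measurability of $N$; the arguments are otherwise identical.
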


\begin{proof} Let $\nu\in\mathcal{E}_T(\mathbb{X})$.
\begin{align*} N&=\{x\in\mathbb{X}:q(x)=1\}.\\
    &=\{x\in\mathbb{X}:\varphi(x,q(Tx))=1\} \text{ by Lemma~\ref{lem2}.}\\
    &=\{x\in\mathbb{X}:q(Tx)=1\} \text{ because for all }x\in\mathbb{X},\,\mu_x\ne\delta_0\\
    &=T^{-1}N.
\end{align*}
As $\nu\in\mathcal{E}_T(\mathbb{X})$ and $N$ is $T$-invariant, $\nu(N)\in\{0,1\}$.
\end{proof}

Lower semi-continuity of $q$ allows us to show that $N$ is a $G_\delta$ set.

\begin{lemma}\label{lm5}
Assume (H\ref{hyp1}). Then $N$ is a $G_\delta$ set.
\end{lemma}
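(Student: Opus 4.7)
The plan is to leverage the lower semi-continuity of $q$ already established in Lemma~\ref{lem2}. First I would observe that $q$ takes values in $[0,1]$: indeed, by definition $q(x) = \lim_{n\to\infty} \varphi^{(n)}(x,0)$ and each $\varphi^{(n)}(x,0)$ lies in $[0,1]$ as the value at $0$ of a probability generating function. Consequently the set of bad environments can be rewritten as
\begin{align*}
N = \{x \in \mathbb{X} : q(x) = 1\} = \{x \in \mathbb{X} : q(x) \geq 1\}.
\end{align*}

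Next I would invoke the standard identity
\begin{align*}
\{x \in \mathbb{X} : q(x) \geq 1\} = \bigcap_{n \geq 1} \left\{x \in \mathbb{X} : q(x) > 1 - \tfrac{1}{n}\right\}.
\end{align*}
Since $q$ is lower semi-continuous, each sublevel-complement $\{q > 1 - 1/n\}$ is open in $\mathbb{X}$. Therefore $N$ is exhibited as a countable intersection of open sets, which is precisely the definition of a $G_\delta$ set. This concludes the proof.

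There is no real obstacle here: the entire argument is a short consequence of the already-proved lower semi-continuity of $q$ together with the trivial upper bound $q \leq 1$. The only thing to be careful about is to record the upper bound $q\leq 1$ explicitly so that $\{q=1\}$ and $\{q\geq 1\}$ coincide; without this observation, lower semi-continuity alone would only tell us that $\{q > c\}$ is open, not directly that $\{q=1\}$ is $G_\delta$.
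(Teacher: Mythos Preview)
Your proof is correct and follows essentially the same approach as the paper: both use the lower semi-continuity of $q$ from Lemma~\ref{lem2} together with $q\leq 1$ to write $N=\bigcap_{n\geq 1}\{q>1-1/n\}$ as a countable intersection of open sets. The only cosmetic difference is your explicit intermediate step $\{q=1\}=\{q\geq 1\}$, which the paper absorbs into the remark ``since $q\leq 1$''.
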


\begin{proof}
For any $k\in\mathbb{N}$, the set ${\{x\in\mathbb{X}: q(x)>1-\frac{1}{k}\}}$ is open since $q$ is lower semi-continuous. Moreover,
\begin{align*}
  N&=\{x\in\mathbb{X}:q(x)=1\} \\
  &= \bigcap_{k\in\mathbb{N}}{\left\{x\in\mathbb{X}:q(x)>1-\frac{1}{k}\right\}} \text{ since }q\leq 1.\qedhere
\end{align*}
\end{proof}

\begin{example}[Example~\ref{ex1}]\label{ex2}

Assume (H\ref{hyp1}) and that $T$ is a expanding map of $\RZ$, then $N$ is either empty or $N$ is a dense $G_\delta$ set.
Indeed, if $N$ is non-empty, then there exists $x\in\RZ$ such that $x\in N_\lambda$.
By Proposition~\ref{cor2}, $T^{-1}N\subset N$, so \begin{align*}
     \underset{n\in\mathbb{N}}{\bigcup}T^{-n}(x)\subset\underset{n\in\mathbb{N}}{\bigcup}T^{-n}N\subset N.
\end{align*}
And $\underset{n\in\mathbb{N}}{\bigcup}T^{-n}(x)$ is dense in $\RZ$ as $T$ is expanding.

In particular, in the case of Example~\ref{ex1}, for all $\lambda\in\mathbb{R}$, $N_\lambda$ is either empty or $N_\lambda$ is a dense $G_\delta$ set.
\end{example}

\section{The set of bad environments}

In this section, we study the metric and topological properties of the set of bad environments.

\subsection{Measure of the set of bad environments}\label{section3.1}

It is sometimes difficult to say anything about the process $(Z_n(x))_{n\in\mathbb{N}}$ for all $x\in\mathbb{X}$. We can then use the point of view of ergodic theory by choosing $x$ according to an invariant (and even ergodic without losing generality) measure. In this case, the dynamic environment $(\mu_{T^nx})_{n\in\mathbb{N}}$ is a stationary and ergodic process that allows us to use Athreya and Karlin's results \cite{MR0298780}. They obtained a criterion to determine whether the measure of the set $N$ is zero or full when considering an ergodic measure.

We consider $\nu\in\mathcal{E}_T(\mathbb{X})$. As seen in Proposition~\ref{cor2}, $\nu(N)\in\{0,1\}$. We must determine whether $\nu(N)$ is 0 or 1. Theorem~\ref{thmAK}, Corollary~\ref{cor4}, and Theorem~\ref{thm2} are three results of Athreya and Karlin presented in the context of our model.

\begin{theorem}\cite[Theorem 1]{MR0298780}\label{thmAK}
If $\nu(N)=0$ and $\mathbb{E}_\nu[\log m(x)]^+<\infty$, then:
\begin{itemize}
    \item $\mathbb{E}_\nu|\log m(x)|<\infty$ and $\mathbb{E}_\nu[\log m(x)]>0$.
    \item $\mathbb{E}_\nu\left|\log\frac{1-q(x)}{1-q(Tx)}\right|<\infty$ and $\mathbb{E}_\nu\left[\log\frac{1-q(x)}{1-q(Tx)}\right]=0$.
\end{itemize}
\end{theorem}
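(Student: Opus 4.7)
The proof will rest on two convexity facts for the fiber generating function $\varphi(x,\cdot)$. Since $\varphi(x,\cdot)$ is convex on $[0,1]$ with $\varphi(x,1)=1$, the chord-slope bound
\[
\frac{1-\varphi(x,s)}{1-s}\leq\partial_s\varphi(x,1)=m(x),\qquad s\in[0,1),
\]
evaluated at $s=q(Tx)$ and combined with the functional equation $q(x)=\varphi(x,q(Tx))$ of Proposition~\ref{prop1}, gives the pointwise inequality
\[
g(x)\defeq\log\frac{1-q(x)}{1-q(Tx)}\leq\log m(x),\qquad\nu\text{-a.e.},
\]
which is meaningful since $\nu(N)=0$ forces $q<1$ $\nu$-a.s.

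The central observation is that $g$ telescopes: $\sum_{k=0}^{n-1}g(T^kx)=\log(1-q(x))-\log(1-q(T^nx))$. Because $g^+\leq(\log m)^+\in L^1(\nu)$ by hypothesis, the generalized (one-sided) Birkhoff ergodic theorem gives $\frac{1}{n}\sum_{k=0}^{n-1}g(T^kx)\to\mathbb{E}_\nu g\in[-\infty,+\infty)$ $\nu$-a.s. Combined with the telescoping identity, the left side equals $\frac{1}{n}\log(1-q(x))-\frac{1}{n}\log(1-q(T^nx))$, whose first term tends to $0$ a.s.\ (finite divided by $n$) and whose second term is $\geq 0$ (since $q\leq 1$), forcing $\mathbb{E}_\nu g\geq0$. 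For the matching upper bound I invoke Poincaré recurrence: since $\nu(q<1)=1$ one can pick $\epsilon>0$ with $\nu(\{q\leq 1-\epsilon\})>0$, and Birkhoff then guarantees a random subsequence $n_j\uparrow\infty$ with $q(T^{n_j}x)\leq1-\epsilon$, whence $\log(1-q(T^{n_j}x))\geq\log\epsilon$ and the averages along $n_j$ are bounded above by $\frac{1}{n_j}(\log(1-q(x))-\log\epsilon)\to 0$. Hence $\mathbb{E}_\nu g\leq 0$, and combining $\mathbb{E}_\nu g=0$; this limit being finite also forces $g\in L^1(\nu)$, proving the second bullet.

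For the first bullet, the pointwise comparison $g\leq\log m$ gives $g^-\geq(\log m)^-$, so $(\log m)^-\in L^1(\nu)$; together with the hypothesis this yields $\log m\in L^1(\nu)$ with $\mathbb{E}_\nu\log m\geq\mathbb{E}_\nu g=0$. The strict inequality $\mathbb{E}_\nu\log m>0$ follows from strict convexity of $\varphi(x,\cdot)$: whenever $\mu_x(\{0,1\})<1$ and $q(Tx)<1$ the chord bound is strict, so $g<\log m$ on a set of positive $\nu$-measure, provided $\nu(\{x:\mu_x(\{0,1\})<1\})>0$ (a non-degeneracy implicit in the supercritical setup, since $\mu_x\equiv\delta_1$ $\nu$-a.s.\ would trivially give $q\equiv 0$ and $\log m\equiv 0$).

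The main obstacle I anticipate is controlling $\mathbb{E}_\nu g$ from both sides simultaneously: a priori the generalized Birkhoff theorem only yields a limit in $[-\infty,+\infty)$, and without the recurrence argument producing a subsequence along which $\log(1-q(T^nx))$ stays bounded, one could not rule out $\mathbb{E}_\nu g=-\infty$. The interplay between telescoping, one-sided Birkhoff, and Poincaré recurrence is the crux of the argument and the step requiring the most care.
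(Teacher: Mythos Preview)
The paper does not prove this theorem; it is quoted verbatim as \cite[Theorem~1]{MR0298780} and used as a black box (together with the cited Corollary~1 and Theorem~3) to derive the paper's Theorem~\ref{thm1}. There is therefore no proof in the paper to compare against.

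Your reconstruction is correct and is essentially the classical Athreya--Karlin argument. The chord-slope bound from convexity of $\varphi(x,\cdot)$ gives $g(x)\defeq\log\frac{1-q(x)}{1-q(Tx)}\leq\log m(x)$ pointwise on $\{q<1\}$; the telescoping identity $\sum_{k=0}^{n-1}g(T^kx)=\log(1-q(x))-\log(1-q(T^nx))$ is exact; and the one-sided Birkhoff theorem applied to $g$ (licensed by $g^+\leq(\log m)^+\in L^1(\nu)$) combined with the Poincar\'e/Birkhoff recurrence argument along a subsequence with $q(T^{n_j}x)\leq 1-\epsilon$ correctly pins $\mathbb{E}_\nu g=0$ and hence $g\in L^1(\nu)$. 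The deduction $(\log m)^-\leq g^-\in L^1(\nu)$ and $\mathbb{E}_\nu[\log m]\geq\mathbb{E}_\nu g=0$ then follows.

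The only point worth flagging is the strict inequality $\mathbb{E}_\nu[\log m]>0$. Your argument for strictness needs $\nu(\{x:\mu_x(\{0,1\})<1\})>0$, and you are right that the degenerate case $\mu_x=\delta_1$ $\nu$-a.s.\ would give $\mathbb{E}_\nu[\log m]=0$ while still satisfying $\nu(N)=0$. This is indeed a standing non-degeneracy assumption in Athreya--Karlin's setup (and is subsumed, in the present paper's later hypotheses, by conditions such as (H\ref{hyp2})b or (H\ref{hyp5}).2), so your treatment of it as an implicit side condition is appropriate.
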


By contrapositive, we deduce Corollary~\ref{cor4}:

\begin{corollary}\label{cor4}\cite[Corollary 1]{MR0298780}
If $\mathbb{E}_\nu[\log m(x)]^+<\infty$ and \\ $\mathbb{E}_\nu[\log m(x)]^+\leq \mathbb{E}_\nu[\log m(x)]^-\leq\infty$, then $\nu(N)=1$.
\end{corollary}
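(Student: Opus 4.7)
The plan is to derive this corollary as the direct contrapositive of the preceding theorem of Athreya and Karlin, combined with the zero-one dichotomy from Proposition~\ref{cor2}. Since $\nu\in\mathcal{E}_T(\mathbb{X})$ and $N$ is $T$-invariant, we already know $\nu(N)\in\{0,1\}$, so it suffices to rule out $\nu(N)=0$.

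First I would assume, for contradiction, that $\nu(N)=0$. Combined with the hypothesis $\mathbb{E}_\nu[\log m(x)]^+<\infty$, this places us exactly in the situation of the cited theorem. Its conclusion then gives both $\mathbb{E}_\nu|\log m(x)|<\infty$ (so in particular $\mathbb{E}_\nu[\log m(x)]^-<\infty$) and $\mathbb{E}_\nu[\log m(x)]>0$. Rewriting the latter as $\mathbb{E}_\nu[\log m(x)]^+>\mathbb{E}_\nu[\log m(x)]^-$ immediately contradicts the standing hypothesis $\mathbb{E}_\nu[\log m(x)]^+\leq \mathbb{E}_\nu[\log m(x)]^-$.

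Hence $\nu(N)\neq 0$, and the dichotomy from Proposition~\ref{cor2} forces $\nu(N)=1$, as claimed. There is no genuine obstacle here: the content lies entirely in the invoked theorem of Athreya and Karlin, and the corollary is just the logical contrapositive packaged with the ergodic zero-one alternative. The only minor point to keep in mind is handling the possibility $\mathbb{E}_\nu[\log m(x)]^-=\infty$, which is allowed by the hypothesis; but in that regime the inequality $\mathbb{E}_\nu[\log m(x)]^+\leq \mathbb{E}_\nu[\log m(x)]^-$ is automatic (given $\mathbb{E}_\nu[\log m(x)]^+<\infty$), and the contradiction with the theorem still goes through because the theorem's conclusion asserts $\mathbb{E}_\nu|\log m(x)|<\infty$, forcing $\mathbb{E}_\nu[\log m(x)]^-<\infty$.
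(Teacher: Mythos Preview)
Your argument is correct and matches the paper's approach exactly: the paper simply states ``By contrapositive, we deduce Corollary~\ref{cor4}'' from the preceding Athreya--Karlin theorem, and you have spelled out precisely that contrapositive together with the zero--one dichotomy of Proposition~\ref{cor2}.
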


Moreover, we have the following reciprocal:

\begin{theorem}\label{thm2}\cite[Theorem 3]{MR0298780}
If $\mathbb{E}_\nu[-\log(1-\varphi(x,0))]<\infty$ and $\mathbb{E}_\nu[\log m(x)]^-<\mathbb{E}_\nu[\log m(x)]^+\leq\infty$, then $\nu(N)=0$.
\end{theorem}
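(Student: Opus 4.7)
The plan is to argue by contradiction. Assume $\nu(N)=1$, equivalently $u_n(x) \defeq 1-\varphi^{(n)}(x,0) \downarrow 0$ for $\nu$-a.e.\ $x$, and derive a contradiction with the hypothesis $\mathbb{E}_\nu[\log m]^-<\mathbb{E}_\nu[\log m]^+\leq\infty$. The central object is $G(x,u) \defeq (1-\varphi(x,1-u))/u$ on $\mathbb{X}\times(0,1]$. By convexity of $\varphi(x,\cdot)$, $G(x,\cdot)$ is non-increasing with $\lim_{u\to 0^+}G(x,u)=m(x)$ and $G(x,1)=1-\varphi(x,0)$, so $G(x,u)\in[1-\varphi(x,0),m(x)]$. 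The cocycle relation of Proposition~\ref{prop5} rewrites as $u_{n+1}(x)=u_n(Tx)\,G(x,u_n(Tx))$, and iterating yields
\[
\log u_n(x) = \sum_{k=0}^{n-1}\log G\!\left(T^k x,\, u_{n-1-k}(T^{k+1}x)\right),
\]
sandwiched between $\sum_{k=0}^{n-1}\log(1-\varphi(T^kx,0))$ (in $L^1(\nu)$ by the assumed integrability) and $\sum_{k=0}^{n-1}\log m(T^kx)$ (handled by Birkhoff in the extended sense).

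The heart of the proof will be showing $\frac{1}{n}\log u_n(x)\to\mathbb{E}_\nu[\log m]$ $\nu$-a.s. Fix $\delta,\eta>0$. Since $u_j(y)\to 0$ for $\nu$-a.e.\ $y$ under the contradiction hypothesis, Egorov's theorem yields a set $B_\delta$ with $\nu(B_\delta)>1-\delta$ on which $u_j\to 0$ uniformly; a second Egorov applied to the monotone convergence $\log G(x,u)\nearrow\log m(x)$ as $u\downarrow 0$ produces a threshold $\varepsilon=\varepsilon(\eta,\delta)$ and a set $B'_\delta$ of measure $>1-\delta$ on which $\log G(x,u)\geq\log m(x)-\eta$ for all $u\leq\varepsilon$. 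Choose $N_0$ so that $u_j(y)\leq\varepsilon$ for $y\in B_\delta$ and $j\geq N_0$. Split the summation into \emph{good} indices (for which $T^{k+1}x\in B_\delta$, $T^kx\in B'_\delta$, and $n-1-k\geq N_0$) and \emph{bad} indices: on the good ones the summand is at least $\log m(T^kx)-\eta$, and on the bad ones use the envelope $\log(1-\varphi(T^kx,0))$. Birkhoff combined with the frequency estimate $\limsup_n\frac{1}{n}\#\{\text{bad indices}\}\leq 2\delta$ gives
\[
\liminf_{n\to\infty}\frac{1}{n}\log u_n(x)\;\geq\;(1-2\delta)\,\mathbb{E}_\nu[\log m]-\eta-2\delta\,\mathbb{E}_\nu\!\left|\log(1-\varphi(\cdot,0))\right|.
\]
Sending $\eta,\delta\to 0$ gives $\liminf_n\frac{1}{n}\log u_n(x)\geq\mathbb{E}_\nu[\log m]$, which is strictly positive or $+\infty$ by hypothesis, contradicting $\log u_n(x)\to-\infty$.

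The main obstacle is the coupling between the ergodic time $k$ and the iteration depth $n-1-k$ in the summand, which blocks any direct use of Birkhoff's theorem: the function being averaged along the orbit itself depends on $n$. The double-Egorov decoupling above is workable but delicate because $m(x)$ may take the value $+\infty$, requiring the extended-sense ergodic theorem and careful control of points where $\log m$ is large. The weakness of the sole integrability hypothesis $\mathbb{E}_\nu[-\log(1-\varphi(x,0))]<\infty$—with no moment assumption on $\mu_x$—is precisely what forces this roundabout route, rather than a direct Taylor expansion of $\varphi$ near $s=1$. This reproduces the argument of Athreya and Karlin \cite[Theorem 3]{MR0298780} in the present dynamical-environments setting, with $(\mu_{T^nx})_n$ playing the role of a stationary ergodic sequence of environments.
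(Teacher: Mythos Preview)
The paper does not prove Theorem~\ref{thm2} at all: it is quoted verbatim as \cite[Theorem~3]{MR0298780} and used as a black box in the proof of Theorem~\ref{thm1}. So there is nothing to compare your argument against in this manuscript; what you have written is a sketch of Athreya--Karlin's original proof, transported to the dynamical-environments notation, and you say as much in your final paragraph.

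As a sketch of that proof your outline is sound: the function $G(x,u)=(1-\varphi(x,1-u))/u$, the recursion $u_{n+1}(x)=u_n(Tx)\,G(x,u_n(Tx))$, the telescoping identity for $\log u_n$, and the Egorov decoupling of the ergodic index $k$ from the depth $n-1-k$ are all correct and are exactly the ingredients Athreya--Karlin use. One quantitative point deserves tightening: the displayed bound
\[
\liminf_{n\to\infty}\tfrac{1}{n}\log u_n(x)\ \geq\ (1-2\delta)\,\mathbb{E}_\nu[\log m]-\eta-2\delta\,\mathbb{E}_\nu\bigl|\log(1-\varphi(\cdot,0))\bigr|
\]
is not what the decomposition actually gives. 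The sum over bad indices $\frac{1}{n}\sum_{\mathrm{bad}}\log(1-\varphi(T^kx,0))$ is not bounded below by $-2\delta\,\mathbb{E}_\nu|\log(1-\varphi(\cdot,0))|$ just because the bad fraction is $\leq 2\delta$; you need Birkhoff applied to the integrable product $h\cdot\mathbf{1}_{B_\delta^c\cup (B'_\delta)^c}$ with $h=-\log(1-\varphi(\cdot,0))$, and then let $\delta\to 0$ using dominated convergence. Likewise the ``good'' sum is not literally $(1-2\delta)$ times the full Birkhoff average. With these adjustments, and with the truncation $\log m\wedge M$ you allude to in order to handle $\mathbb{E}_\nu[\log m]^+=+\infty$ and to make the second Egorov step meaningful, the contradiction goes through, since $\log u_n(x)\leq 0$ forces $\liminf_n \tfrac{1}{n}\log u_n(x)\leq 0$ against a strictly positive lower bound.
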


\begin{proof}[Proof of Theorem~\ref{thm1}]
   Corollary~\ref{cor4} and Theorem~\ref{thm2} (from the results of Athreya and Karlin \cite{MR0298780}) provide criteria to determine whether or not there is almost sure extinction in the case where $\nu$ is an ergodic measure. Specifically, $\nu(N)=0$ if and only if $\mathbb{E}_\nu[\log m(x)]>0$.
Indeed, the two integrability hypotheses are verified here:
\begin{itemize}
    \item $\mathbb{E}_\nu[-\log(1-\varphi(x,0))]<\infty$ because for all $x\in\mathbb{X}$, $\varphi(x,0)=\mu_x(0)<1$ and $\mu$ is continuous on the compact $\mathbb{X}$.
    \item $\mathbb{E}_\nu[\log m(x)]^-<\infty$ or $\mathbb{E}_\nu[\log m(x)]^+<\infty$ because for all $x\in\mathbb{X}$, $m(x)\geq1-\mu_x(0)>0$ and $\mu$ is continuous on the compact $\mathbb{X}$.\qedhere
\end{itemize} 
\end{proof}

\subsection{Dimension of the set of bad environments}\label{section2.2}

In Theorem~\ref{thm4} and Theorem~\ref{thm5}, we give the Hausdorff dimension of the set of bad environments in the critical case on two types of systems.
\begin{definition}For each $x\in\mathbb{X}$,
    \begin{align*}
        \Gamma(x)\defeq \liminf_{n\in\mathbb{N^*}}\frac{1}{n} \log \partial_s \varphi^{(n)}(x,1)=\liminf_{n\in\mathbb{N^*}}\frac{1}{n}\sum_{k=0}^{n-1}\log m(T^kx).
    \end{align*}
\end{definition}

In Lemma~\ref{lem6}, we observe that $\Gamma(x)$ is the right quantity to know if $x$ is in the set of bad environments.

\begin{lemma}\label{lem6} Assume (H\ref{hyp1}) and that $C\defeq\underset{x\in\mathbb{X}}{\sup}~\underset{k=0}{\overset{+\infty}{\sum}}k^2\mu_x(k)<+\infty$. For each $x\in\mathbb{X}$: 
    \begin{itemize}
        \item if $x\in N$, then $\Gamma(x)\leq0$,
        \item if $x\notin N$, then $\Gamma(x)\geq0$.
    \end{itemize}
\end{lemma}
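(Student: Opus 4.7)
The plan is to prove the two implications from opposite-direction inequalities relating $1-q$ (or the survival probability $\mathbb{P}(Z_n>0)$) to the product $m^{(n)}(x)\defeq\prod_{k=0}^{n-1}m(T^kx)$, which by Corollary~\ref{cor1} evaluated at $s=1$ equals $\partial_s\varphi^{(n)}(x,1)$.

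For the second bullet I would argue directly. Since $N=T^{-1}N$ by Proposition~\ref{cor2}, $x\notin N$ forces $q(T^nx)<1$ for every $n$. Starting from $q(x)=\varphi^{(n)}(x,q(T^nx))$ (Lemma~\ref{lem2}) and using convexity of $s\mapsto\varphi^{(n)}(x,s)$ together with $\partial_s\varphi^{(n)}(x,1)=m^{(n)}(x)$, I would get
\begin{align*}
0<1-q(x)\leq m^{(n)}(x)\bigl(1-q(T^nx)\bigr)\leq m^{(n)}(x).
\end{align*}
Taking logarithms, dividing by $n$ and passing to the liminf then yields $\Gamma(x)\geq 0$.

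For the first bullet I would proceed by contrapositive: assume $\Gamma(x)>0$ and deduce $q(x)<1$. The key ingredient is the Paley-Zygmund/Cauchy-Schwarz inequality $\mathbb{P}(Z_n>0)\geq (\mathbb{E} Z_n)^2/\mathbb{E}[Z_n^2]$. Conditioning on $Z_n$ and using that the offspring variance $\sigma^2(T^kx)$ at step $k$ is bounded by $C$, the standard variance recursion telescopes into
\begin{align*}
W_n\defeq\frac{\mathbb{E}[Z_n^2]}{(m^{(n)}(x))^2}=1+\sum_{k=0}^{n-1}\frac{\sigma^2(T^kx)}{m(T^kx)^2\,m^{(k)}(x)}.
\end{align*}
The hypothesis $\Gamma(x)>0$ forces $m^{(k)}(x)\geq e^{\delta k}$ eventually for some $\delta>0$, so the series converges and $W_n\leq W_\infty<\infty$. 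The Paley-Zygmund bound then gives $\mathbb{P}(Z_n>0)\geq 1/W_\infty>0$, and since $\mathbb{P}(Z_n>0)$ decreases to $1-q(x)$ we conclude $q(x)<1$, i.e.\ $x\notin N$.

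The main obstacle is ensuring $m_*\defeq\inf_{\mathbb{X}}m>0$, which is needed to control $1/m(T^kx)^2$ uniformly in the telescoping identity. For this I would observe that the uniform second-moment bound supplies the stochastic dominating measure required by Proposition~\ref{prop2} (via the construction in the remark following Theorem~\ref{thm3}), so $\partial_s\varphi$, and hence $m$, is continuous on $\mathbb{X}$; combined with $m(x)>0$ pointwise (from (H\ref{hyp1}), since $\mu_x\neq\delta_0$) and compactness of $\mathbb{X}$, this gives $m_*>0$.
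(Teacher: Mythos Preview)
Your argument is correct. The second bullet is essentially identical to the paper's (the paper phrases it as the contrapositive ``$\Gamma(x)<0\Rightarrow x\in N$'' and writes $1-\varphi^{(n_k)}(x,0)\le \partial_s\varphi^{(n_k)}(x,1)$ via the integral, which is the same convexity inequality you use).

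For the first bullet the two proofs diverge in presentation but not in substance. The paper outsources the conclusion to Kersting's criterion \cite[Theorem~1(vii)]{MR4094390}, checking that
\[
\sum_{n}\frac{1}{\partial_s\varphi^{(n-1)}(x,1)}\left(\frac{\partial^2_{s}\varphi(T^nx,1)}{ m(T^nx)^2}+\frac{1-m(T^nx)}{m(T^nx)} \right)<\infty.
\]
Since the bracket rewrites as $\sigma^2(T^nx)/m(T^nx)^2$, this is exactly your series $\sum_k \sigma^2(T^kx)/(m(T^kx)^2 m^{(k)}(x))$, and your Paley--Zygmund/second-moment recursion is precisely the elementary computation underlying Kersting's criterion in this case. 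Your route has the advantage of being self-contained; the paper's has the advantage of brevity. One remark: your detour through Proposition~\ref{prop2} to obtain $m_*>0$ is unnecessarily heavy --- it suffices to note $m(y)\ge 1-\mu_y(0)$ and that $\sup_y\mu_y(0)<1$ by continuity of $\mu$ on the compact $\mathbb{X}$, which is exactly what the paper does.
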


\begin{proof}~
\begin{itemize}
\item Let $x\in\mathbb{X}$ such that $\Gamma(x)>0$. As for any $n\in\mathbb{N}$, $m(T^nx)\leq C$ and $\partial^2_{s}\varphi(T^nx,1)\leq C$. Since $\mu$ is continuous, and $\mu_{y}({0})<1$ for every $y\in\mathbb{X}$ (which is compact), there exists $c>0$ such that for every $n\in\mathbb{N}$, $m(T^nx)>c$. Since $\Gamma(x)>0$, there exists $\delta>0$ and $N\in\mathbb{N}$ such that for every $n\geq N$, $\partial_s\varphi^{(n)}(x,1)\geq e^{\delta n}$.

Thus, the sequence given by: 
\begin{align*}
   \frac{1}{\partial_s\varphi^{(n-1)}(x,1)} \left(\frac{\partial^2_{s}\varphi(T^nx,1)}{ m(T^nx)^2}+\frac{1-m(T^nx)}{m(T^nx)} \right)
\end{align*}
is the general term of a convergent series. Hence, the assumption \cite[A]{MR4094390} and the condition \cite[vii Theorem 1]{MR4094390} are satisfied, so $q(x)<1$, i.e.\ $x\notin N$.

\item Let $x\in\mathbb{X}$ such that $\Gamma(x)<0$. There exists an extracted sequence $(n_k)_{k\in\mathbb{N}}$ and $\delta>0$ such that for all $k\in\mathbb{N}$, \begin{align*}
    \partial_s \varphi^{(n_k)}(x,1)\leq e^{-\delta n_k}.
\end{align*}
Then for each $k\in\mathbb{N}$, 
\begin{align*}
    q(x)&\geq \varphi^{(n_k)}(x,0)\\
    &=1-\int_0^1 \partial_s \varphi^{(n_k)}(x,t) \,\mathrm{d}t\\
    &\geq 1 - \sup_{t\in [0,1]} \partial_s \varphi^{(n_k)}(x,t)\\
    &\geq 1- e^{-\delta n_k}\underset{k\to +\infty}{\longrightarrow} 1.
\end{align*}
Thus, $q(x)=1$, i.e.\ $x\in N$.\qedhere
\end{itemize}    
\end{proof}

By Proposition~\ref{prop2}, under the assumptions of Lemma~\ref{lem14}, $m$ is continuous. But, if $\mu$ is Hölder continuous, we need a stronger integrability hypothesis that $m$ inherits the Hölder regularity of $\mu$.

\begin{lemma}\label{lem14}
    Assume (H\ref{hyp1}), that there exists $\varepsilon>0$ such that $\underset{x\in\mathbb{X}}{\sup}~\underset{k=0}{\overset{+\infty}{\sum}}k^{1+\varepsilon}\mu_x(k)<+\infty$, and that $x\in\mathbb{X}\mapsto \mu_x$ is $\alpha$-Hölder continuous for the $\ell^1$ norm (for $\alpha\in (0,1]$). Then $m$ is $\alpha\frac{\varepsilon}{1+\varepsilon}$ Hölder continuous.
\end{lemma}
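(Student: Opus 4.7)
The plan is to use a truncation argument balancing the Hölder regularity of $\mu$ against the uniform moment bound. Write $M \defeq \sup_{x\in\mathbb{X}} \sum_{k=0}^{+\infty} k^{1+\varepsilon}\mu_x(k) < +\infty$, and let $C_H$ denote the $\alpha$-Hölder constant of $x\mapsto \mu_x$ in the $\ell^1$ norm. Since $m(x) = \sum_{k\geq 0} k\mu_x(k)$, for any $x,y\in\mathbb{X}$ and any integer $K\geq 1$ I would split
\begin{align*}
|m(x)-m(y)| \;\leq\; \sum_{k=0}^{K} k\,|\mu_x(k)-\mu_y(k)| \;+\; \sum_{k=K+1}^{+\infty} k\,|\mu_x(k)-\mu_y(k)|.
\end{align*}

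For the finite part, I bound $k\leq K$ and use Hölder continuity of $\mu$ in $\ell^1$ to get a bound of $K\,C_H\, d(x,y)^\alpha$. For the tail, the key trick is the elementary inequality $k \leq K^{-\varepsilon} k^{1+\varepsilon}$ valid when $k\geq K$, which together with the triangle inequality and the uniform moment bound yields
\begin{align*}
\sum_{k=K+1}^{+\infty} k\,|\mu_x(k)-\mu_y(k)| \;\leq\; K^{-\varepsilon} \sum_{k=K+1}^{+\infty} k^{1+\varepsilon}\bigl(\mu_x(k)+\mu_y(k)\bigr) \;\leq\; 2M\,K^{-\varepsilon}.
\end{align*}
Combining the two estimates gives $|m(x)-m(y)| \leq K C_H d(x,y)^\alpha + 2M K^{-\varepsilon}$.

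The final step is to optimize in $K$: the two terms balance when $K \approx d(x,y)^{-\alpha/(1+\varepsilon)}$. Choosing $K = \max\{1,\lceil d(x,y)^{-\alpha/(1+\varepsilon)}\rceil\}$ (and handling $x=y$ trivially), both contributions are controlled by a constant multiple of $d(x,y)^{\alpha\varepsilon/(1+\varepsilon)}$, which yields the claimed Hölder exponent. For points $x,y$ with $d(x,y)$ bounded below (so $K=1$) the bound is automatic, since $m$ is then bounded on the compact $\mathbb{X}$ by Proposition~\ref{prop2}.

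There is no real obstacle here beyond bookkeeping: the argument is the standard interpolation inequality between an $\ell^1$ modulus of continuity and a uniform $(1+\varepsilon)$-moment bound, and the exponent $\alpha\varepsilon/(1+\varepsilon)$ comes out as the unique one for which the two terms in the split can be balanced.
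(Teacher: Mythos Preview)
Your truncation-and-balance argument is correct and yields the claimed exponent $\alpha\varepsilon/(1+\varepsilon)$. The paper, however, takes a shorter route: it applies H\"older's inequality directly to the sum $\sum_k k\,|\mu_x(k)-\mu_y(k)|$, writing $k\,|\mu_x(k)-\mu_y(k)| = \bigl(k^{1+\varepsilon}|\mu_x(k)-\mu_y(k)|\bigr)^{1/(1+\varepsilon)}\,|\mu_x(k)-\mu_y(k)|^{\varepsilon/(1+\varepsilon)}$ and using exponents $(1+\varepsilon,\tfrac{1+\varepsilon}{\varepsilon})$. This gives
\[
|m(x)-m(y)|\;\le\;\Bigl(\textstyle\sum_k k^{1+\varepsilon}|\mu_x(k)-\mu_y(k)|\Bigr)^{1/(1+\varepsilon)}\,\lVert\mu_x-\mu_y\rVert_1^{\varepsilon/(1+\varepsilon)},
\]
after which one bounds the first factor by the uniform moment hypothesis and the second by the $\alpha$-H\"older assumption. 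Your argument is essentially the ``real-variable'' proof of this very interpolation inequality: the cutoff at $K$ and the choice $K\sim d(x,y)^{-\alpha/(1+\varepsilon)}$ reproduce exactly what H\"older's inequality does in one stroke. The paper's version avoids the integer-$K$ bookkeeping and the separate treatment of large $d(x,y)$, so it is cleaner; your version is more hands-on and makes the balancing mechanism explicit.
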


\begin{proof}
    Let $x,y\in\mathbb{X}$, by Hölder's inequality,
    \begin{align*}
        |m(x)-m(y)|&\leq\sum_{k=0}^{+\infty}k|\mu_x(k)-\mu_y(k)|\\
        &\leq\left(\sum_{k=0}^{+\infty}k^{1+\varepsilon}|\mu_x(k)-\mu_y(k)|\right)^{1/(1+\varepsilon)}\left(\sum_{k=0}^{+\infty}|\mu_x(k)-\mu_y(k)|\right)^{\varepsilon/(1+\varepsilon)}\\
        &\leq C^{1/(1+\varepsilon)}\lVert\mu_x-\mu_y\rVert_1^{\varepsilon/(1+\varepsilon)}\\
        &\leq C^{1/(1+\varepsilon)} |\mu|_\alpha^{\varepsilon/(1+\varepsilon)}d(x,y)^{\alpha\varepsilon/(1+\varepsilon)}.\qedhere
    \end{align*}
\end{proof}

\begin{proof}[Proof of Theorem~\ref{thm4}]
  In the case where $\mathbb{X}$ is a two-dimensional compact Riemannian manifold, the proof of Theorem~\ref{thm4} is the same as the proof of \cite[Theorem 2]{MR3170606} (which uses results from \cite[Section 10 and 12]{MR2434246}) using the correspondence table from Section \ref{section1}, Lemma~\ref{lem6} and the Hölder continuity of $m$ (Lemma~\ref{lem14}). The results of \cite{MR2434246} were originally proved in \cite{MR1837214,MR1994883,MR2238885}. However, we study the dynamic of $T$, unlike Keller and Otani, who study the dynamics of $T^{-1}$. We will therefore use $u\defeq\log \lVert dT|E^u\rVert$ and for all $\lambda\in (\lambda_{\min},\lambda_{\max})$, \begin{align*}
     D (\lambda)= \max\left\{\frac{h_T(\nu)}{\nu(\log \lVert dT|E^u\rVert)}:\nu\in\mathcal{P}_T(\mathbb{X})\text{ and } \int_{\mathbb{X}}\log m(x)\, \mathrm{d}\nu(x)= \lambda\right\}.
\end{align*} 

If $T$ is a $\mathcal{C}^2$ uniformly expanding transformation of $\RZ$, the system is conjugated to a one-sided full shift of finite type (using a Markov partition). The entropy function is upper semi-continuous, as $T$ is expansive. Thus, the proofs of \cite[Theorem 2]{MR3170606} and \cite{MR2434246} can be adapted by considering only the dilating direction.
We will therefore use for all $\lambda\in (\lambda_{\min},\lambda_{\max})$, \begin{align*}
     D (\lambda)= \max\left\{\frac{h_T(\nu)}{\nu(\log T')}:\nu\in\mathcal{P}_T(\RZ)\text{ and } \int_{\mathbb{R}/\mathbb{Z}}\log m(x)\, \mathrm{d}\nu(x)= \lambda\right\}.
\end{align*} 
The maximum of the function $D$ is reached by: \begin{align*}
    \lambda=\int_{\mathbb{R}/\mathbb{Z}}\log m(x)\, \mathrm{d}\nu_{Leb}(x)
\end{align*} and is equal to one where $\nu_{Leb}$ is the unique invariant probability measure absolutely continuous with respect to the Lebesgue measure (see \cite{MR0245761}).
To adapt \cite[Lemma 2]{MR3170606}, unlike Keller and Otani, removing a dimension is unnecessary because our model has only an unstable dimension.
Results of \cite[Chapter 10]{MR2434246} (10.1.4, 10.3.1, and 10.1.6) and \cite[Example 7.2.5]{MR2434246} are applicable to our model. This gives us Lemma~\ref{lem15} and Lemma~\ref{lem16}.
For $\lambda\in (\lambda_{\min},\lambda_{\max})$, let:
\begin{align*}
    S_\lambda=\left\{ x\in\RZ:\frac{1}{n}\sum_{i=0}^{n-1}\log m(T^ix)\underset{n\to+\infty}{\longrightarrow}\lambda\right\}.
\end{align*}

\begin{lemma}\cite[Lemma 2]{MR3170606}\label{lem15}
    For $\lambda\in (\lambda_{\min},\lambda_{\max})$, $dim_H(S_\lambda)=D(\lambda)$.
\end{lemma}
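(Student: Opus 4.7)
The plan is to carry out a standard multifractal analysis for the Birkhoff spectrum of the Hölder potential $\log m$ under the expanding map $T$, essentially translating \cite[Lemma 2]{MR3170606} into the one-dimensional expanding setting where only the unstable direction persists. The first step is to use a Markov partition to conjugate $(\RZ, T)$ to a one-sided subshift of finite type, reducing the problem to symbolic dynamics where the thermodynamic formalism applies cleanly. Here the Hölder regularity of $m$, established in Lemma~\ref{lem14} (using (H\ref{hyp5}) which provides both uniform bounded second moments and Hölder continuity of $\mu$), is crucial so that the pressure function $t,s \mapsto P(-t\log T' + s\log m)$ is real-analytic and its equilibrium states are Gibbs measures.

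For the upper bound $dim_H(S_\lambda)\leq D(\lambda)$, I would run a covering argument. By $\mathcal{C}^2$ expansion and bounded distortion, a rank-$n$ cylinder $C_n(x)$ has diameter comparable to $\exp(-S_n\log T'(x))$, where $S_n$ denotes the Birkhoff sum. For $x\in S_\lambda$, the cylinder containing $x$ has an average of $\log m$ close to $\lambda$, so large-deviation estimates bound the number of admissible cylinders at rate $\exp(n h_T(\nu))$ for measures $\nu$ satisfying $\int \log m\,d\nu = \lambda$. Summing the resulting $s$-Hausdorff pre-measures and optimizing the exponent over admissible $\nu$ yields the bound $D(\lambda)$.

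For the lower bound $dim_H(S_\lambda)\geq D(\lambda)$, I would invoke the variational principle: because entropy is upper semi-continuous ($T$ being expansive) and the constraint set $\{\nu\in\mathcal{P}_T(\RZ):\int\log m\,d\nu = \lambda\}$ is compact and nonempty for $\lambda\in(\lambda_{\min},\lambda_{\max})$, the supremum in $D(\lambda)$ is attained at some $\nu^*$. Passing to an ergodic component preserving both the entropy ratio and the constraint (by concavity of entropy / convex structure of the constraint), Birkhoff's theorem gives $\nu^*(S_\lambda)=1$, and Young's pointwise dimension formula for conformal expanding maps gives $dim_H(\nu^*) = h_T(\nu^*)/\int\log T'\,d\nu^* = D(\lambda)$. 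Hence $dim_H(S_\lambda)\geq dim_H(\nu^*) = D(\lambda)$.

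The main obstacle is verifying that the machinery of \cite[Chapter 10]{MR2434246} (in particular 10.1.4, 10.3.1 and 10.1.6, together with Example 7.2.5) applies verbatim in our setting despite the loss of a dimension compared to Keller–Otani. The key issue is that one must know $\log m$ is a Hölder observable (so the spectrum is the Legendre transform of a smooth pressure) and that $\log T'$ is the correct local scaling rate; both are granted by (H\ref{hyp5}) and the smoothness of $T$. Once this is checked, the argument of \cite[Lemma 2]{MR3170606} goes through, with the simplification that removing a stable direction is unnecessary because our model is purely one-dimensional on the unstable side.
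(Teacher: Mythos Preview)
Your proposal is correct and follows essentially the same route as the paper: the paper does not give a self-contained proof of this lemma but merely records that the hypotheses of \cite[Chapter 10]{MR2434246} (specifically 10.1.4, 10.3.1, 10.1.6 and Example 7.2.5) are met in the one-sided expanding setting---namely, $\log m$ is Hölder by Lemma~\ref{lem14}, the system codes to a one-sided full shift via a Markov partition, and entropy is upper semi-continuous by expansiveness---and then invokes those results together with the argument of \cite[Lemma 2]{MR3170606}, noting exactly as you do that no stable direction needs to be removed. Your sketch of the covering/upper-bound and equilibrium-state/lower-bound mechanism is simply an unpacking of what those cited results contain, so there is no substantive difference in approach.
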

Moreover, the function $D$ is analytic, but we only need the continuity of $D$ to prove Theorem~\ref{thm4}.
\begin{lemma}\cite[Lemma 3]{MR3170606}\label{lem16}
    The function $\lambda\in(\lambda_{\min},\lambda_{\max})\mapsto D(\lambda)$
is continuous.\end{lemma}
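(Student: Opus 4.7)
The plan is to establish continuity of $D$ on $(\lambda_{\min},\lambda_{\max})$ by proving upper and lower semi-continuity separately, paralleling the proof of Lemma~3 in \cite{MR3170606}. A preliminary step is to justify that the maximum defining $D(\lambda)$ is actually attained. I would argue by compactness: for $\lambda\in(\lambda_{\min},\lambda_{\max})$ the constraint set $M_\lambda\defeq\{\nu\in\mathcal{P}_T(\mathbb{X}):\int\log m\,\mathrm{d}\nu=\lambda\}$ is weak-$*$ compact, since $\log m$ is continuous (Proposition~\ref{prop2}) and bounded on the compact $\mathbb{X}$, and nonempty by an intermediate value argument using convex combinations of measures approximating $\lambda_{\min}$ and $\lambda_{\max}$. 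Further, $\nu\mapsto h_T(\nu)$ is upper semi-continuous because $T$ is expansive (both in the $\mathcal{C}^2$-expanding and $\mathcal{C}^2$-Anosov setting), and the denominator $\nu\mapsto\int u\,\mathrm{d}\nu$ (with $u=\log T'$ or $u=\log\|dT|E^u\|$) is continuous and bounded below by a positive constant thanks to uniform hyperbolicity. Hence $\nu\mapsto h_T(\nu)/\int u\,\mathrm{d}\nu$ is upper semi-continuous on the compact $M_\lambda$ and attains its maximum.

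For upper semi-continuity of $D$, given a sequence $\lambda_n\to\lambda$ in $(\lambda_{\min},\lambda_{\max})$, I would select $\nu_n\in M_{\lambda_n}$ realizing $D(\lambda_n)$, extract a weak-$*$ convergent subsequence $\nu_n\rightharpoonup\nu^*$, and observe that $\nu^*\in M_\lambda$ by the weak-$*$ continuity of $\nu\mapsto\int\log m\,\mathrm{d}\nu$. Upper semi-continuity of entropy gives $\limsup h_T(\nu_n)\leq h_T(\nu^*)$, while continuity of $u$ yields $\int u\,\mathrm{d}\nu_n\to\int u\,\mathrm{d}\nu^*>0$. Combining, $\limsup D(\lambda_n)\leq h_T(\nu^*)/\int u\,\mathrm{d}\nu^*\leq D(\lambda)$.

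For lower semi-continuity, the plan is to build explicit perturbations of a maximizer. Let $\nu^*\in M_\lambda$ achieve $D(\lambda)$. After passing to a subsequence along which $\lambda_n$ lies strictly on one side of $\lambda$ (the two sides are symmetric), say $\lambda_n>\lambda$, I would fix $\mu\in\mathcal{P}_T(\mathbb{X})$ with $\lambda'\defeq\int\log m\,\mathrm{d}\mu>\lambda$ (available because $\lambda<\lambda_{\max}$) and set, for $n$ large, $\epsilon_n\defeq(\lambda_n-\lambda)/(\lambda'-\lambda)\in(0,1)$ and $\nu_n\defeq(1-\epsilon_n)\nu^*+\epsilon_n\mu\in\mathcal{P}_T(\mathbb{X})$. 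By construction $\nu_n\in M_{\lambda_n}$; by affinity of the metric entropy on the set of $T$-invariant probability measures, $h_T(\nu_n)=(1-\epsilon_n)h_T(\nu^*)+\epsilon_n h_T(\mu)\to h_T(\nu^*)$, and similarly $\int u\,\mathrm{d}\nu_n\to\int u\,\mathrm{d}\nu^*$. Therefore $D(\lambda_n)\geq h_T(\nu_n)/\int u\,\mathrm{d}\nu_n\to D(\lambda)$, giving $\liminf D(\lambda_n)\geq D(\lambda)$.

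The main obstacle, and the place where expansivity of $T$ is essential, is the upper semi-continuity of metric entropy under weak-$*$ limits; the remaining ingredients (weak-$*$ compactness of $M_\lambda$, affinity of entropy on invariant measures, strict positivity and continuity of $\int u\,\mathrm{d}\nu$) are standard consequences of uniform hyperbolicity and Proposition~\ref{prop2}. A secondary but necessary bookkeeping point is ensuring, on each side of $\lambda$, the existence of an invariant measure $\mu$ with $\int\log m\,\mathrm{d}\mu$ strictly on that side, which follows directly from the definitions of $\lambda_{\min}$ and $\lambda_{\max}$ as the infimum and supremum of $\nu\mapsto\int\log m\,\mathrm{d}\nu$ on $\mathcal{P}_T(\mathbb{X})$.
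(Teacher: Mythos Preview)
Your argument is correct and self-contained: the upper semi-continuity via weak-$*$ compactness of maximizers together with upper semi-continuity of entropy (from expansivity), and the lower semi-continuity via affine interpolation between a maximizer and an auxiliary invariant measure, both go through as written. The only minor point worth tightening is, in the upper semi-continuity step, to first pass to a subsequence along which $D(\lambda_n)$ achieves its $\limsup$ before extracting a weak-$*$ convergent subsequence of the maximizers; you implicitly do this, but it is worth stating.

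The paper, however, does not give its own proof of this lemma: it simply cites \cite[Lemma~3]{MR3170606} and remarks that $D$ is in fact analytic, continuity being all that is needed downstream. The Keller--Otani argument (as one sees from the use made of it in the proof of Lemma~\ref{lem17}) proceeds through thermodynamic formalism: one represents $D(\lambda)$ implicitly via the pressure function $P_\sigma(q\phi - D\,d)=0$ for Hölder potentials $\phi=\log m\circ\chi$ and $d=\log T'\circ\chi$, and analyticity of the pressure on Hölder potentials together with the implicit function theorem yields analyticity of $D$. Your route is genuinely different and more elementary---it avoids equilibrium states and the regularity of pressure entirely---at the cost of yielding only continuity rather than analyticity. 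Since the paper explicitly notes that continuity suffices for Theorem~\ref{thm4}, your approach is a perfectly adequate replacement for the citation.
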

Lemma~\ref{lem15} and Lemma~\ref{lem16} allow us to prove Theorem~\ref{thm4}. By Lemma~\ref{lem6}, if $\lambda\in(\lambda_{\min},0)$, then $S_\lambda\subset N$. Let $\lambda_c\defeq\mathbb{E}_{\nu_{Leb}}[\log m]$.
\begin{itemize}
 \item  Case $\lambda_c\leq 0$. Let $\lambda\in(\lambda_{\min},\lambda_c)$. So, \begin{align*}
      dim_H(S_\lambda)\leq dim_H(N)\leq 1.
 \end{align*}
 Therefore, by Lemma~\ref{lem15} and Lemma~\ref{lem16},\begin{align*}
     dim_H(S_\lambda)=D(\lambda)\underset{\lambda\to\lambda_c^-}{\longrightarrow}D(\lambda_c)=1.
 \end{align*}
 Thus, \begin{align*}
     dim_H(N)= 1.
 \end{align*}
 \item Case $\lambda_c> 0$. Let $\lambda\in(\lambda_{\min},0)$. So,
 \begin{align*}
      dim_H(S_\lambda)\leq dim_H(N),
 \end{align*} Therefore, by Lemma~\ref{lem15} and Lemma~\ref{lem16},\begin{align*}
     dim_H(S_\lambda)=D(\lambda)\underset{\lambda\to0^-}{\longrightarrow}D(0).
 \end{align*}
 Let,
\begin{align*}
    N^+=\left\{ x\in\RZ:\liminf_{n\to +\infty}\frac{1}{n}\sum_{i=0}^{n-1}\log m(T^ix)\leq 0\right\}.
\end{align*}
By Lemma~\ref{lem6}, $N\subset N^+$. There remains to bound the dimension of $N^+$ by $D(0)$ using the lower pointwise dimension.
\begin{lemma}\cite[Lemma 4]{MR3170606}\label{lem17} There exists $\nu\in\mathcal{P}(\RZ)$ such that
  for all $x\in N^+$, \begin{align*}
     \underline{d}_{\nu}(x)\defeq\liminf_{r \to 0}\frac{\log \nu\left(\mathcal{B}(x,r)\right)}{\log r}\leq D(0). 
  \end{align*}  
\end{lemma}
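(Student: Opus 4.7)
The plan is to run the standard multifractal upper-bound argument using the Gibbs property of a carefully chosen equilibrium state. Since $T$ is a $\mathcal{C}^2$ uniformly expanding map of $\RZ$, a Markov partition conjugates it to a one-sided subshift of finite type, and the thermodynamic formalism provides, for every Hölder continuous potential, a unique equilibrium state with the Gibbs property (and with uniform bounded distortion on metric balls). By Lemma~\ref{lem14}, $\log m$ is Hölder, so the pressure $P(s,t)\defeq P_{\mathrm{top}}(-s\log T'+t\log m)$ is well defined and real-analytic on $\mathbb{R}^2$.

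First I would identify the right parameters on the curve $\{P=0\}$. Bowen's formula places $(s,t)=(1,0)$ on this curve with equilibrium state $\nu_{\mathrm{Leb}}$, and $\int\log m\,d\nu_{\mathrm{Leb}}=\lambda_c>0$ by the case hypothesis. Along the curve one has $s'(t)=\nu_{s,t}(\log m)/\nu_{s,t}(\log T')$, whose sign coincides with that of $\int\log m\,d\nu_{s,t}$. Combining this with the Legendre-type duality underlying Lemma~\ref{lem15}, I would reach a unique point $(D(0),t_0)$ with $t_0<0$ whose equilibrium state $\nu$, associated to the potential $\phi\defeq-D(0)\log T'+t_0\log m$, satisfies $\int\log m\,d\nu=0$ and $h_\nu(T)=D(0)\nu(\log T')$. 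This $\nu$ will be the measure claimed by the lemma.

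Next, the Gibbs property yields constants $C\ge 1$ and $\rho_0>0$ such that $C^{-1}\exp(S_n\phi(x))\le\nu(B_n(x,\rho_0))\le C\exp(S_n\phi(x))$ for the $n$-th Bowen ball $B_n(x,\rho_0)$; the $\mathcal{C}^2$ bounded distortion gives $\mathcal{B}(x,r_n(x))\subset B_n(x,\rho_0)$ with $r_n(x)\asymp (T^n)'(x)^{-1}$. Now fix $x\in N^+$. Since $\log m$ and $\log T'$ are continuous on the compact $\RZ$, I can extract a subsequence $(n_k)$ along which both $\tfrac{1}{n_k}S_{n_k}\log m(x)\to\ell\le 0$ and $\tfrac{1}{n_k}S_{n_k}\log T'(x)\to L\in[\log\kappa,\log\|T'\|_\infty]$ (the first limit is $\le 0$ because $\liminf\le 0$ for $x\in N^+$). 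Then
\begin{align*}
\frac{\log\nu(\mathcal{B}(x,r_{n_k}(x)))}{\log r_{n_k}(x)}\;\xrightarrow[k\to\infty]{}\;\frac{-D(0)L+t_0\ell}{-L}\;=\;D(0)-\frac{t_0\ell}{L}\;\le\;D(0),
\end{align*}
using $t_0<0$, $\ell\le 0$, and $L>0$. Taking a liminf over $r$ therefore gives $\underline{d}_\nu(x)\le D(0)$.

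The main obstacle is justifying the sign $t_0<0$ rigorously: this is exactly where the case hypothesis $\lambda_c>0$ (equivalently $D(0)<1$) enters, via strict convexity of the pressure and the resulting monotonicity of $s$ along $\{P=0\}$. A secondary technical matter is that the Birkhoff ratios need not converge for arbitrary $x\in N^+$; this is handled by a further subsequence extraction using compactness of the joint range of $(\log m,\log T')$. Once these points are dealt with, the argument is precisely \cite[Lemma 4]{MR3170606}, transported through the correspondence of Section~\ref{section1}.
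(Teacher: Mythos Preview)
Your approach is essentially the paper's own: construct the equilibrium state $\nu$ for the potential $-D(0)\log T'+t_0\log m$ on the curve $\{P=0\}$ with $\int\log m\,d\nu=0$ and $t_0<0$, then use the Gibbs property and the sign of $t_0$ to bound the lower pointwise dimension on $N^+$. The paper phrases the subsequence argument with a fixed $\delta>0$ sent to $0$ at the end, while you extract a subsequence along which both Birkhoff averages converge; these are equivalent.

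One technical slip: to get $\underline{d}_\nu(x)\le D(0)$ you need a \emph{lower} bound on $\nu(\mathcal{B}(x,r))$, hence the inclusion $B_n(x,\rho_0)\subset\mathcal{B}(x,c\,r_n(x))$ (cylinder or Bowen ball inside a metric ball), not the reverse inclusion you wrote. Both inclusions hold for $\mathcal{C}^2$ expanding maps by bounded distortion, so the fix is immediate, but as stated your inclusion gives only an upper bound on $\nu(\mathcal{B}(x,r_n(x)))$, which is the wrong direction.
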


\begin{proof}
Let $p\geq 2$ be the degree of $T$. Consider a Markov partition of $\RZ$, and let $\sigma$ be the associated one-sided topological Markov Chain defined on $\{1,\ldots,p\}^\mathbb{N}$. We also consider the coding map $\chi:\{1,\ldots,p\}^\mathbb{N}\to \RZ$ obtained from the Markov partition.
Define the following functions on $\{1,\ldots,p\}^\mathbb{N}$:\begin{align*}
    \phi&\defeq \log m \circ \chi,\\
   d&\defeq \log T' \circ \chi.
\end{align*}
Let $D\defeq D(0)$ and $a(w)\defeq q\phi(w)-D d(w)$ where $q\in\mathbb{R}$. As $T$ is topologically mixing, there exists a unique $q\in\mathbb{R}$ and a unique equilibrium measure $\Tilde{\nu}$ of $a$ such that $P_\sigma(a)=0$ and $\int_{\{1,\ldots,p\}^\mathbb{N}}\phi(w)\,\mathrm{d}\Tilde{\nu}(w)=0$. Moreover, $\Tilde{\nu}$ is a Gibbs measure and $q<0$ (see \cite[Lemma 12.3.3]{MR2434246} and the proof of \cite[Lemma 3]{MR3170606}).

For all $n\in\mathbb{N}$, for all $(i_0,\ldots,i_n)\in\{1,\ldots,p\}^{n+1}$, we define the cylinder: \begin{align*}
    C_{i_0,\ldots,i_n}\defeq \{i_0\}\times \ldots\times \{i_n\}\times \{1,\ldots,p\}^\mathbb{N}.
\end{align*}
As $\Tilde{\nu}$ is a Gibbs measure and $P(a)=0$, there exists a constant $K>0$ such that $n\in\mathbb{N}$, for all $(i_0,\ldots,i_n)\in\{1,\ldots,p\})^{n+1}$, and for all $\omega\in C_{i_0,\ldots,i_n}$:
\begin{align}\label{e2}
    \frac{1}{K}<\frac{\Tilde{\nu}(C_{i_0,\ldots,i_n})}{\exp\sum_{i=0}^{n-1}a(\sigma^i \omega)}<K.
\end{align}

Let $\nu\defeq\Tilde{\nu}\circ \chi^{-1}$ a measure on $\RZ$ defined on the Markov partition and\begin{align*}
    \Omega_0\defeq \left\{w\in \{1,\ldots,p\}^\mathbb{N}, \liminf_{n\to\infty}\sum_{n=0}^{+\infty}\phi(\sigma^n w)\leq 0\right\}.
\end{align*}
Let $\delta>0$, $w\in\Omega_0$, and $x=\chi(\omega)\in\RZ$, there exists an increasing sequence $(n_k)_{k\in\mathbb{N}}\in \mathbb{N}^\mathbb{N}$ such that for all $k\in\mathbb{N}$, \begin{align}\label{e1}
    q\sum_{i=0}^{n_k}\phi(\sigma^i w)\geq q n_k \delta.
\end{align}
Let $(r_k)_{k\in\mathbb{N}}\in\mathbb{R}^N$ such that for all $k\in\mathbb{N}$, \begin{alignat}{3}\label{e3}
    -\sum_{i=0}^{n_k} d(\sigma ^i \omega)>\log r_k &\text{ and } &-\sum_{i=0}^{n_k+1} d(\sigma ^i \omega)\leq \log r_k.
\end{alignat}
As $d>0$ and continuous, \begin{alignat}{3}\label{e4}
    -(n_k+1)\inf d> \log r_k &\text{ and } &  -(n_k+2)\sup d\leq\log r_k.
\end{alignat}

In particular, as $n_k\underset{k\to\infty}{\longrightarrow}+\infty $, $r_k\underset{k\to\infty}{\longrightarrow}0 $. Let $(i_n)_{n\in\mathbb{N}}\in\{1\ldots p\}^\mathbb{N}$ be the sequence such that for all $n\in\mathbb{N}$, $\omega\in C_{i_0,\ldots,i_n}$.
Combining (\ref{e1}) and (\ref{e2}) we obtain, there exists $\rho>1$ such that for all $k\in\mathbb{N}$, \begin{align*}
   \nu( \mathcal{B}(x,\rho r_k))&\geq \Tilde{\nu}(C_{i_0,\ldots,i_{n_k}})\\
   & \geq \frac{1}{K}\exp \left(q \sum_{i=0}^{n_k}\phi(\sigma ^i \omega)-D\sum_{i=0}^{n_k}d(\sigma ^i \omega)\right)\\
   & \geq \frac{1}{K}\exp \left(q n_k \delta -D\sum_{i=0}^{n_k}d(\sigma ^i \omega)\right)
\end{align*}
So by (\ref{e3}) and by (\ref{e4}):
\begin{align*}
    \nu( \mathcal{B}(x,\rho r_k))\geq \frac{1}{K}r_k^D\exp\left( q\delta \left(\frac{-\log r_k}{\sup d}-2\right)\right).
\end{align*}
Thus, \begin{align*}
    \liminf_{r\to 0}\frac{\log \nu( \mathcal{B}(x, r))}{\log r}\leq D - \frac{q \delta }{\sup d}. 
\end{align*}
As the result is true for all $\delta>0$, \begin{align*}
    \liminf_{r\to 0}\frac{\log \nu( \mathcal{B}(x, r))}{\log r}\leq D.
\end{align*}
Moreover, $N^+= \chi(\Omega_0)$, which concludes the proof of the lemma.
\end{proof}

So, by \cite[Theorem 2.1.5]{MR2434246}, the bound of the lower pointwise dimension obtained in Lemma~\ref{lem17} gives the dimension of Hausdorff of $N$:
    \begin{align*}
D(0)\leq \dim_H(N)\leq&\dim_H(N^+)\leq D(0).\qedhere
    \end{align*}
\end{itemize}
\end{proof}

\subsection{Application to the example of the doubling map}\label{s1}

We examine the implications of Theorem~\ref{thm1} and Theorem~\ref{thm4} in the case of Example~\ref{ex1}.

\paragraph{Measure of the set of bad environments}~\\
 Let $\nu\in\mathcal{E}_T(\RZ)$ and $\lambda\in\mathbb{R}$. Then, \begin{align*}
    \mathbb{E}_{\nu}[\log m_\lambda(x)]=&\lambda-\mathbb{E}_\nu[\cos 2\pi x].
\end{align*}
Therefore, by Theorem~\ref{thm1}, \begin{align}\label{e17}
    \nu(N_\lambda)=1 \text{ if and only if }\lambda \leq \mathbb{E}_\nu[\cos 2\pi x].
\end{align}

As $\RZ$ is a compact set, $\inf$ and $\sup$ are reached. Then for all $\lambda\in\mathbb{R}$, if:\begin{itemize}
    \item $\lambda\leq \lambda_{\min}$, then for all $\nu\in\mathcal{P}_T(\RZ)$, $\nu(N_\lambda)=1$, 
    \item $\lambda_{\min}<\lambda\leq \lambda_{\max}$, there exits $\nu_0,\nu_1\in\mathcal{P}_T(\RZ)$ such that $\nu_0(N_\lambda)=1$ and $\nu_1(N_\lambda)=0$,
    \item $\lambda>\lambda_{\max}$, then for all $\nu\in\mathcal{P}_T(\RZ)$, $\nu(N_\lambda)=0$.
\end{itemize}

We determine the values of $\lambda_{\min}$ and $\lambda_{\max}$. For any $\nu\in\mathcal{P}_T(\RZ)$,  $\mathbb{E}_\nu[\cos 2\pi x]\in[-1,1] $. $\delta_0\in\mathcal{P}_T(\RZ)$ and $\mathbb{E}_{\delta_0}[\cos 2\pi x]=1$, then $\lambda_{\max}=1$. By \cite{MR1785392}, the atomic ergodic measure $\nu_0=\frac{1}{2}\delta_{\frac{1}{3}}+\frac{1}{2}\delta_{\frac{2}{3}}$ is the invariant measure that minimizes the quantity $\mathbb{E}_\nu[\cos 2\pi x]$ (see Figure \ref{figure2}, first graph) and 
$\mathbb{E}_{\nu_0}[\cos 2\pi x]=\frac{1}{2}\cos\frac{2\pi}{3}+\frac{1}{2}\cos\frac{4\pi}{3}=-\frac{1}{2}$. Thus $\lambda_{\min}=-\frac{1}{2}.$

If $\lambda>1=\lambda_{\max}$, then for any $x\in\RZ$, the laws $\text{Pois}(e^{\lambda-\cos2\pi x})$ stochastically dominate the law $\text{Pois}(e^{\lambda-1})$, which has an expectation $\lambda-1$ strictly greater than 1. Thus, by coupling, we can conclude that $N_\lambda=\emptyset$.

If $\lambda\leq 1=\lambda_{\max}$, then there exists $\nu\in\mathcal{E}_T(\RZ)$ such that $\nu(N_\lambda)=1$. In particular, $N_\lambda$ is non-empty, so by Example~\ref{ex2}, $N_\lambda$ is a dense $G_\delta$.
Lemma~\ref{lem6} allow us that for $\lambda<\lambda_{\min}=-\frac{1}{2}$, $N_\lambda=\RZ$. Indeed, let $\lambda<\lambda_{\min}=-\frac{1}{2}$. $\log m_\lambda$ is continuous, and for all $\nu\in\mathcal{P}_T(\RZ)$, \begin{align*}
        \mathbb{E}_{\nu}[\log m_{\lambda}(x)]\leq \lambda_{\min}-\lambda<0.
    \end{align*}
    By the semi-uniform ergodic theorem \cite[Theorem 1.9]{MR1734626}, there exists $\varepsilon>0$ and $N\in\mathbb{N}^*$ such that for all $x\in\RZ$ and $n\geq N$, 
    \begin{align*}
        \frac{1}{n}\sum_{k=0}^{n-1}\log m_\lambda(T^kx)<\varepsilon.
    \end{align*} By the contrapositive of Lemma~\ref{lem6}, $x\in N_\lambda$ for all $x\in\RZ$.
We summarize the different regimes of this example on the following graduated line:\newline
\begin{tikzpicture}[every text node part/.style={align=center},scale=1.16]
\draw[ -][dotted] (-0.2, 0) -- (4, 0) node[above][black]{$\lambda_{\min}$};
\draw[ -][dashed] (4, 0) -- (10, 0) node[above][black]{$\lambda_{\max}$};
\draw[ -] (10, 0) -- (12.25, 0) node[above][black]{$\lambda$};
\draw[ ->] (12.24, 0) -- (12.25, 0) ;
\foreach \x/\y in {0/-1.5,2/-1,4/-0.5,6/0,8/0.5,10/1,12/1.5} \draw (\x,-.10)node[below]{$\y$} -- (\x,0.10);
\node[draw][dotted] at (1.8,-1.1){$N_\lambda=\RZ$};
\node[draw][dashed] at (7,-1.2){$\text{There exists}~\nu_0,\nu_1\in\mathcal{E}_T(\RZ) \text{ such that }$\\ $\nu_0(N_\lambda)=0 \text{ and }\nu_1(N_\lambda)=1$};
\node[draw] at (11.2,-1.1){$N_\lambda=\emptyset$};
\draw[dashed] (10,0) circle (0.09);
\end{tikzpicture}

For $\lambda=\lambda_{min}=-\frac{1}{2}$, by Theorem~\ref{thm1}, $\nu(N_\lambda)=1$ for all $\nu\in\mathcal{P}_T(\RZ)$ but we don't know if $N_\lambda=\RZ$.

More generally, \cite{MR1785392} allows us to study $\underset{\nu\in\mathcal{P}_T(\rz)}{\sup}\mathbb{E}_\nu[\cos 2\pi (x-\omega)]$ for $\omega\in\RZ$, and therefore to study the different regimes if we take as the law of reproduction $\mu_{\lambda,w,.}\defeq x\in\RZ\mapsto\text{Pois}\left(e^{\lambda-\cos{(2\pi (x-\omega))}}\right)$ for $\omega\in\RZ$ and $\lambda\in\mathbb{R}$. The maximum of $\mathbb{E}_\nu[\cos 2\pi (x-\omega)]$ is reached on Sturm measure and is periodic for all $\omega\in\RZ$ outside a certain set of Hausdorff dimension zero.

\paragraph{Dimension of the set of bad environments}~\\
The Lebesgue measure on $\RZ$ is ergodic. Thus, Theorem~\ref{thm4} implies: \begin{align*}
        dim_H(N_\lambda)=
    \left\{\begin{array}{ll}
    &1\text{ if } \lambda\in \left(-\frac{1}{2},0\right], \\
    &\underset{\nu\in\mathcal{P}_T(\mathbb{X})}{\max}\left\{\frac{h_T(\nu)}{\log 2}:\int_{\mathbb{R}/\mathbb{Z}} \cos (2 \pi x)\, \mathrm{d}\nu(x)=\lambda\right\} \text{ if } \lambda\in[0,1).
    \end{array}
    \right.
    \end{align*}
    Moreover, $\lambda\in\left(-\frac{1}{2},1\right)\mapsto dim_H(N_\lambda)$ is continuous (see Lemma~\ref{lem16}) and non-increasing.  The graph of $dim_H(N_\lambda)$ as a function of $\lambda$ can be computed using the methods developed in \cite{PhysRevE.87.042913}.

    For $p\in(0,1)$, let $m(p)$ be the Bernoulli product of parameter $p$. By \cite{MR2334791}, $dim_H(m(p))=-\big(p\log_2(p)+(1-p)\log_2(1-p)\big)$. If $\lambda\in\left(0,1\right)$, for all parameters $p\in(0,1)$ close enough to $\frac{1}{2}$, the probability of survival of the process $(Z_n(x))_{n\in\mathbb{N}}$ is positive for $m(p)$-almost all $x\in\mathbb{X}$.

\section{Regularity of the invariant graph}

In this section, we study the continuity (Theorem~\ref{thm3}) and Hölder continuity (Theorem~\ref{thm2.6}) of the invariant graph in the supercritical case. 

\subsection{Continuity in the supercritical case}\label{section4.1}

We will now examine the uniformly supercritical case, i.e., for all $\nu\in\mathcal{E}_T(\mathbb{X})$,\newline $\mathbb{E}_\nu[\log m(x)] >0$. With some additional assumptions, the function $q$ is continuous. We already know that $q$ is lower semi-continuous by Lemma~\ref{lem2}. Now, we will express $q$ as an infimum of continuous functions to make $q$ upper semi-continuous and hence continuous. We start by proving that the function $q$ is upper-bounded by a constant strictly less than 1.

\begin{lemma}\label{lem8}
Assume (H\ref{hyp2}). There exists $K<1$ and $N\in\mathbb{N}$ such that for all $x\in\mathbb{X}$, the sequence $(\varphi^{(nN)}(x,K))_{n\in\mathbb{N}}$ is decreasing and converges to $q(x)$.
\end{lemma}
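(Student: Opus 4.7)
The plan is to find parameters $N\in\mathbb{N}$ and $K\in(0,1)$ for which $\varphi^{(N)}(x,K)<K$ uniformly in $x$, using the semi-uniform ergodic theorem applied to $\log m$; the decreasing property will then follow by induction from the composition rule $\varphi^{((n+1)N)}(x,K)=\varphi^{(N)}(x,\varphi^{(nN)}(T^Nx,K))$ of Proposition~\ref{prop5}, and a dominated-convergence argument on the probabilistic representation $\varphi^{(nN)}(x,K)=\mathbb{E}[K^{Z_{nN}(x)}]$ will identify the limit with $q$.

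Under (H\ref{hyp2})(d), Proposition~\ref{prop2} gives continuity of $\partial_s\varphi$ on $\mathbb{X}\times[0,1]$ and of $m$ on $\mathbb{X}$; a chain-rule induction extends this to $\partial_s\varphi^{(N)}$ for every $N$. Corollary~\ref{cor1} specialised at $s=1$ then reads $\log\partial_s\varphi^{(N)}(x,1)=\sum_{k=0}^{N-1}\log m(T^kx)$. Since we are in the uniformly supercritical case $\lambda_{\min}>0$, the semi-uniform ergodic theorem \cite[Theorem 1.9]{MR1734626} applied to $\log m$ produces $\varepsilon>0$ and $N\in\mathbb{N}$ such that $\partial_s\varphi^{(N)}(x,1)\geq e^{2N\varepsilon}$ uniformly in $x$. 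Continuity of $\partial_s\varphi^{(N)}$ on the compact $\mathbb{X}\times[0,1]$ then yields $K\in(0,1)$ with $\partial_s\varphi^{(N)}(x,s)\geq e^{N\varepsilon}>1$ throughout $\mathbb{X}\times[K,1]$, so that integration on $[K,1]$ gives
\begin{align*}
1-\varphi^{(N)}(x,K)\geq e^{N\varepsilon}(1-K),
\end{align*}
and hence $\varphi^{(N)}(x,K)<K$ uniformly in $x\in\mathbb{X}$.

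Monotonicity of $\varphi^{(N)}(x,\cdot)$ together with this base inequality yields by induction on $n$ that $\varphi^{(nN)}(y,K)\leq K$ for every $y$, and hence that $\alpha_n(x)\defeq\varphi^{(nN)}(x,K)$ is non-increasing in $n$. Since $\alpha_n(x)\geq\varphi^{(nN)}(x,0)\nearrow q(x)$, the sequence converges to some $\ell(x)\in[q(x),K]$. To identify $\ell$ with $q$, I would use $\alpha_n(x)=\mathbb{E}[K^{Z_{nN}(x)}]$: on the extinction event $K^{Z_{nN}(x)}\to 1$, contributing $q(x)$; on the survival event the growth-extinction dichotomy for Galton-Watson processes in varying environments (see \cite{MR4094390}) ensures $Z_n(x)\to+\infty$ almost surely, so $K^{Z_{nN}(x)}\to 0$ and the contribution vanishes. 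Dominated convergence then gives $\alpha_n(x)\to q(x)$. The main obstacle is this last step: establishing the growth-extinction dichotomy under (H\ref{hyp2}), which should rely on the uniform bounds $\sup_x\mu_x(\{0\}),\sup_x\mu_x(\{1\})<1$ produced by continuity and compactness in order to rule out $Z_n$ lingering at finite positive values on the survival event.
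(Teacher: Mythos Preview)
Your approach matches the paper's almost exactly: the semi-uniform ergodic theorem applied to $\log m$ to obtain $N$, uniform continuity of $\partial_s\varphi^{(N)}$ to obtain $K$, and the induction via Proposition~\ref{prop5} to get monotonicity are all handled the same way.

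For the limit identification---the step you flag as the main obstacle---the paper does not invoke Kersting. It instead cites \cite[Theorem~5]{MR0298780} (Athreya--Karlin), which states that for any sequence of reproduction laws the generating functions $\varphi^{(n)}(x,\cdot)$ converge pointwise on $[0,1]$ to some limit $g$, and that $g$ is constant on $[0,1)$ whenever $\sup_y \mu_y(\{1\})<1$. Hence $\varphi^{(nN)}(x,K)-\varphi^{(nN)}(x,0)\to 0$, which is exactly your dichotomy phrased in terms of generating functions. The uniform bound $\sup_x\mu_x(\{1\})<1$ follows, as you say, from $\mu_x\neq\delta_1$, continuity of $\mu$, and compactness of $\mathbb{X}$; the bound on $\mu_x(\{0\})$ is not needed here. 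This reference is the right one under (H\ref{hyp2}) because it requires no moment conditions beyond what you already have, whereas Kersting's framework in \cite{MR4094390} rests on a second-moment assumption that (H\ref{hyp2}) does not provide.
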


\begin{proof}
For all $\nu\in\mathcal{P}_T(\mathbb{X})$, 
\begin{align*}
\int_{\mathbb{X}} \log m(x) \,\mathrm{d}\nu(x)>0.
\end{align*}
Thus, there exists $\varepsilon>0$ such that for all $\nu\in\mathcal{P}_T(\mathbb{X})$, 
\begin{align*}
\int_{\mathbb{X}} \log m(x) \,\mathrm{d}\nu(x)>\varepsilon
\end{align*}
because of the compactness of $\mathcal{P}_T(\mathbb{X})$ (since $\mathbb{X}$ is compact), and because $m$ is continuous by Proposition~\ref{prop2}. 
By the semi-uniform ergodic theorem \cite[Theorem 1.9]{MR1734626}, there exists $N\in\mathbb{N}$ such that for all $n\geq N$ and $x\in\mathbb{X}$,
\begin{align*}
    \frac{1}{n}\sum_{k=0}^{n-1}\log m(T^k x)>\frac{\varepsilon}{2}.
\end{align*}
In particular, for all $x\in\mathbb{X}$,
\begin{align*}
    \partial_s\varphi^{(N)}(x,1)>\exp\left(\frac{\varepsilon N}{2}\right)>1.
\end{align*}
By induction and Proposition~\ref{prop2},  $\partial_s\varphi^{(N)}$ is uniformly continuous on the compact $\mathbb{X}\times [0,1]$.
Thus, there exists $K<1$ such that for all $x\in\mathbb{X}$ and $t\in[K,1]$, 
\begin{align*}
    \partial_s\varphi^{(N)}(x,t)>1.
\end{align*}
Let $x\in\mathbb{X}$, $\varphi^{(N)}(x,1)=1$, so $\varphi^{(N)}(x,K)\leq K$.
Thus, for all $n\in\mathbb{N}$,
\begin{align*}
    \varphi^{((n+1)N)}(x,K)&=\varphi^{(Nn)}(x,\varphi^{(N)}(T^{nN}x,K)) \text{ by Proposition~\ref{prop5}}\\
    &\leq \varphi^{(Nn)}(x,K)   
\end{align*}
because $s\in[0,1]\mapsto\varphi^{(nN)}(x,s)$ is increasing and $\varphi(T^{Nn}x,K)\leq K$.
 
Thus, the sequence $(\varphi^{(nN)}(x,K))_{n\in\mathbb{N}}$ is decreasing. Moreover, for all $n\in\mathbb{N}$,\newline $\varphi^{(nN)}(x,0)\leq\varphi^{(nN)}(x,K)$, so $q(x)\leq K$.

For $n\in\mathbb{N}$, $\varphi^{(nN)}(x,0)\leq\varphi^{(Nn)}(x,K)$ because $s\in[0,1]\mapsto\varphi^{(nN)}(x,s)$ is increasing. Since $\varphi^{(nN)}(x,0)$ converges to $q(x)$, we only need to show that $\varphi^{(nN)}(x,K)-\varphi^{(nN)}(x,0)\to0$. 

We will prove this using \cite[Theorem 5]{MR0298780}. This theorem states that given a sequence of reproduction laws, the probability generating function associated with the population size at the $n$th generation converges to a function $g$ for all $s\in[0,1]$. Moreover, 
since there exists $c<1$ such that for all $y\in\mathbb{X}$, $\mu_y(1)<c$ (because $\mu$ is continuous, and for all $y\in\mathbb{X}$, $\mu_y(1)<1$), then for all $s\in [0,1)$, $g(s)=g(0)$. Therefore, $\varphi^{(nN)}(x,K)-\varphi^{(nN)}(x,0)\underset{n\to +\infty}{\longrightarrow}0$.
\end{proof}
Lemma~\ref{lem8} allows us to prove Theorem~\ref{thm3}.
\begin{proof}[Proof of Theorem~\ref{thm3}]
By Lemma~\ref{lem2}, $q$ is lower semi-continuous. According to \newline Lemma~\ref{lem8}, $q(x)=\underset{n\in\mathbb{N}}{\inf}\varphi^{(Nn)}(x,K)$, and for all $n\in\mathbb{N}$, $x\in\mathbb{X}\mapsto\varphi^{(n)}(x,K)$ is continuous. Thus, $q$ is upper semi-continuous, and consequently, $q$ is continuous.
\end{proof}

By Lemma~\ref{lem8}, for all $a\in[0,1)$, $(\varphi^{(n)}(.,a))_{n\in\mathbb{N}}$ converges pointwise to $q$. Corollary~\ref{cor6} shows that the convergence is uniform.

\begin{corollary}\label{cor6}
    Assume (H\ref{hyp2}). For each $a\in[0,1)$, the sequence of functions \newline $(\varphi^{(n)}(.,a))_{n\in\mathbb{N}}$ converges uniformly to $q$.
\end{corollary}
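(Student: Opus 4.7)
The plan is to combine Lemma~\ref{lem8} with Dini's theorem to obtain uniform convergence along the subsequence $(nN)_{n \in \mathbb{N}}$, bootstrap from the specific initial values $0$ and $K$ to an arbitrary $a \in [0,1)$ via monotonicity of $\varphi^{(n)}(x, \cdot)$ in the second variable, and then fill in the remaining indices using the cocycle relation of Proposition~\ref{prop5}.

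Fix $a \in [0,1)$ and set $K_0 \defeq \max(K, a) \in [K, 1)$, where $K$ and $N$ are the constants from Lemma~\ref{lem8}. The proof of that lemma establishes $\partial_s \varphi^{(N)}(x, t) > 1$ for all $x \in \mathbb{X}$ and $t \in [K,1]$; combined with $\varphi^{(N)}(x, 1) = 1$, the same computation yields $\varphi^{(N)}(x, K_0) \leq K_0$, so $(\varphi^{(nN)}(x, K_0))_{n \in \mathbb{N}}$ is decreasing, and the appeal to \cite[Theorem 5]{MR0298780} in the proof of Lemma~\ref{lem8} equally shows pointwise convergence to $q(x)$ (the argument only uses that the initial value lies in $[0,1)$). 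Since $\mathbb{X}$ is compact, the functions $\varphi^{(nN)}(\cdot, K_0)$ are continuous (Corollary~\ref{cor5}), the limit $q$ is continuous (Theorem~\ref{thm3}), and the sequence is monotone, so Dini's theorem gives $\|\varphi^{(nN)}(\cdot, K_0) - q\|_\infty \to 0$. The same Dini argument applied to the increasing sequence $\varphi^{(nN)}(\cdot, 0)$ yields $\|\varphi^{(nN)}(\cdot, 0) - q\|_\infty \to 0$. Since $\varphi^{(nN)}(x, 0) \leq \varphi^{(nN)}(x, a) \leq \varphi^{(nN)}(x, K_0)$ by monotonicity in the second variable, a sandwich gives $\|\varphi^{(nN)}(\cdot, a) - q\|_\infty \to 0$.

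To pass from the subsequence $nN$ to all integers, write $n = kN + r$ with $0 \leq r < N$. By Proposition~\ref{prop5} and Lemma~\ref{lem2},
\begin{align*}
\varphi^{(n)}(x, a) - q(x) = \varphi^{(r)}(x, \varphi^{(kN)}(T^r x, a)) - \varphi^{(r)}(x, q(T^r x)).
\end{align*}
Each $\varphi^{(r)}$ is uniformly continuous on the compact product $\mathbb{X} \times [0, 1]$, and only finitely many values of $r$ occur, so a common modulus of continuity $\omega$ in the second variable satisfies $\|\varphi^{(n)}(\cdot, a) - q\|_\infty \leq \omega(\|\varphi^{(kN)}(\cdot, a) - q\|_\infty)$, which tends to $0$ as $n \to \infty$. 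The main point requiring care is the observation that the construction in Lemma~\ref{lem8} applies with $K_0$ in place of $K$, which in turn makes Dini's theorem available on a monotone sequence; once this is checked, the sandwich and the cocycle bootstrap are routine.
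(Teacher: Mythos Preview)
Your proof is correct and follows essentially the same approach as the paper's: both apply Dini's theorem to the monotone sequences starting from $0$ and from an enlarged threshold $\tilde K\geq K$, use a sandwich argument for the arbitrary initial value $a$, and pass from the subsequence $(nN)$ to the full sequence via the cocycle relation together with uniform continuity of the finitely many $\varphi^{(r)}$. The only cosmetic difference is that the paper packages the last step through an operator $\Phi(f)(x)=\varphi(x,f(Tx))$ and squeezes after extending to the full sequence, whereas you squeeze first along $(nN)$ and then extend; the content is the same.
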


\begin{proof}
We define a function $\Phi$ as:
\fonction{\Phi}{\mathcal{C}(\mathbb{X},[0,1])}{\mathcal{C}(\mathbb{X},[0,1])}{f}{\left(x\in\mathbb{X}\mapsto\varphi(x,f(Tx))\right)}
 $\Phi$ is uniformly continuous. Indeed, let $\varepsilon>0$. The function $(x,s)\in\mathbb{X}\times[0,1]\mapsto\varphi(x,s)$ is uniformly continuous. Hence, there exists $\eta>0$ such that for all $x\in\mathbb{X}$ and $s,t\in[0,1]$ with $|s-t|\leq \eta$, we have $|\varphi(x,s)-\varphi(x,t)|\leq\varepsilon$. Let $f,g\in\mathcal{C}(\mathbb{X},[0,1])$ such that $\lVert f-g \rVert_{\infty}\leq \eta$. Then, for all $x\in\mathbb{X}$, $
|f(Tx)-g(Tx)|\leq \eta$. 
Thus, for all $x\in\mathbb{X}$, $|\varphi(x,f(Tx))-\varphi(x,g(Tx))|\leq \varepsilon$, i.e.\ $\lVert \Phi(f)-\Phi(g) \rVert_{\infty}\leq \varepsilon$. Therefore, $\Phi$ is uniformly continuous.

Let $a\in [0,1)$, $0<K<1$ defined as in Lemma~\ref{lem8}, and $\Tilde{K}\in[K,1)$ such that $\Tilde{K}>a$. The sequence of continuous and increasing functions $(\varphi^{(n)}(.,0))_{n\in\mathbb{N}}$ (respectively the sequence of continuous and decreasing functions $(\varphi^{(nN)}(.,\Tilde{K}))_{n\in\mathbb{N}}$) converges pointwise to the continuous (by Theorem~\ref{thm3}) function $q$. Moreover, since $\mathbb{X}$ is compact, by Dini's theorem, the sequences $(\varphi^{(n)}(.,0))_{n\in\mathbb{N}}$ and $(\varphi^{(nN)}(.,\Tilde{K}))_{n\in\mathbb{N}}$ converge uniformly to $q$.
The uniform convergence of $(\varphi^{(nN)}(.,\Tilde{K}))_{n\in\mathbb{N}}$ to $q$ can be expressed as:
\begin{align*}  
    \Phi^{nN}(\Tilde{\mathbb{K}})\underset{n\to +\infty}{\longrightarrow} q
\end{align*}
(where $\Tilde{\mathbb{K}}$ is the constant function equal to $\Tilde{K}$).
For every $k\in\llbracket0,N-1\rrbracket$, by the continuity of $\Phi$ and by Lemma~\ref{lem2}:
\begin{align*}  
    \Phi^{nN+k}(\Tilde{\mathbb{K}})\underset{n\to +\infty}{\longrightarrow} \Phi^k(q)=q .
\end{align*}
Thus, $(\varphi^{(n)}(.,\Tilde{K}))_{n\in\mathbb{N}}$ converges uniformly to $q$.
By squeezing, $(\varphi^{(n)}(.,a))_{n\in\mathbb{N}}$ converge uniformly to $q$.
\end{proof}

\subsection{Hölder regularity in the supercritical case}\label{section4.2}

We will now see in the case where the invariant graph is continuous whether the Hölder regularity of the reproduction function $\mu$ is preserved by the invariant graph $q$.

Assume (H\ref{hyp2}) and that $q(x)>0$ for all $x\in\mathbb{X}$. By Proposition~\ref{prop2}, $m$ is continuous, and by Theorem~\ref{thm3}, $q$ is continuous. Denote $\beta\defeq\underset{x\in\mathbb{X}}{\sup}\, m(x)< +\infty$, $K\defeq\underset{x\in\mathbb{X}}{\sup}\,q(x)<1$ and $\kappa\defeq\underset{x\in\mathbb{X}}{\inf}\,q(x)>0$.

\subsubsection{Lyapunov exponent in the fibre}

In this section, we will study the Lyapunov exponent in the fibre \begin{align*}
    \lambda_F=\lim_{n\to\infty}\frac{1}{n}\sup_{x\in\mathbb{X}}\log(\partial_s\varphi^{(n)}(x,q(T^n x))).
\end{align*} We start by showing in Lemma~\ref{lem9} and Lemma~\ref{lem10} that $\lambda_F$ is well defined:

\begin{lemma}\label{lem9}
   Assume (H\ref{hyp2}) and that $q(x)>0$ for all $x\in\mathbb{X}$. Then $x\in\mathbb{X}\mapsto F(x)=\log \partial_s\varphi(x,q(Tx))$ takes finite values and is continuous. 
\end{lemma}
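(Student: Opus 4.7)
The plan is to write $F(x) = \log G(x)$ with $G(x) \defeq \partial_s\varphi(x, q(Tx))$, and to prove two things separately: (i) $G$ is continuous on $\mathbb{X}$, and (ii) $G$ takes values in a compact subset of $(0, +\infty)$. Granting these, continuity of $\log$ on $(0, +\infty)$ delivers that $F$ is continuous and finite-valued. I expect the only mildly nontrivial step to be the pointwise positivity of $G$; everything else is a direct composition of previously established continuity statements.

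For continuity of $G$, I would chain three facts. Hypothesis (H\ref{hyp2})(d) supplies exactly the stochastic domination needed by Proposition~\ref{prop2}, which therefore yields continuity of $\partial_s\varphi$ on $\mathbb{X} \times [0,1]$. The hypothesis (H\ref{hyp2}) is precisely what Theorem~\ref{thm3} demands, so $q$ is continuous on $\mathbb{X}$. Combined with continuity of $T$, the map $x \mapsto (x, q(Tx))$ from $\mathbb{X}$ into $\mathbb{X} \times [0,1]$ is continuous, so $G$ is continuous by composition. An upper bound comes for free: $\partial_s\varphi$ is continuous on the compact $\mathbb{X} \times [0,1]$, and it is real-valued there because the stochastic domination by $\tilde{\mu}$ (which has a first moment) forces $m(x) \leq \sum_{k\geq 1} k\tilde{\mu}(k) < +\infty$ uniformly in $x$.

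For pointwise positivity, fix $x \in \mathbb{X}$. By (H\ref{hyp2})(b), $\mu_x \neq \delta_0$, so there exists some $k^\star \geq 1$ with $\mu_x(k^\star) > 0$. By the standing assumption $q > 0$ on $\mathbb{X}$, we have $q(Tx) > 0$, hence
\[
G(x) = \sum_{j \geq 1} j\,\mu_x(j)\, q(Tx)^{j-1} \geq k^\star \mu_x(k^\star)\, q(Tx)^{k^\star - 1} > 0.
\]
Since $G$ is continuous and strictly positive on the compact set $\mathbb{X}$, it is bounded below by a strictly positive constant and above by a finite one. Applying $\log$ gives that $F$ is continuous with values in a bounded real interval, proving the lemma. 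The one step that genuinely uses the present hypotheses rather than the prior propositions is the positivity argument above; the rest is bookkeeping on top of Proposition~\ref{prop2} and Theorem~\ref{thm3}.
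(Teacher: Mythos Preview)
Your proof is correct and follows essentially the same approach as the paper: continuity of $G$ via Proposition~\ref{prop2} and Theorem~\ref{thm3}, then positivity from the series expansion using $\mu_x\neq\delta_0$ and $q(Tx)>0$. The only cosmetic difference is that the paper lower-bounds the whole series by $\sum_{k\geq1}\mu_x(k)\kappa^{k-1}$ using $\kappa=\inf_{x}q(x)>0$, whereas you isolate a single nonzero term; both arguments are equivalent once you invoke compactness.
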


\begin{proof}
    $q$ is continuous, and for all $x\in\mathbb{X}$, $0<\kappa<q(x)<K<1$. Then by Proposition~\ref{prop2}, $x\in\mathbb{X}\mapsto \partial_s\varphi(x,q(Tx))$ is continuous. All we have to do is check that $\partial_s\varphi(x,q(Tx))>0$ for all $x\in\mathbb{X}$. Let $x\in\mathbb{X}$.
    \begin{align*}
         \partial_s \varphi(x,q(Tx))
        &= \sum_{k=1}^{+\infty} k \mu_x(k) q(Tx)^{k-1}\\
        &\geq \sum_{k=1}^{+\infty } \mu_x(k) \kappa^{k-1}\\
        &>0 \text{ because } \mu_x(0)<1.\qedhere
    \end{align*}
\end{proof}

Lemma \ref{lem10} proves that $\lambda_F$ is well defined using Fekete's subadditive lemma.

\begin{lemma}\label{lem10}
    Assume (H\ref{hyp2}) and that $q(x)>0$ for all $x\in\mathbb{X}$. Then\newline $\big(\frac{1}{n}\underset{x\in\mathbb{X}}{\sup}\log(\partial_s\varphi^{(n)}(x,q(T^n x)))\big)_{n\in\mathbb{N}}$ has a finite limit.
\end{lemma}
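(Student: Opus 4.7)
The plan is to apply Fekete's subadditive lemma to the sequence
\begin{align*}
a_n \defeq \sup_{x\in\mathbb{X}}\log\partial_s\varphi^{(n)}(x,q(T^nx)),
\end{align*}
so the two things to verify are that $(a_n)$ is subadditive and that $(a_n/n)$ is bounded below (equivalently, the limit is finite).

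For subadditivity I would specialize Lemma~\ref{lem3} at $s=q(T^{n+k}x)$, which gives
\begin{align*}
\log\partial_s\varphi^{(n+k)}(x,q(T^{n+k}x))
&= \log\partial_s\varphi^{(k)}(T^nx,q(T^{n+k}x))\\
&\quad+\log\partial_s\varphi^{(n)}(x,\varphi^{(k)}(T^nx,q(T^{n+k}x))).
\end{align*}
The crucial simplification comes from Lemma~\ref{lem2}, which yields $\varphi^{(k)}(T^nx,q(T^{n+k}x))=q(T^nx)$. Hence the second summand equals $\log\partial_s\varphi^{(n)}(x,q(T^nx))$, bounded above by $a_n$. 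Setting $y=T^nx$, the first summand is $\log\partial_s\varphi^{(k)}(y,q(T^ky))$ and is bounded above by $a_k$. Taking the supremum over $x\in\mathbb{X}$ gives $a_{n+k}\le a_n+a_k$. This identification of the right $s$-argument with $q(T^n x)$ via Lemma~\ref{lem2} is the only subtle step; everything else is just reading off Lemma~\ref{lem3}.

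For finiteness I would use Lemma~\ref{lem7}, which rewrites $\log\partial_s\varphi^{(n)}(x,q(T^nx))=\sum_{k=0}^{n-1}F(T^kx)$ as a Birkhoff sum of the function $F$ from Definition~\ref{def1}. By Lemma~\ref{lem9}, $F$ is continuous on the compact space $\mathbb{X}$, hence bounded, say $\lVert F\rVert_\infty\le M<+\infty$. This gives $|a_n|\le nM$, so $a_n/n\in[-M,M]$. Combined with subadditivity, Fekete's lemma implies that $a_n/n\to\inf_{n\ge 1}a_n/n$, a finite limit, which is what we wanted.
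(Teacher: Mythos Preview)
Your proof is correct and follows essentially the same approach as the paper: Fekete's lemma applied to $a_n$, with subadditivity coming from the cocycle/Birkhoff-sum structure and finiteness from the continuity of $F$ (Lemma~\ref{lem9}). The only cosmetic difference is that the paper invokes Lemma~\ref{lem7} at the outset to rewrite $a_n=\sup_x\sum_{k=0}^{n-1}F(T^kx)$ and then reads off subadditivity directly from the Birkhoff sum, whereas you derive subadditivity from Lemma~\ref{lem3} together with Lemma~\ref{lem2}---but this is exactly the computation underlying Lemma~\ref{lem7}, so the two arguments coincide.
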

\begin{proof}
Let $x\in\mathbb{X}$ and $n,m\in\mathbb{N}$, by Lemma~\ref{lem7}:
\begin{align*}
    \frac{1}{n}\log(\partial_s\varphi^{(n)}(x,q(T^n x)))=\frac{1}{n}\sum_{k=0}^{n-1}F(T^kx).
\end{align*}
And,
\begin{align*}
    \sup_{x\in\mathbb{X}}\sum_{k=0}^{n+m-1}F(T^kx)&\leq \sup_{x\in\mathbb{X}}\sum_{k=0}^{n-1}F(T^kx)+\sup_{x\in\mathbb{X}}\sum_{k=n}^{n+m-1}F(T^kx)\\
    &\leq \sup_{x\in\mathbb{X}}\sum_{k=0}^{n-1}F(T^kx)+\sup_{x\in\mathbb{X}}\sum_{k=0}^{m-1}F(T^kx).
\end{align*}
Then $\left(\underset{x\in\mathbb{X}}{\sup}\underset{k=0}{\overset{n-1}{\sum}}F(T^kx)\right)_{n\in\mathbb{N}}$ is subadditive, so by Fekete's subadditive lemma,\newline $\frac{1}{n}\underset{x\in\mathbb{X}}{\sup}\log(\partial_s\varphi^{(n)}(x,q(T^n x)))$ has a limit in $\{-\infty\}\cup\mathbb{R}$.
Moreover, $F$ is continuous on $\mathbb{X}$ (by Lemma~\ref{lem9}) so the limit is finite.
\end{proof}

Proposition~\ref{prop4} and Lemma~\ref{lem11} give other equivalent definitions of the Lyapunov exponent in the fibre.

\begin{prop}\label{prop4}
    Assume (H\ref{hyp2}) and that $q(x)>0$ for all $x\in\mathbb{X}$. Then for all $0<a<1$,
    \begin{align*}
        \underset{n\to\infty}{\lim} \underset{x\in\mathbb{X}}{\sup}\frac{1}{n}\log \partial_s \varphi^{(n)}(x,a)=\lambda_F.
    \end{align*}
\end{prop}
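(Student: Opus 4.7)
The plan is to use Corollary~\ref{cor1} to express both $\log \partial_s \varphi^{(n)}(x,a)$ and $\log \partial_s \varphi^{(n)}(x,q(T^n x))$ as telescoped sums over the orbit of $x$, then exploit the uniform convergence of $\varphi^{(k)}(\cdot,a)$ to $q$ from Corollary~\ref{cor6}, together with uniform continuity of $\log \partial_s \varphi$ on a compact set strictly inside $\mathbb{X} \times (0,1)$, to show that the two sums differ by $o(n)$ uniformly in $x$.

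\textbf{Step 1: Common sum representation.} Apply Corollary~\ref{cor1} to get
\begin{align*}
\log \partial_s \varphi^{(n)}(x,a) = \sum_{k=0}^{n-1} \log \partial_s \varphi\bigl(T^{n-k-1}x,\,\varphi^{(k)}(T^{n-k}x,a)\bigr),
\end{align*}
and apply it again with $s = q(T^n x)$, using Lemma~\ref{lem2} (so that $\varphi^{(k)}(T^{n-k}x,q(T^n x)) = q(T^{n-k}x)$) to obtain
\begin{align*}
\log \partial_s \varphi^{(n)}(x,q(T^n x)) = \sum_{k=0}^{n-1} \log \partial_s \varphi\bigl(T^{n-k-1}x,\, q(T^{n-k}x)\bigr).
\end{align*}

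\textbf{Step 2: Bulk terms are uniformly small.} Fix $\varepsilon \in (0,\min(\kappa/2,(1-K)/2))$. By Corollary~\ref{cor6}, there exists $N_0$ such that $|\varphi^{(k)}(y,a) - q(y)| < \varepsilon$ for every $y \in \mathbb{X}$ and $k \geq N_0$. Then for $k \geq N_0$, both $\varphi^{(k)}(T^{n-k}x,a)$ and $q(T^{n-k}x)$ lie in the compact $I := [\kappa/2,(1+K)/2] \subset (0,1)$. On $\mathbb{X} \times I$, the function $\partial_s \varphi$ is continuous by Proposition~\ref{prop2} and strictly positive (since $\mu_y \neq \delta_0$ together with $s > 0$ force $\partial_s \varphi(y,s) > 0$), so $\log \partial_s \varphi$ is uniformly continuous, with some modulus of continuity $\omega$. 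The $k$-th bulk term therefore contributes at most $\omega(\varepsilon)$ to the difference.

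\textbf{Step 3: Edge terms are uniformly bounded.} For $0 \leq k < N_0$, the argument $\varphi^{(k)}(T^{n-k}x,a)$ is bounded below by $\inf_{y\in\mathbb{X}} \varphi^{(k)}(y,a) > 0$ thanks to Corollary~\ref{cor5} (continuity on the compact $\mathbb{X}$) and the fact that $a > 0$ implies $\varphi^{(k)}(y,a) > 0$ pointwise; similarly $q(T^{n-k}x) \geq \kappa > 0$. Thus both arguments lie above a common $s_0 = s_0(k) > 0$, giving $\partial_s \varphi(T^{n-k-1}x,\cdot) \geq \inf_{z\in\mathbb{X}} \partial_s \varphi(z,s_0) > 0$ (again by Proposition~\ref{prop2} and compactness), while the upper bound is $\leq \beta$. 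Hence each edge $|\log \partial_s \varphi(\cdot,\cdot)|$ is bounded by a constant $C_k$ independent of $x$ and $n$.

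\textbf{Step 4: Conclusion.} Summing, for every $n \geq N_0$,
\begin{align*}
\sup_{x \in \mathbb{X}} \bigl|\log \partial_s \varphi^{(n)}(x,a) - \log \partial_s \varphi^{(n)}(x,q(T^n x))\bigr| \leq 2\sum_{k=0}^{N_0-1} C_k + (n-N_0)\,\omega(\varepsilon).
\end{align*}
Dividing by $n$ and letting $n \to \infty$ then $\varepsilon \to 0$ yields $\tfrac{1}{n}\sup_x|\cdots| \to 0$. Since $|\sup_x f - \sup_x g| \leq \sup_x|f-g|$ and $\tfrac{1}{n}\sup_x \log \partial_s \varphi^{(n)}(x,q(T^n x)) \to \lambda_F$ by Lemma~\ref{lem10}, we conclude $\tfrac{1}{n}\sup_x \log \partial_s \varphi^{(n)}(x,a) \to \lambda_F$.

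\textbf{Main obstacle.} The delicate point is securing a common modulus of continuity for $\log \partial_s \varphi$ on a compact set lying strictly inside $\mathbb{X} \times (0,1)$: this requires both $\kappa > 0$ (the hypothesis $q > 0$ uniformly) and $K < 1$ (from Lemma~\ref{lem8}), along with the uniform strict positivity of $\partial_s \varphi$ inherited from $\mu_y \neq \delta_0$. A subsidiary difficulty is controlling the initial terms of the sum, where the iterates $\varphi^{(k)}(\cdot,a)$ have not yet stabilised near $q$; here the assumption $a > 0$ is essential to prevent $\log \partial_s \varphi$ from blowing up.
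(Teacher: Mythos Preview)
Your proof is correct and follows essentially the same strategy as the paper: both use Corollary~\ref{cor1} to write $\log \partial_s \varphi^{(n)}$ as a sum along the orbit, invoke Corollary~\ref{cor6} to make $\varphi^{(k)}(\cdot,a)$ uniformly close to $q$ for large $k$, and then use uniform continuity of $\log \partial_s \varphi$ on a compact avoiding $s=0$ to bound the bulk terms by $o(n)$ while the finitely many initial terms contribute $O(1)$. The only cosmetic difference is that the paper first peels off an $m$-block via Lemma~\ref{lem3} (so its ``edge'' term is $\log \partial_s \varphi^{(m)}$ treated as a single bounded quantity), whereas you split the Corollary~\ref{cor1} sum directly into $k<N_0$ and $k\geq N_0$; your $N_0$ plays exactly the role of the paper's $m$.
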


\begin{proof}
Let $\varepsilon>0$ and $\delta\defeq\min(\frac{\kappa}{2},a)$. The function $\log \partial_s\varphi$ is uniformly continuous on the compact $\mathbb{X}\times\left[\delta,1\right]$. Thus, there exists $\eta>0$ such that for all $x\in\mathbb{X}$ and $s,t\in\left[\delta,1\right]$ with $|s-t|\leq \eta$, $|\log \partial_s\varphi(x,s)-\log \partial_s\varphi(x,t)|\leq \varepsilon$.\\
By Corollary~\ref{cor6}, there exists $m\in\mathbb{N}$ such that $\underset{x\in\mathbb{X}}{\sup}|q(x)-\varphi^{(m+k)}(x,a)|<\min(\eta,\delta)$ for all $k\in\mathbb{N}$.\\
Let $n\in\mathbb{N}$ and $x\in\mathbb{X}$. Using Lemma~\ref{lem3} and Lemma~\ref{lem2},
\begin{align}
    \log \partial_s \varphi^{(n+m)}(x,a)
    &=\log \partial_s \varphi^{(m)}(T^nx,a)+\log \partial_s \varphi^{(n)}(x,\varphi^{(m)}(T^nx,a))\label{e5}
\end{align}
And,
\begin{align}
\log \partial_s \varphi^{(n+m)}&(x,q(T^{n+m}x))\nonumber\\
=&\log \partial_s \varphi^{(m)}(T^nx,q(T^{n+m}x))+\log \partial_s \varphi^{(n)}(x,\varphi^{(m)}(T^nx,q(T^{n+m}x)))\nonumber\\
=&\log \partial_s \varphi^{(m)}(T^nx,q(T^{n+m}x))+\log \partial_s \varphi^{(n)}(x,q(T^{n}x)).\label{e6}
\end{align}
There exists $C>0$ such that the continuous function $(x,s)\in\mathbb{X}\times\left[\delta,1\right]\mapsto\log \partial_s \varphi^{(m)}(x,s)$ is bounded by $C$.

Using Corollary~\ref{cor1}, Lemma~\ref{lem2}, and by Proposition~\ref{prop5},
\begin{align}
    \Big|\log \partial_s \varphi^{(n)}(x,\varphi^{(m)}&(T^nx,a))-\log \partial_s \varphi^{(n)}(x,q(T^{n}x))\Big|\nonumber\\
&= \sum_{k=0}^{n-1}\Big|\log \partial_s\varphi(T^{k}x,\varphi^{(n-1-k)}(T^{k+1}x,\varphi^{(m)}(T^nx,a)))\nonumber\\
&\hspace{4em}-\log \partial_s\varphi(T^{k}x,\varphi^{(n-1-k)}(T^{k+1}x,q(T^{n}x)))\Big|\nonumber\\
    &= \sum_{k=0}^{n-1}\Big|\log \partial_s\varphi(T^{k}x,\varphi^{(n+m-1-k)}(T^{k+1}x,a))\nonumber\\ 
    &\hspace{4em}-\log \partial_s\varphi(T^{k}x,q(T^{k+1}x))\Big|\nonumber\\
    &\leq n\varepsilon \text{ since } | \varphi^{(n+m-1-k)}(T^{k+1}x,a)-q(T^{k+1}x)|\leq \eta\label{e7}\\ &\hspace{4em} \text{ and } \varphi^{(n+m-1-k)}(T^{k+1}x,a),q(T^{k+1}x)\in [\delta,1].\nonumber
\end{align}
Hence, by Equality~\ref{e5}, Equality~\ref{e6}, and Inequality~\ref{e7},
\begin{align}
    \Big|\limsup_{n\to +\infty}&\frac{1}{n}\sup_{x\in\mathbb{X}} \log \partial_s \varphi^{(n)}(x,a)-\lambda_F\Big|\nonumber\\
    &=\Big|\limsup_{n\to +\infty}\frac{1}{n}\sup_{x\in\mathbb{X}} \log \partial_s \varphi^{(n)}(x,a)-\lim_{n\to +\infty}\frac{1}{n}\sup_{x\in\mathbb{X}} \log \partial_s \varphi^{(n)}(x,q(T^nx)\Big|\nonumber\\
    &\leq\limsup_{n\to +\infty}\frac{1}{n+m}\Big|\sup_{x\in\mathbb{X}} \log \partial_s\varphi^{(n+m)}(x,a)-\sup_{x\in\mathbb{X}}\log \partial_s \varphi^{(n+m)}(x,q(T^{n+m}x))\Big|\nonumber\\
&=\limsup_{n\to +\infty}\frac{1}{n+m}\Big|\sup_{x\in\mathbb{X}}\big(\log \partial_s \varphi^{(m)}(T^nx,a)+\log \partial_s \varphi^{(n)}(x,\varphi^{(m)}(T^nx,a))\big)\nonumber\\
&\hspace{2cm}-\sup_{x\in\mathbb{X}}\big(\log \partial_s \varphi^{(m)}(T^nx,q(T^{n+m}x))+\log \partial_s \varphi^{(n)}(x,q(T^{n}x))\big)\Big|\nonumber\\
&\leq \limsup_{n\to +\infty}\frac{1}{n+m}\Big(\sup_{x\in\mathbb{X}}\big|\log \partial_s \varphi^{(n)}(x,\varphi^{(m)}(T^nx,a))-\log \partial_s \varphi^{(n)}(x,q(T^{n}x))\big|\nonumber\\
&\hspace{2cm}+\sup_{x\in\mathbb{X}}\big|\log \partial_s \varphi^{(m)}(T^nx,a)\big|+\sup_{x\in\mathbb{X}}\big|\log \partial_s \varphi^{(m)}(T^nx,q(T^{n+m}x))\big|\Big)\nonumber\\
&\leq \limsup_{n\to +\infty}\frac{1}{n+m}(n\varepsilon+2C)=\varepsilon\label{e8}
\end{align}
Likewise,
\begin{align}
    \Big|\liminf_{n\to +\infty}&\frac{1}{n}\sup_{x\in\mathbb{X}} \log \partial_s \varphi^{(n)}(x,a)-\lambda_F\Big|\leq \varepsilon\label{e9}
\end{align}
The conclusion of the proposition follows from Inequality~\ref{e8} and Inequality~\ref{e9}.
\end{proof}

Lemma~\ref{lem11} allows us to express $\lambda_F$ without using limits.

\begin{lemma}\label{lem11}
    Assume (H\ref{hyp2}) and that $q(x)>0$ for all $x\in\mathbb{X}$. Then \begin{align*}
        \lambda_F= \sup_{\nu\in\mathcal{P}_T(\mathbb{X})}\int_{\mathbb{X}} F(x) \,\mathrm{d}\nu(x).
    \end{align*}
\end{lemma}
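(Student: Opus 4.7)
The plan is to reduce the statement to the semi-uniform ergodic theorem \cite[Theorem 1.9]{MR1734626}, which has already been invoked earlier in the paper (notably in Lemma~\ref{lem8}). By Lemma~\ref{lem7}, one has $\lambda_F = \lim_{n\to\infty} \frac{1}{n} \sup_{x\in\mathbb{X}} S_n F(x)$ where $S_n F(x) \defeq \sum_{k=0}^{n-1} F(T^k x)$, and by Lemma~\ref{lem9} the function $F$ is continuous on the compact space $\mathbb{X}$. So the claim reduces to the general identity
\begin{align*}
\lim_{n\to\infty} \frac{1}{n} \sup_{x \in \mathbb{X}} \sum_{k=0}^{n-1} f(T^k x) = \sup_{\nu \in \mathcal{P}_T(\mathbb{X})} \int_\mathbb{X} f \, \mathrm{d}\nu
\end{align*}
valid for any continuous $f: \mathbb{X} \to \mathbb{R}$, applied to $f = F$.

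The easy inequality $\sup_\nu \int F \, \mathrm{d}\nu \leq \lambda_F$ is the first thing I would write out: for any $\nu \in \mathcal{P}_T(\mathbb{X})$, integrating the Birkhoff sum yields $\int F \, \mathrm{d}\nu = \frac{1}{n} \int S_n F \, \mathrm{d}\nu \leq \frac{1}{n} \sup_x S_n F(x)$, and passing to the limit in $n$ and then supremum in $\nu$ concludes.

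For the reverse inequality $\lambda_F \leq \sup_\nu \int F \, \mathrm{d}\nu$, I would invoke the semi-uniform ergodic theorem in the contrapositive form: if $c > \sup_{\nu \in \mathcal{P}_T(\mathbb{X})} \int F \, \mathrm{d}\nu$, then $\int (F-c) \, \mathrm{d}\nu < 0$ for every $T$-invariant probability, hence by \cite[Theorem 1.9]{MR1734626} there exist $\varepsilon > 0$ and $N \in \mathbb{N}$ such that $\frac{1}{n} S_n(F-c)(x) < -\varepsilon$ for all $n \geq N$ and all $x \in \mathbb{X}$. This gives $\frac{1}{n} \sup_x S_n F(x) \leq c - \varepsilon$ for $n \geq N$, so $\lambda_F \leq c$; letting $c$ decrease to the supremum yields the bound. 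As a self-contained alternative (worth mentioning for the reader), one can pick near-maximizers $x_n$, form the empirical measures $\nu_n = \frac{1}{n}\sum_{k=0}^{n-1} \delta_{T^k x_n}$, extract a weak-$*$ limit $\nu$ which is automatically $T$-invariant, and use continuity of $F$ to obtain $\frac{1}{n} S_n F(x_n) \to \int F \, \mathrm{d}\nu \leq \sup_\nu \int F \, \mathrm{d}\nu$.

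There is no real obstacle here beyond correctly citing the semi-uniform ergodic theorem; both ingredients (continuity of $F$ from Lemma~\ref{lem9} and the Birkhoff expression from Lemma~\ref{lem7}) are already in place, and the rest is a standard consequence.
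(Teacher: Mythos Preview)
Your proposal is correct and follows essentially the paper's approach: both invoke the semi-uniform ergodic theorem \cite[Theorem 1.9]{MR1734626} (together with Lemma~\ref{lem7} and the continuity of $F$ from Lemma~\ref{lem9}) to obtain $\lambda_F \leq \sup_{\nu\in\mathcal{P}_T(\mathbb{X})} \int F\,\mathrm{d}\nu$. For the reverse inequality your direct argument ($\int F\,\mathrm{d}\nu = \tfrac{1}{n}\int S_nF\,\mathrm{d}\nu \leq \tfrac{1}{n}\sup_x S_nF(x)$) is a slight simplification of the paper's, which instead uses compactness of $\mathcal{P}_T(\mathbb{X})$ to find an ergodic maximizer $\nu_0$ and then applies Birkhoff's theorem to produce a point realizing the supremum.
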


\begin{proof}
    The function $F$ is continuous (by Lemma~\ref{lem9}). By semi-uniform ergodic theorem \cite[Theorem 1.9]{MR1734626} and by Equation (\ref{eq4}), we have:
\begin{align}\label{eq1}
\lambda_F\leq\sup_{\nu\in\mathcal{P}_T(\mathbb{X})}\int_{\mathbb{X}} F(x) \,\mathrm{d}\nu(x).
\end{align}
Since $\mathcal{P}_T(\mathbb{X})$ is compact, the supremum in (\ref{eq1}) is reached by an ergodic measure that we denote $\nu_0\in\mathcal{E}_T(\mathbb{X})$. By Birkhoff's ergodic theorem applied to the continuous function $F$ and by Equation (\ref{eq4}), for $\nu_0$-almost every $x\in\mathbb{X}$,
\begin{align*}
\lim_{n\to\infty}\frac{1}{n}\log \partial_s \varphi^{(n)}(x,q(T^nx))=\int_{\mathbb{X}} F(x) \,\mathrm{d}\nu_0(x).
\end{align*}

Thus, for $\nu_0$-almost every $x\in\mathbb{X}$, 
\begin{align*}
    &\lambda_F\geq \lim_{n\to\infty}\frac{1}{n}\log \partial_s \varphi^{(n)}(x,q(T^nx))=\sup_{\nu\in\mathcal{P}_T(\mathbb{X})}\int_{\mathbb{X}} F(x) \,\mathrm{d}\nu(x).\qedhere
\end{align*}
\end{proof}

Proposition~\ref{prop6} shows that $\lambda_F$ controls the exponential convergence to the invariant graph.

\begin{prop}\label{prop6}
   Assume (H\ref{hyp2}) and that $q(x)>0$ for all $x\in\mathbb{X}$. Then for all $a\in[0,1[$,
   \begin{align*}
       \limsup_{n\to\infty}\frac{\log\lVert \varphi^{(n)}(.,a)-q \rVert_{\infty}}{n}\leq \lambda_F
   \end{align*}
\end{prop}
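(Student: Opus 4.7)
The strategy is to combine the invariant graph relation $q(x)=\varphi^{(n)}(x,q(T^nx))$ from Lemma~\ref{lem2} with a mean value argument that reduces the difference $q-\varphi^{(n)}(\cdot,a)$ to a derivative of $\varphi^{(n)}$, and then to invoke Proposition~\ref{prop4}, which already identifies the exponential growth rate of the supremum of such derivatives as $\lambda_F$.

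More precisely, fix $a\in[0,1)$ and set $M\defeq\max(K,a)\in[0,1)$, where $K=\sup_{x\in\mathbb{X}}q(x)<1$ (the strict inequality comes from Lemma~\ref{lem8}, so that $M<1$). For every $x\in\mathbb{X}$ and every $n\in\mathbb{N}$, Lemma~\ref{lem2} gives
\begin{align*}
    q(x)-\varphi^{(n)}(x,a)=\varphi^{(n)}(x,q(T^nx))-\varphi^{(n)}(x,a).
\end{align*}
Applying the mean value theorem to $s\mapsto\varphi^{(n)}(x,s)$ on the interval with endpoints $a$ and $q(T^nx)$, there exists $\xi_{n,x}$ in that interval such that
\begin{align*}
    \bigl|q(x)-\varphi^{(n)}(x,a)\bigr|=\bigl|q(T^nx)-a\bigr|\cdot\partial_s\varphi^{(n)}(x,\xi_{n,x}).
\end{align*}
Since $\xi_{n,x}\leq\max(a,q(T^nx))\leq M$ and $s\mapsto\partial_s\varphi^{(n)}(x,s)$ is nondecreasing (by convexity of $\varphi^{(n)}(x,\cdot)$), we obtain
\begin{align*}
    \bigl|q(x)-\varphi^{(n)}(x,a)\bigr|\leq\partial_s\varphi^{(n)}(x,M),
\end{align*}
using also the trivial bound $|q(T^nx)-a|\leq 1$.

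Taking the supremum over $x\in\mathbb{X}$ and then the $\limsup$ in $n$, this yields
\begin{align*}
    \limsup_{n\to\infty}\frac{\log\lVert\varphi^{(n)}(.,a)-q\rVert_{\infty}}{n}\leq\limsup_{n\to\infty}\frac{1}{n}\sup_{x\in\mathbb{X}}\log\partial_s\varphi^{(n)}(x,M).
\end{align*}
If $M>0$, Proposition~\ref{prop4} applied with the parameter $M$ identifies the right-hand side with $\lambda_F$ and the proof is complete. The only subtlety is the case $a=M=0$, which is ruled out since the hypothesis $a\in[0,1)$ of the proposition together with $q>0$ forces $K>0$, so $M=\max(K,a)>0$ holds unconditionally.

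The main (and only) conceptual point is the reduction step: recognizing that the error $q-\varphi^{(n)}(\cdot,a)$ can be controlled, via the invariant graph equation and convexity, by a derivative evaluated at a fixed point $M$ bounded away from $1$. Once this is in place, Proposition~\ref{prop4} does all the work, and no new estimate on $\partial_s\varphi^{(n)}$ is required.
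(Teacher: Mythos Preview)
Your proof is correct and follows essentially the same approach as the paper: set $M=\max(K,a)$, bound $|q(x)-\varphi^{(n)}(x,a)|$ by $\partial_s\varphi^{(n)}(x,M)$ using the invariant graph relation and convexity of $\varphi^{(n)}(x,\cdot)$, then invoke Proposition~\ref{prop4}. The paper phrases the middle step as a bracketing argument ($|\varphi^{(n)}(x,a)-q(x)|\leq \varphi^{(n)}(x,M)-\varphi^{(n)}(x,0)\leq M\,\partial_s\varphi^{(n)}(x,M)$) rather than an explicit mean-value theorem, but this is the same idea.
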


\begin{proof}
    Let $a\in [0,1)$, $\Tilde{K}\defeq\max(a,K)\in(0,1)$, $x\in\mathbb{X}$, and $n\in\mathbb{N}$. Using that $t\in[0,1]\mapsto\partial_s\varphi^{(n)}(x,t)$ and $t\in[0,1]\mapsto\varphi^{(n)}(x,t)$ are non-decreasing,
    \begin{align*}
        |\varphi^{(n)}(x,a)-q(x)|\leq \varphi^{(n)}(x,\Tilde{K})-\varphi^{(n)}(x,0)\leq \partial_s\varphi^{(n)}(x,\Tilde{K})\Tilde{K}
    \end{align*}
    Therefore, by Proposition~\ref{prop4}, 
    \begin{align*}
       &\limsup_{n\to\infty}\frac{\log\lVert \varphi^{(n)}(.,a)-q \rVert_{\infty}}{n}\leq \limsup_{n\to\infty}\frac{1}{n}\underset{x\in\mathbb{X}}{\sup}\log \partial_s\varphi^{(n)}(x,\Tilde{K})=\lambda_F\qedhere
   \end{align*}
\end{proof}

Lemma~\ref{lem12} shows that convergence towards the invariant graph is exponential.

\begin{lemma}\label{lem12}
Assume (H\ref{hyp2}) and that $q(x)>0$ for all $x\in\mathbb{X}$. Then $\lambda_F< 0$.
\end{lemma}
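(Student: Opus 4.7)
The plan is to combine the variational formula $\lambda_F=\sup_{\nu\in\mathcal{P}_T(\mathbb{X})}\int F\,\mathrm{d}\nu$ from Lemma~\ref{lem11} with a pointwise comparison between $F(x)=\log\partial_s\varphi(x,q(Tx))$ and the telescoping quantity $\log\tfrac{1-q(x)}{1-q(Tx)}$. Since $F$ is continuous (Lemma~\ref{lem9}) and $\mathcal{P}_T(\mathbb{X})$ is weak-$*$ compact, the supremum is attained at some $\nu_0\in\mathcal{P}_T(\mathbb{X})$, and the goal reduces to showing $\int F\,\mathrm{d}\nu_0<0$.

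The pointwise ingredient comes from the invariant-graph equation itself: using $q(x)=\varphi(x,q(Tx))$ and $\varphi(x,1)=1$ we get $1-q(x)=\int_{q(Tx)}^{1}\partial_s\varphi(x,t)\,\mathrm{d}t$, and the monotonicity of $\partial_s\varphi(x,\cdot)$ yields, for every $x\in\mathbb{X}$,
$$
\partial_s\varphi(x,q(Tx))\ \le\ \frac{1-q(x)}{1-q(Tx)}\ \le\ m(x).
$$
Lemma~\ref{lem8} supplies $q\le K<1$, so the denominators are positive and $\log(1-q)$ is bounded on $\mathbb{X}$. Taking logarithms, integrating against $\nu_0$, and using $T$-invariance of $\nu_0$ to kill the telescoping piece $\int[\log(1-q(x))-\log(1-q(Tx))]\,\mathrm{d}\nu_0=0$, we obtain $\lambda_F=\int F\,\mathrm{d}\nu_0\le 0$.

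The main obstacle is to upgrade this to a strict inequality, and this is exactly where the non-degeneracy clause $\mu_x\notin\{\delta_0,\delta_1\}$ of (H\ref{hyp2}) comes in. Equality $\partial_s\varphi(x,q(Tx))=(1-q(x))/(1-q(Tx))$ forces $\partial_s\varphi(x,\cdot)$ to be constant on the non-degenerate interval $[q(Tx),1]\subset[\kappa,1]$, which in turn forces $\mu_x$ to be supported on $\{0,1\}$. So if $\lambda_F=0$, then $\mu_x$ is supported on $\{0,1\}$ for $\nu_0$-almost every $x$; combined with $\mu_x\neq\delta_1$ this gives $\mu_x(0)>0$ and hence $m(x)=\mu_x(1)<1$ $\nu_0$-a.e., so $\int\log m\,\mathrm{d}\nu_0<0$, contradicting uniform supercriticality ($\lambda_{\min}>0$). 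Therefore $\lambda_F<0$.
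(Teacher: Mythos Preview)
Your proof is correct and follows essentially the same route as the paper's. Both derive the pointwise bound $F(x)\le\log\tfrac{1-q(x)}{1-q(Tx)}$ from the invariant-graph equation and monotonicity of $\partial_s\varphi(x,\cdot)$ (the paper routes this through Jensen's inequality, which you bypass), integrate against an invariant measure so that the right-hand side telescopes to $0$, and then use uniform supercriticality to exclude equality by observing that equality forces $\mu_x(\{0,1\})=1$ and hence $m(x)<1$.
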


\begin{proof}
    Let $x\in\mathbb{X}$. Using Lemma~\ref{lem2}, Jensen's inequality, and that for all $x\in \mathbb{X}$, $t\in[0,1]\mapsto\partial_s\varphi(x,t)$ is non-decreasing:
    \begin{align}
        \log (1-q(x))&=\log(1-\varphi(x,q(Tx))\nonumber\\
        &=\log\left(\int_{q(Tx)}^1\partial_s\varphi(x,t) \,\mathrm{d}t\right)\nonumber\\
        &=\log(1-q(Tx))+\log\left(\int_{q(Tx)}^1\partial_s\varphi(x,t) \,\frac{\mathrm{d}t}{1-q(Tx)}\right)\nonumber\\
        &\geq\log(1-q(Tx))+\int_{q(Tx)}^1\log(\partial_s\varphi(x,t)) \,\frac{\mathrm{d}t}{1-q(Tx)}\nonumber\\
        &\geq\log(1-q(Tx))+\log(\partial_s\varphi(x,q(Tx)))\label{eq10}.
    \end{align}
    Let $\nu\in\mathcal{P}_T(\mathbb{X})$. As $\mathbb{E}_\nu[\log m(x)]>0$, there exists $A\in\mathcal{B}(\mathbb{X})$ such that for all $x\in A$, $\mu_x(\{0,1\})<1$ and $\nu(A)>0$. Thus, for all $x\in A$, $t\mapsto\partial_s\varphi(x,t)$ is increasing and Inequation (\ref{eq10}) is strict. We integrate Inequation (\ref{eq10}) according to $\nu$. Using that $\nu$ is $T$-invariant,
    \begin{align*}
\int_{\mathbb{X}}\partial_s\varphi(x,q(Tx))\,\mathrm{d}\nu(x)<0.
    \end{align*}
Thus, by Lemma~\ref{lem11}, $\lambda_F<0$ because $\mathcal{P}_T(\mathbb{X})$ is compact.
\end{proof}

\subsubsection{Proof of the Hölder regularity of the invariant graph}\label{section4.2.2}

We can now proceed to the proof of Theorem~\ref{thm2.6}. Lemma~\ref{lem13} shows that $\varphi$ inherits the Hölder regularity from $\mu$.

\begin{lemma}\label{lem13}
Let $\alpha\in(0,1]$, assume (H\ref{hyp3}($\alpha$)). Then
$\varphi$ is $\alpha$-Hölder continuous of seminorm smaller than $|\mu|_\alpha$ in the first variable uniformly in the second variable. In other words, for all $s\in[0,1]$, $|\varphi(.,s)|_\alpha\leq |\mu|_\alpha $.
\end{lemma}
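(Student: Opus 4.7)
The plan is elementary: expand the difference $\varphi(x,s)-\varphi(y,s)$ using the series definition and bound it termwise by $|\mu_x(k)-\mu_y(k)|$, exploiting $s^k\le 1$ for $s\in[0,1]$.

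More precisely, I would fix $s\in[0,1]$ and $x,y\in\mathbb{X}$, and write
\begin{align*}
|\varphi(x,s)-\varphi(y,s)|
&=\left|\sum_{k=0}^{+\infty}(\mu_x(k)-\mu_y(k))s^k\right|\\
&\leq \sum_{k=0}^{+\infty}|\mu_x(k)-\mu_y(k)|\,s^k\\
&\leq \sum_{k=0}^{+\infty}|\mu_x(k)-\mu_y(k)|\text{ because }s\in[0,1]\\
&=\lVert\mu_x-\mu_y\rVert_1.
\end{align*}
Then the $\alpha$-Hölder hypothesis on $x\in\mathbb{X}\mapsto\mu_x$ (part of (H\ref{hyp3}($\alpha$))) yields $\lVert\mu_x-\mu_y\rVert_1\leq|\mu|_\alpha\,d(x,y)^\alpha$, and dividing by $d(x,y)^\alpha$ and taking the supremum over $x\ne y$ gives $|\varphi(\cdot,s)|_\alpha\leq|\mu|_\alpha$, uniformly in $s\in[0,1]$.

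There is no serious obstacle: the key point is simply that the Hölder seminorm is measured in $\ell^1$, which is exactly the norm that dominates the generating function difference on $[0,1]$. The role of the hypothesis $s\in[0,1]$ is crucial since it is what allows the pointwise bound $s^k\leq 1$ that collapses the series into $\lVert\mu_x-\mu_y\rVert_1$; for $s>1$ this would fail. No integrability hypothesis on $\tilde\mu$ is needed here — that will only be used later when differentiating in $s$.
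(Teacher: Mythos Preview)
Your proof is correct and matches the paper's own argument essentially line for line: expand the series, bound $s^k\le 1$, collapse to $\lVert\mu_x-\mu_y\rVert_1$, and apply the $\alpha$-H\"older assumption on $\mu$. Your additional remarks on which hypotheses are (and are not) used here are accurate.
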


\begin{proof}
    For all $s\in[0,1]$ and $x,y\in\mathbb{X}$,
    \begin{align*}
        |\varphi(x,s)-\varphi(y,s)|&\leq \sum_{k=0}^{+\infty} s^k |\mu_x(k)-\mu_y(k)|\\
        &\leq \lVert \mu_x -\mu_y \rVert_1 \text{ because } s\in[0,1],\\
        &\leq |\mu|_\alpha d(x,y)^\alpha \text{ because $\mu$ is $\alpha$-Hölder continuous.}\qedhere
    \end{align*}
\end{proof}

Then Lemma~\ref{lem2.4} shows that for all $n\in\mathbb{N}$, $\varphi^{(n)}$ inherits the Hölder regularity from $\varphi$.

\begin{lemma}\label{lem2.4}
     Let $\alpha\in(0,1]$, assume (H\ref{hyp3}($\alpha$)). For all $n\in\mathbb{N}$, the function $\varphi^{(n)}$ is $\alpha$-Hölder continuous in the first variable uniformly in the second variable. In other words, there exists $C_n>0$ such that for all $s\in[0,1]$, $|\varphi^{(n)}(.,s)|_\alpha\leq C_n$.
\end{lemma}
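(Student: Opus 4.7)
The plan is to argue by induction on $n \in \mathbb{N}$. The base case $n = 0$ is trivial since $\varphi^{(0)}(x,s) = s$ does not depend on $x$, and the case $n = 1$ is precisely Lemma~\ref{lem13} with $C_1 = |\mu|_\alpha$. For the inductive step, the key tool is Proposition~\ref{prop5} applied with $k = 1$, which gives the skew-product style recursion
\begin{align*}
\varphi^{(n+1)}(x,s) = \varphi(x, \varphi^{(n)}(Tx, s)).
\end{align*}

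Assuming $|\varphi^{(n)}(\cdot, s)|_\alpha \leq C_n$ for all $s \in [0,1]$, for $x, y \in \mathbb{X}$ and $s \in [0,1]$ the triangle inequality yields
\begin{align*}
|\varphi^{(n+1)}(x,s) - \varphi^{(n+1)}(y,s)|
&\leq |\varphi(x, \varphi^{(n)}(Tx, s)) - \varphi(y, \varphi^{(n)}(Tx, s))|\\
&\quad + |\varphi(y, \varphi^{(n)}(Tx, s)) - \varphi(y, \varphi^{(n)}(Ty, s))|.
\end{align*}
The first summand is controlled directly by Lemma~\ref{lem13}: it is at most $|\mu|_\alpha\, d(x,y)^\alpha$. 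For the second, I would use that $s \mapsto \varphi(y,s)$ is Lipschitz on $[0,1]$ with constant $\partial_s\varphi(y,1) = m(y)$, and that $m$ is uniformly bounded on $\mathbb{X}$: because $\tilde{\mu}$ stochastically dominates all $\mu_x$ and has a first moment, one obtains $M \defeq \sup_{x \in \mathbb{X}} m(x) \leq \sum_{k \geq 1} \tilde{\mu}([k,+\infty)) < +\infty$. Combining the mean-value bound $|\varphi(y,a) - \varphi(y,b)| \leq M|a-b|$ with the induction hypothesis and the Lipschitz property of $T$ from (H\ref{hyp3}($\alpha$))\ref{h2} gives
\begin{align*}
|\varphi(y, \varphi^{(n)}(Tx,s)) - \varphi(y, \varphi^{(n)}(Ty,s))| \leq M\, C_n\, d(Tx,Ty)^\alpha \leq M\, C_n\, \lVert T\rVert_{Lip}^\alpha\, d(x,y)^\alpha.
\end{align*}
Thus $C_{n+1} \defeq |\mu|_\alpha + M\, \lVert T\rVert_{Lip}^\alpha\, C_n$ works, closing the induction.

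No step here is particularly delicate, because the lemma only claims finiteness of $C_n$ at each fixed $n$. The above recursion produces a constant $C_n$ growing essentially geometrically with ratio $M\lVert T\rVert_{Lip}^\alpha$, which in general exceeds one and precludes a uniform bound in $n$. This is precisely the reason assumption (H\ref{hyp3}($\alpha$))\ref{h3} (relating $\lambda_u$ and $\lambda_F$) will be required for the proof of Theorem~\ref{thm2.6}: the crude multiplicative factor $M$ must there be sharpened into a growth rate controlled by $\lambda_F$, so that composing with the dilation rate $\lambda_u$ of $T$ keeps the Hölder seminorms of $\varphi^{(n)}(\cdot,q(T^n\cdot))$ bounded as $n \to \infty$.
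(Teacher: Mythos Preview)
Your proof is correct and follows essentially the same route as the paper: induction on $n$ via the recursion $\varphi^{(n+1)}(x,s)=\varphi(x,\varphi^{(n)}(Tx,s))$, the same triangle-inequality split, Lemma~\ref{lem13} for the first term, and the Lipschitz bound $|\varphi(y,a)-\varphi(y,b)|\leq \big(\sup_{x}m(x)\big)|a-b|$ together with $T$ Lipschitz for the second, yielding $C_{n+1}=|\mu|_\alpha+\big(\sup_x m(x)\big)\lVert T\rVert_{Lip}^\alpha C_n$. The paper writes $\beta$ for your $M=\sup_x m(x)$ (finite by continuity of $m$ on the compact $\mathbb{X}$, equivalently by your stochastic-domination argument); otherwise the arguments coincide.
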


\begin{proof}
    We prove this result by induction on $n\in\mathbb{N}$. For $n=0$, the result is true.
    
   Assuming the result is true for $n\in\mathbb{N}$. Let $s\in[0,1]$ and $x,y\in\mathbb{X}$. Using Lemma~\ref{lem13} and the induction hypothesis,
    \begin{align*}
        |\varphi^{(n+1)}(x,s)-\varphi^{(n+1)}(y,s)|
       &= |\varphi(x,\varphi^{(n)}(Tx,s))-\varphi(y,\varphi^{(n)}(Ty,s))|\\
        &\leq  |\varphi(x,\varphi^{(n)}(Tx,s))-\varphi(y,\varphi^{(n)}(Tx,s))|\\
        & \hspace{4em}+ |\varphi(y,\varphi^{(n)}(Tx,s))-\varphi(y,\varphi^{(n)}(Ty,s))|\\
        &\leq  |\mu|_\alpha d(x,y)^\alpha + \beta |\varphi^{(n)}(Tx,s)-\varphi^{(n)}(Ty,s)|\\
        &\leq  |\mu|_\alpha d(x,y)^\alpha + \beta C_n d(Tx,Ty)^\alpha\\
        &\leq  |\mu|_\alpha d(x,y)^\alpha + \beta C_n \lVert T\rVert_{lip}^\alpha d(x,y)^\alpha\\
        &\leq  C_{n+1} d(x,y)^\alpha \text{ where } C_{n+1}=|\mu|_\alpha+\beta C_n \lVert T\rVert_{lip}^\alpha.
    \end{align*}
    Thus, $\varphi^{(n+1)}$ is $\alpha$-Hölder continuous (with seminorm $C_{n+1}$) in the first variable uniformly in the second.
\end{proof}

Controlling the seminorm of $\alpha$-Hölder continuity of $\varphi^{(n)}(.,s)$ allows us to obtain, by taking the limit, that the function $q$ is also $\alpha$-Hölder continuous.

\begin{proof}[Proof of Theorem~\ref{thm2.6}]
    Let $\varepsilon>0$ such that $\lambda_F+\varepsilon+\alpha(\lambda_u+\varepsilon)<0$. Such an $\varepsilon$ exists since $\lambda_u\leq 0$ or $ \left(\lambda_u>0 \text{ and }\alpha<-\frac{\lambda_F}{\lambda_u}\right)$.
    There exists $N_0\in\mathbb{N}$ such that for all $n\geq N_0$, $\frac{1}{n}\log(\lVert T^n\rVert_{Lip})<\lambda_u+\varepsilon$. There exists $N_1\in\mathbb{N}$ such that for all $n\geq N_1$ and $s\in[0,K]$, $\underset{x\in\mathbb{X}}{\sup}\frac{1}{n}\log(\partial_s\varphi^{(n)}(x,s))<\lambda_F+\varepsilon$ by Proposition~\ref{prop4}. Let $N=\max(N_0,N_1)$, $A_N=e^{N(\lambda_F+\varepsilon+\alpha(\lambda_u+\varepsilon))}$, and $C_N$ defined as in Lemma~\ref{lem2.4}. We have $A_N<1$. Start by proving Lemma~\ref{lem2.3}, which controls the composition of continuous Hölder functions:
    
    \begin{lemma}\label{lem2.3}
        Let $f\in\mathcal{C}(\mathbb{X},[0,K])$. Suppose that $f$ is $\alpha$-Hölder continuous. Then $x\in\mathbb{X}\mapsto\varphi^{(N)}(x,f(T^Nx))$ is $\alpha$-Hölder continuous, and $|\varphi^{(N)}(.,f(T^N.))|_\alpha\leq |f|_\alpha A_N+C_N$.
    \end{lemma}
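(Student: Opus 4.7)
The plan is to bound the $\alpha$-Hölder seminorm of $x \mapsto \varphi^{(N)}(x, f(T^N x))$ by the classical two-step triangle inequality that separates variation in the base point from variation in the fibre argument. For $x, y \in \mathbb{X}$, I would write
\begin{align*}
|\varphi^{(N)}(x, f(T^N x)) - \varphi^{(N)}(y, f(T^N y))| &\leq |\varphi^{(N)}(x, f(T^N x)) - \varphi^{(N)}(y, f(T^N x))| \\
&\quad + |\varphi^{(N)}(y, f(T^N x)) - \varphi^{(N)}(y, f(T^N y))|.
\end{align*}
The first term freezes the fibre argument at $s = f(T^N x) \in [0,1]$ and measures only the base-point variation; Lemma~\ref{lem2.4} gives directly the bound $C_N\, d(x,y)^\alpha$.

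For the second term, I would apply the mean value inequality to the smooth, convex function $t \mapsto \varphi^{(N)}(y, t)$ between $f(T^N x)$ and $f(T^N y)$. Since the derivative $t \mapsto \partial_s\varphi^{(N)}(y,t)$ is non-decreasing, and since $f$ takes values in $[0,K]$, the mean value can be controlled by $\sup_{s\in[0,K]}\partial_s\varphi^{(N)}(y,s)$. The choice of $N \geq N_1$ (from Proposition~\ref{prop4}) bounds this supremum by $e^{N(\lambda_F + \varepsilon)}$ uniformly in $y$. The Hölder regularity of $f$ combined with $\|T^N\|_{Lip} \leq e^{N(\lambda_u + \varepsilon)}$ (from $N \geq N_0$) gives
\begin{align*}
|f(T^N x) - f(T^N y)| \leq |f|_\alpha\, \|T^N\|_{Lip}^\alpha\, d(x,y)^\alpha \leq |f|_\alpha\, e^{N\alpha(\lambda_u + \varepsilon)}\, d(x,y)^\alpha.
\end{align*}
Multiplying these two estimates produces $|f|_\alpha\, A_N\, d(x,y)^\alpha$ for the second term.

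Summing the two contributions yields $|\varphi^{(N)}(\cdot, f(T^N \cdot))|_\alpha \leq |f|_\alpha A_N + C_N$, which is precisely the claim. There is no real obstacle here: the argument is a routine splitting, and the only mild subtlety is to make sure the derivative bound is used on the correct interval $[0,K]$, which is exactly why $f$ was assumed to take values in $[0,K]$ and why $N_1$ was chosen with respect to this interval in the setup preceding the lemma.
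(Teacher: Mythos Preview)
Your proof is correct and follows essentially the same approach as the paper: the identical triangle-inequality splitting into base-point and fibre variation, with the first term bounded by $C_N d(x,y)^\alpha$ via Lemma~\ref{lem2.4} and the second by $e^{N(\lambda_F+\varepsilon)}|f|_\alpha e^{N\alpha(\lambda_u+\varepsilon)}d(x,y)^\alpha$. Your additional remarks on the mean value inequality and the role of the interval $[0,K]$ simply make explicit what the paper leaves implicit.
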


    \begin{proof}
        Let $x,y\in\mathbb{X}$.
        \begin{align*}
            |\varphi^{(N)}(x,f(T^Nx))-\varphi^{(N)}(y,f(T^Ny))|&\leq|\varphi^{(N)}(x,f(T^Nx))-\varphi^{(N)}(y,f(T^Nx))|\\
            & \hspace{4em}+|\varphi^{(N)}(y,f(T^Nx))-\varphi^{(N)}(y,f(T^Ny))|\\
            &\leq C_Nd(x,y)^\alpha + e^{N(\lambda_F+\varepsilon)}|f(T^Nx)-f(T^Ny))|\\
            &\leq C_Nd(x,y)^\alpha+e^{N(\lambda_F+\varepsilon)}|f|_\alpha d(T^Nx,T^Ny)^\alpha\\
            &\leq C_Nd(x,y)^\alpha+e^{N(\lambda_F+\varepsilon)}|f|_\alpha e^{N\alpha(\lambda_u+\varepsilon)}d(x,y)^\alpha.
        \end{align*}
        Thus, $x\in\mathbb{X}\mapsto\varphi^{(N)}(x,f(T^Nx))$ is $\alpha$-Hölder continuous, and
        \begin{align*}
            \varphi^{(N)}(.,f(T^N.))|_\alpha &\leq |f|_\alpha A_N+C_N.\qedhere
        \end{align*}
    \end{proof}

    Lemma~\ref{lem2.3} implies Corollary~\ref{cor2.2}, allowing us to control the seminorm $\alpha$-Hölder of $\varphi^{(nN)}$. Corollary~\ref{cor2.2} and Lemma~\ref{lem2.2} allow us to check the Hölder regularity of q.

    \begin{corollary}\label{cor2.2}
        For all $n\in\mathbb{N}^*$, $x\in\mathbb{X}\mapsto \varphi^{(nN)}(x,0)$ is $\alpha$-Hölder continuous with a seminorm less than $\frac{C_N}{1-A_N}$.
    \end{corollary}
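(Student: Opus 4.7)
The plan is to prove Corollary~\ref{cor2.2} by induction on $n$, exploiting the cocycle identity from Proposition~\ref{prop5} to recast the iteration of the $N$-step generating function as a single application of the operator studied in Lemma~\ref{lem2.3}, and then summing a geometric series.

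First I would introduce the shorthand $f_n(x)\defeq\varphi^{(nN)}(x,0)$ for $n\in\mathbb{N}$. By Proposition~\ref{prop5} applied with exponents $nN$ and $N$,
\begin{align*}
    f_{n+1}(x)=\varphi^{((n+1)N)}(x,0)=\varphi^{(N)}\bigl(x,\varphi^{(nN)}(T^Nx,0)\bigr)=\varphi^{(N)}(x,f_n(T^Nx)).
\end{align*}
Before iterating, I need to check that $f_n$ takes values in $[0,K]$ so that Lemma~\ref{lem2.3} applies: by monotone convergence $\varphi^{(nN)}(x,0)\nearrow q(x)$, and Lemma~\ref{lem8} (together with the definition $K=\sup_{x\in\mathbb{X}}q(x)<1$ used in Subsection~\ref{section4.2}) gives $q(x)\leq K$, hence $f_n(\mathbb{X})\subset [0,K]$. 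Continuity of $f_n$ follows from Corollary~\ref{cor5}.

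Next I would run the induction. For $n=0$, $f_0\equiv 0$ is trivially $\alpha$-Hölder with seminorm $0$. Assuming $f_n$ is $\alpha$-Hölder continuous with $|f_n|_\alpha\leq \frac{C_N(1-A_N^n)}{1-A_N}$, Lemma~\ref{lem2.3} applied to $f_n$ yields that $f_{n+1}=\varphi^{(N)}(\cdot,f_n(T^N\cdot))$ is $\alpha$-Hölder continuous with
\begin{align*}
    |f_{n+1}|_\alpha\leq A_N |f_n|_\alpha+C_N\leq A_N\cdot\frac{C_N(1-A_N^n)}{1-A_N}+C_N=\frac{C_N(1-A_N^{n+1})}{1-A_N}.
\end{align*}
Since $A_N<1$ (this was precisely the point of choosing $\varepsilon$ so that $\lambda_F+\varepsilon+\alpha(\lambda_u+\varepsilon)<0$), we conclude $|f_n|_\alpha<\frac{C_N}{1-A_N}$ for every $n\in\mathbb{N}^*$.

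There is no real obstacle here: Lemma~\ref{lem2.3} does all the heavy lifting, and the corollary is essentially the statement that the affine recursion $a_{n+1}\leq A_N a_n+C_N$ with contraction factor $A_N<1$ has a uniform bound by the geometric sum. The only thing to be careful about is ensuring the input of Lemma~\ref{lem2.3} remains in $[0,K]$ throughout the induction, which is automatic from the monotone convergence $\varphi^{(nN)}(x,0)\nearrow q(x)\leq K$.
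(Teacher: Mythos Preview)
Your proof is correct and follows essentially the same route as the paper: induction on $n$ via the identity $\varphi^{((n+1)N)}(x,0)=\varphi^{(N)}(x,\varphi^{(nN)}(T^Nx,0))$, then Lemma~\ref{lem2.3} to obtain the affine recursion $|f_{n+1}|_\alpha\leq A_N|f_n|_\alpha+C_N$ with $A_N<1$. The only differences are cosmetic: you start the induction at $n=0$, track the sharper bound $\frac{C_N(1-A_N^n)}{1-A_N}$, and explicitly verify that $f_n$ lands in $[0,K]$ so that Lemma~\ref{lem2.3} applies---a hypothesis the paper's proof uses silently.
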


    \begin{proof}
        Prove this result by induction on $n\in\mathbb{N}^*$. For $n=1$, the result is true by Lemma~\ref{lem2.4} and since $C_N\leq\frac{C_N}{1-A_N}$.
        
        Assuming the result is true at rank $n\in\mathbb{N}^*$. Then for all $x\in\mathbb{X}$, $\varphi^{(N(n+1))}(x,0)=\varphi^{(N)}(x,\varphi^{(nN)}(T^Nx,0))$. We can apply Lemma~\ref{lem2.3} with the function $x\mapsto\varphi^{(Nn)}(x,0)$, which is $\alpha$-Hölder continuous by the induction hypothesis. Thus, $x\mapsto\varphi^{(N(n+1))}(x,0)$ is $\alpha$-Hölder continuous. Moreover,
        \begin{align*}
            |\varphi^{(N(n+1))}(.,0)|_\alpha &\leq A_N  |\varphi^{(nN)}(.,0)|_\alpha +C_N \text{ by Lemma~\ref{lem2.3}}\\
            &\leq A_N \frac{C_N}{1-A_N}+C_N \text{ by induction hypothesis}.\\ 
            &= \frac{C_N}{1-A_N}.
        \end{align*}
        Therefore, $x\in\mathbb{X}\mapsto \varphi^{((n+1)N)}(x,0)$ is $\alpha$-Hölder continuous with a seminorm less than $\frac{C_N}{1-A_N}$.
    \end{proof}

    Now, we can return to the proof of Theorem~\ref{thm2.6}. By definition, for all $x\in\mathbb{X}$, $q(x)=\underset{n\to\infty}{\lim}\varphi^{(n)}(x,0)$. Moreover, by Corollary~\ref{cor2.2}, for all $n\in\mathbb{N}^*$, $x\mapsto\varphi^{(nN)}(x,0)$ is $\alpha$-Hölder continuous, and $|\varphi^{(nN)}(.,0)|_\alpha\leq \frac{C_N}{1-A_N}$. Thus, by Lemma~\ref{lem2.2}, $q$ is $\alpha$-Hölder continuous.\qedhere
\end{proof} 

Now that we have proven that the function $q$ is $\alpha$-Hölder continuous, we will show that the convergence of the sequence of functions $(x\mapsto\varphi(x,0))_{n\in\mathbb{N}}$ to $q$ is not only pointwise (as defined by default) but also a Hölder convergence.

\begin{proof}[Proof of Corollary~\ref{thmCN}]
    Let $0<\beta<\alpha$, $t=\frac{\beta}{\alpha}\in(0,1)$, and $N\in\mathbb{N}$, $A_N<1$ and $C_N\geq 0$ defined as in Theorem~\ref{thm2.6} and Lemma~\ref{lem2.4}.
    By Corollary~\ref{cor2.2}, for all $n\in\mathbb{N}$, $|\varphi^{(nN)}(.,0)|_\alpha\leq \frac{C_N}{1-A_N}$ and by the proof of Lemma~\ref{lem2.4}, $|\varphi^{(n+1)}(.,0)|_\alpha\leq |\mu|_\alpha+\beta|\varphi^{(n)}(.,0)|_\alpha \lVert T\rVert_{lip}^\alpha$. In particular, $(|\varphi^{(n)}(.,0)|_\alpha)_{n\in\mathbb{N}}$ is bounded by a constant denoted $C$.
    Moreover, by Proposition~\ref{prop2.3}, for all $n\in\mathbb{N}$:
    \begin{align*}
        |q-\varphi^{(n)}(.,0)|_\beta&\leq (2\lVert q-\varphi^{(n)}(.,0)\rVert_{\infty})^{1-t} |q-\varphi^{(n)}(.,0)|_\alpha^{t}\\
        &\leq (2\lVert q-\varphi^{(n)}(.,0)\rVert_{\infty})^{1-t} (|q|_\alpha+|\varphi^{(n)}(.,0)|_\alpha)^{t}\\
        &\leq (2\lVert q-\varphi^{(n)}(.,0)\rVert_{\infty})^{1-t} (|q|_\alpha+C)^{t}.
    \end{align*}
    Since $(\varphi^{(n)}(.,0))_{n\in\mathbb{N}}$ converges uniformly to $q$ by Corollary~\ref{cor6}, it follows that \newline $|q-\varphi^{(n)}(.,0)|_\beta$ converges to 0 as $n$ tends to $+\infty$. Thus, $(\varphi^{(n)}(.,0))_{n\in\mathbb{N}}$ converges in the $\beta$-Hölder norm to $q$.
\end{proof}

\subsubsection{Application to the example of the doubling map}

We examine the regularity of the invariant graph in the case of Example~\ref{ex1}. By Theorem~\ref{thm3}, for all $\lambda>1$, $q_\lambda$ is continuous. Moreover, by Theorem~\ref{thm2.6}, for all $\lambda>1$, $q_\lambda$ is $\alpha$-Hölder continuous for all $\alpha\in(0,1]$, such that $\alpha<-\frac{\lambda_F}{\log{2}}$ (because $\lambda_u=\log{2})$.

\begin{figure}[!ht]
    \center
\includegraphics[scale=0.46]{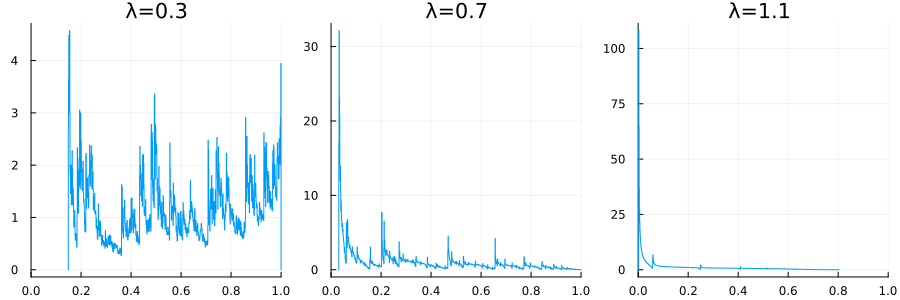}
\captionof{figure}{Histogram of the push-forward of the Lebesgue measure by $q_\lambda$ for some $\lambda\in\mathbb{R}$.}
\label{figure3}
\end{figure}

 The Lebesgue measure is ergodic for the transformation $T:x\mapsto 2x$, and according to \eqref{e17}, $Leb(N_\lambda)=0$ if and only if $\lambda>0$.

In Figure~\ref{figure3}, for $\lambda=0.3$ and $\lambda=0.7$ (critical case), the push-forward of the Lebesgue measure by $q_\lambda$ is lower bounded by a positive constant. Indeed, $\mu_{\lambda,x}(0)>0$ (the probability of having zero offspring according to the law $\mu_{\lambda,x}$ is positive) and continuous in $x\in\RZ$. 

 For $\lambda=1.1$ (uniformly supercritical case), $q_\lambda$ is upper bound by a constant strictly smaller than 1 and $\alpha$-Hölder continuous for some parameter $\alpha\in(0,1]$.
 
  We can ask if $q_\lambda$ is not even more regular, for example $\mathcal{C}^1$. In fact, if $q_\lambda$ is $\mathcal{C}^1$ and the measure of the points where the derivate vanishes is zero (which is generically true for a $\mathcal{C}^1$ function), the push-forward of the Lebesgue measure by $q_\lambda$ is absolutely continuous with respect to Lebesgue measure.  
 
If $q_\lambda$ is $\mathcal{C}^2$ with a finite number of points where the derivate vanishes (which is generically true for a $\mathcal{C}^2$ function), the number of poles of the push-forward of the Lebesgue measure by $q_\lambda$ is finite. The poles (which are singularities in $\frac{1}{\sqrt{x}}$) are located in the image of the points where the derivate vanishes by $q_\lambda$ (compare Figure~\ref{figure2}
and Figure~\ref{figure3}) . The hypothesis that the function is more than Hölder continuous but $\mathcal{C}^1$, and even perhaps $\mathcal{C}^k$ (for $k\geq 1$), seems possible because this is the case in other similar models, see for example \cite{MR1898800} in hyperbolic systems.
\newpage
\appendix
\section{Annex: Hölder spaces on a compact set}

In this section, let $(X, d)$ be a compact metric space and $(Y, \lVert \cdot \rVert)$ be a normed vector space.

\begin{definition}
    Let $f \in \mathcal{C}(X,Y)$ and $0 < \alpha \leq 1$. $f$ is said to be $\alpha$-Hölder if there exists $C \geq 0$ such that for all $x, y \in X$:
    \begin{align*}
        \lVert f(x) - f(y)\rVert \leq C d(x, y)^\alpha.
    \end{align*}
    We denote $\mathcal{C}^{0,\alpha}(X,Y)$ the space of $\alpha$-Hölder functions.
For $f\in\mathcal{C}^{0,\alpha}(X) $, let:
\begin{align*}
    |f|_{\alpha}\defeq \inf_{x,y\in X:x\neq y}\frac{\lVert f(x)-f(y)\rVert}{d(x,y)^\alpha},
\end{align*}
which is a seminorm on $\mathcal{C}^{0,\alpha}(X,Y)$.

For $f\in\mathcal{C}^{0,\alpha}(X) $, let
\begin{align*}
    \lVert f\rVert_{\alpha}\defeq \lVert f\rVert_{\infty}+|f|_{\alpha}
\end{align*}
which is a norm on $\mathcal{C}^{0,\alpha}(X,Y)$.
\end{definition}

Lemma~\ref{lem2.3} gives a condition that the limit inherits the Hölder continuity of a sequence of functions.

\begin{lemma}\label{lem2.2}
Let $\alpha\in (0,1]$ and $(f_n)_{n\in\mathbb{N}}\in\mathcal{C}^{0,\alpha}(X,Y)^\mathbb{N} $. Assume that $(f_n)_{n\in\mathbb{N}}$ converges pointwise to a function $f$ and $\underset{n\in\mathbb{N}}{\sup}~|f_n|_\alpha<+\infty$. Then $f\in\mathcal{C}^{0,\alpha}(X)$ and $|f|_\alpha\leq \underset{n\in\mathbb{N}}{\sup}~|f_n|_\alpha$.
\end{lemma}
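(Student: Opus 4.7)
The plan is to apply the Hölder bound pointwise in $n$ and then pass to the limit, using continuity of the norm.

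First I would set $M := \sup_{n\in\mathbb{N}} |f_n|_\alpha$, which is finite by hypothesis. By definition of the seminorm, for every $n \in \mathbb{N}$ and every pair $x, y \in X$ with $x \neq y$, one has
\begin{align*}
\|f_n(x) - f_n(y)\| \leq M \, d(x,y)^\alpha.
\end{align*}
Next I would fix $x, y \in X$ with $x \neq y$ and let $n \to \infty$. Since $(f_n)$ converges pointwise to $f$, we have $f_n(x) \to f(x)$ and $f_n(y) \to f(y)$ in $(Y, \|\cdot\|)$, so $f_n(x) - f_n(y) \to f(x) - f(y)$. By continuity of the norm, $\|f_n(x) - f_n(y)\| \to \|f(x) - f(y)\|$, and therefore
\begin{align*}
\|f(x) - f(y)\| \leq M \, d(x,y)^\alpha.
\end{align*}

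This bound shows directly that $f$ is $\alpha$-Hölder continuous (in particular, it is continuous, so $f \in \mathcal{C}(X,Y)$, which is required for membership in $\mathcal{C}^{0,\alpha}(X,Y)$). Taking the infimum over $x \neq y$ of $\|f(x)-f(y)\|/d(x,y)^\alpha$ on the left — or more precisely the supremum, since that is what the seminorm measures (the definition in the excerpt writes $\inf$ but clearly intends the supremum of the Hölder ratios; the bound $M$ holds uniformly in $x \neq y$ regardless) — yields $|f|_\alpha \leq M = \sup_n |f_n|_\alpha$, which is the desired conclusion.

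There is no real obstacle here: the statement is essentially the closedness of the Hölder unit ball under pointwise convergence, and the only ingredient beyond the definitions is continuity of $\|\cdot\|$. The proof can be written in a handful of lines.
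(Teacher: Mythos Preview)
Your proof is correct and essentially identical to the paper's: both fix $x,y$, use the uniform Hölder bound for $f_n$, and pass to the limit via pointwise convergence (the paper phrases the limit step with the triangle inequality rather than ``continuity of the norm,'' but that is the same thing). Your remark that the seminorm definition in the paper should read $\sup$ rather than $\inf$ is also correct.
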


\begin{proof}
    Let $C\defeq \underset{n\in\mathbb{N}}{\sup}~|f_n|_\alpha$. For $x,y\in\mathbb{X}$ and any $n\in\mathbb{N}$:
    \begin{align*}
        \lVert f_n(x)-f_n(y)\rVert\leq |f_n|_\alpha d(x,y)^\alpha\leq C d(x,y)^\alpha.
    \end{align*}
    By the triangle inequality:
    \begin{align*}
        \lVert f(x)-f(y)\rVert&\leq C d(x,y)^\alpha + \lVert f_n(x)-f(x)\rVert + \lVert f_n(y)-f(y)\rVert.
    \end{align*}
    Therefore, by the pointwise convergence:
    \begin{align*}
        \lVert f(x)-f(y)\rVert&\leq C d(x,y)^\alpha.
    \end{align*}
    Thus, $|f|_\alpha\leq C$.\qedhere
\end{proof}
Proposition~\ref{prop2.3} is a result of interpolation on Hölder spaces.
\begin{prop}\label{prop2.3}
       Let $0<\beta<\alpha\leq1$, $t=\frac{\beta}{\alpha} \in (0,1)$, and $f\in\mathcal{C}^{0,\alpha}(X,Y)$. Then $|f|_\beta\leq (2\lVert f\rVert_{\infty})^{1-t} |f|_\alpha^{t}$.  
\end{prop}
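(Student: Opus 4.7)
The plan is to prove the interpolation inequality pointwise and then take a supremum, using that for any two non-negative upper bounds $A$ and $B$ on the same quantity $q$, one also has $q \leq A^{1-t} B^{t}$ for all $t \in [0,1]$ (this follows from $q = q^{1-t} q^{t} \leq A^{1-t} B^{t}$).

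First I would fix $x, y \in X$ with $x \neq y$ and write down two estimates on $\lVert f(x) - f(y) \rVert$. The trivial one from the sup norm gives
\begin{align*}
\lVert f(x) - f(y) \rVert \leq 2 \lVert f \rVert_{\infty},
\end{align*}
while the Hölder hypothesis gives
\begin{align*}
\lVert f(x) - f(y) \rVert \leq |f|_\alpha \, d(x,y)^\alpha.
\end{align*}
Taking the weighted geometric mean of these with weights $1-t$ and $t$ yields
\begin{align*}
\lVert f(x) - f(y) \rVert \leq (2\lVert f \rVert_\infty)^{1-t} |f|_\alpha^{\,t} \, d(x,y)^{\alpha t}.
\end{align*}

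Then I would use the key identity $\alpha t = \beta$ (which is exactly the definition of $t$) to rewrite the exponent of $d(x,y)$ as $\beta$. Dividing by $d(x,y)^\beta$ and passing to the infimum over all $x \neq y$ in the definition of $|\cdot|_\beta$ gives the claimed bound $|f|_\beta \leq (2\lVert f \rVert_\infty)^{1-t} |f|_\alpha^{\,t}$. There is no real obstacle here: the proof is a one-line pointwise comparison, and the only point worth noting is that the geometric mean step is valid because both bounding quantities are non-negative and $t \in (0,1)$.
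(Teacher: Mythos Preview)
Your proof is correct and follows exactly the same route as the paper: two trivial pointwise bounds on $\lVert f(x)-f(y)\rVert$, then a weighted geometric mean with weights $1-t$ and $t$, and finally the identification $\alpha t=\beta$. One small slip: in the last step you should take the \emph{supremum} over $x\neq y$ (the Hölder seminorm is a supremum, despite the paper's own typo in the definition), not the infimum.
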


\begin{proof}
    For $x,y\in X$:
    \begin{align*}
        \lVert f(x)-f(y)\rVert&\leq 2 \lVert f\rVert_\infty\\
        \text{and }  \lVert f(x)-f(y)\rVert&\leq |f|_\alpha d(x,y)^\alpha.
    \end{align*}
    Therefore,
    \begin{align*}
        \lVert f(x)-f(y)\rVert&\leq (2\lVert f\rVert_\infty)^{1-t} |f|_\alpha^{1-t} d(x,y)^{t\alpha}=(2\lVert f\rVert_\infty)^{1-t} |f|_\alpha^{t} d(x,y)^\alpha.
    \end{align*}
    Thus, $|f|_\beta\leq (2\lVert f\rVert_\infty)^{1-t}|f|_\alpha^{t}$.
\end{proof}
\newpage
 \section*{Acknowledgments}
The author thanks his thesis supervisor, Damien Thomine, for suggesting the model, his advice, and proofreading. He also thanks the Academic Writing Center of Paris Saclay University for proofreading.
 
\bibliographystyle{alpha}
\bibliography{biblio.bib}

\newpage 

\begin{center}
    {\LARGE\bfseries Erratum : Galton-Watson processes in dynamical environments}
\end{center}

Marc Peigné, whom we thank, found an error in the proof of \cite[Theorem 1.3.1]{M1}. Indeed, we see this theorem as a consequence of \cite[Corollary 1 and Theorem 3]{MR0298780}. However, in \cite{MR0298780}, the authors suppose that for all environments the law of reproduction is not a convex combination of $\delta_0$ and $\delta_1$. In other words, the probability to have two or more offspring is positive for all environments. In our article, we only suppose that:\begin{itemize}
    \item $\mu_x\neq\delta_0$ for all $x\in\mathbb{X}$,
    \item there are no non-empty, closed, and $T$-invariant subsets of $\{x\in\mathbb{X}:\mu_x=\delta_1\}$.
\end{itemize}
We propose a correction to \cite[Theorem 1.3.1]{M1} by considering the induced process in which the hypotheses of \cite{MR0298780} hold.

\section*{The results}

We recall the result \cite[Theorem 1.3.1]{M1} for which the proof is incorrect.

\encad{
\begin{hyp}[H\ref{V2hyp1}]\label{V2hyp1}~
\begin{enumerate}[label=\alph*)]
    \item $x\in\mathbb{X}\mapsto \mu_x$ is continuous.
    \item $\mu_x\neq\delta_0$ for all $x\in\mathbb{X}$.
    \item There are no non-empty, closed, $T$-invariant subsets of $\{x\in\mathbb{X}:\mu_x=\delta_1\}$.\label{1c}
\end{enumerate}
\end{hyp}}\medskip

\begin{theorem}\cite[Theorem 1.3.1]{M1}\label{V2thm1}
    Assume (H\ref{V2hyp1}). Let $\nu\in\mathcal{E}_T(\mathbb{X})$. Then, $\nu(N)=0$ if and only if $\mathbb{E}_\nu[\log m(\cdot)]>0$.
\end{theorem}

\section*{Corrected Proof}

\subsection*{Constant-step acceleration: dynamic powers}

Let $n_0\in\mathbb{N}^*$. We consider the Galton-Watson process in dynamical environments given by $(T^{n_0},\varphi^{(n_0)})$. Dynamically, this amounts to speeding up the process. In terms of the Galton-Watson process, this amounts to considering only those generations that are multiples of $n_0$. The following lemma shows that we can study the process $(T^{n_0},\varphi^{(n_0)})$ to obtain information on the extinction of the process $(T,\varphi)$.

\begin{lemma}\label{extN}
The probability of extinction is the same for the Galton-Watson process in dynamical environments given by $(T^{n_0},\varphi^{(n_0)})$ and $(T,\varphi)$. In particular, the set of bad environments of these two Galton-Watson processes in dynamical environments is the same.
\end{lemma}

\begin{proof}
Let $q^{(n_0)}$ be the probability of extinction of $(T^{n_0},\varphi^{(n_0)})$. Let $x\in\mathbb{X}$. We have that,\begin{align*}
    q(x)&=\underset{n\to\infty}{\lim} \varphi^{(n)}(x,0)=\underset{n\to\infty}{\lim} \varphi^{(nn_0)}(x,0)=q^{(n_0)}(x).\qedhere
\end{align*}
\end{proof}

The benefit of switching to the process $(T^{n_0},\varphi^{(n_0)})$ is that it sometimes satisfies properties that are better than those of the original system. The following lemma is an example of this.

\begin{lemma}\label{lemmedelta1}
If a Galton-Watson process in dynamical environments $(T,\varphi)$ satisfies the hypothesis \textquotedblleft there are no non-empty, closed, and $T$-invariant subsets of $\{x\in\mathbb{X}:\mu_x=\delta_1\}$ \textquotedblright, then there exists $n_0\in\mathbb{N}^*$ such that the Galton-Watson process in dynamical environments $(T^{n_0},\varphi^{(n_0)})$ verifies the stronger hypothesis \textquotedblleft for all $x\in\mathbb{X}$, $\mu^{(n_0)}_x\neq \delta_1$\textquotedblright, (where $x\in\mathbb{X}\mapsto \mu^{(n_0)}_x $ is the law of reproduction of $(T^{n_0},\varphi^{(n_0)})$).
\end{lemma}

\begin{proof}
If there exists $x\in\mathbb{X}$ such that $\mu_{T^nx}=\delta_1$ for all $n\in\mathbb{N}$, then $\overline{\{T^kx:k\in\mathbb{N}\}}$ is a non-empty, closed, $T$-invariant subset of $\{x\in\mathbb{X}:\mu_x=\delta_1\}$ which contradicts the hypothesis. 
For all $n\in\mathbb{N}$, let\begin{align*}
        K_n\defeq\left\{x\in\mathbb{X}: \mu_{T^kx}=\delta_1 \text{ for all } k\in\llbracket0,n-1\rrbracket\right\}.
    \end{align*} 
Since $\mu$ and $T$ are continuous, $K_n$ is compact for all $n\in\mathbb{N}$. Moreover, the sequence of compact sets $(K_n)_{n\in\mathbb{N}}$ is decreasing and $\underset{n\in\mathbb{N}^*}{\bigcap} K_n=\emptyset$. Therefore, there exists $n_0\in\mathbb{N}^*$ such that $K_{n_0}=\emptyset$. Thus, for all $x\in\mathbb{X}$, $\mu^{(n_0)}_x\neq \delta_1$.
\end{proof}

\subsection*{Acceleration by return time: induced process}

Let $\nu\in\mathcal{E}_T(\mathbb{X})$ and $A\in\mathcal{B}(\mathbb{X})$ be such that $\nu(A)>0$. Let $\nu_A\in\mathcal{P}(A)$ be the restriction of $\nu$ to $A$ normalized to be a probability. 

The first return time to $A$ is the map $\tau_A$ from $A$ to $\mathbb{N}^*\cup\{+\infty\}$ defined for all $x\in A$ by: \begin{align*}
    \tau_A(x)=\min\{n\in\mathbb{N}^*:T^n(x)\in A\}.
\end{align*}
By Poincaré's recurrence theorem, $\tau_A$ is finite for $\nu$-almost all $x\in A$.
We define the induced transformation of $T$ on $A$ by, for all $x\in A$, \begin{align*}
T_A(x)\defeq\left\{\begin{array}{lll}
    T^{\tau_A(x)}x&\text{if} &\tau_A(x)<+\infty\\
     x&\text{else} 
    \end{array}\right. ,
     \end{align*}

We define the probability generating function by, for all $(x,s)\in A\times [0,1]$ \begin{align*}
\varphi_A(x,s)\defeq\left\{\begin{array}{lll}
    \varphi^{(\tau_A(x))}(x,s)&\text{if} &\tau_A(x)<+\infty\\
     \varphi(x,s)&\text{else} 
    \end{array}\right. .
     \end{align*} 
Let $\mu^{(A)}$ be the associated law of reproduction. 

In this case, the transformation $T_A$ and the law of reproduction $\mu^{(A)}$ are not necessarily continuous.  Nevertheless, for each $x\in A$, let $(Z^A_n(x))_{n\in\mathbb{N}}$ be the Galton–Watson process in varying environments associated with the sequence of laws of reproduction $(\mu^{(A)}_{T_A^n x})_{n\in\mathbb{N}}$. If we choose the initial environment $x$ according to $\nu_A$, then $\mu^{(A)}_{T_A^n x}$ is a stationary and ergodic process that allows us to use Athreya and Karlin's results \cite{MR0298780}.

We denote by $q^{(A)}$ the probability of extinction of $(T^{A},\varphi^{(A)})$, and by $N_A$ the associated set of bad environments.

\begin{lemma}
For $\nu$- (or $\nu_A$-) almost all $x\in A$, \begin{align*}
    q(x)=q_A(x).
\end{align*}
In particular, $N_A=N\cap A$ (up to a set of $\nu$-measure zero).
\end{lemma}

\begin{proof}
 For all $x\in A\setminus E$, for all $n\in\mathbb{N}^*$, \begin{align*}
    \varphi^{(\tau_A(x)+\cdots+\tau_A(T_A^{n-1}x))}(x,s)=\varphi_A^{(n)}(x,0).
\end{align*}
Taking the limit as $n\to +\infty$, the conclusion follows for all $x\in A\setminus E$, which is a set of full $\nu_A$-measure.
\end{proof}

The following lemma allows us to connect the systems $(T,\varphi)$ and $(T^{A},\varphi^{(A)})$.

\begin{lemma}\label{logmA}
For $\nu$- (or $\nu_A$-) almost all $x\in A$, \begin{align*}
    \log(\partial_s\varphi_A(x,1))=\sum_{i=0}^{\tau_A(x)-1}\log m(T^ix).
\end{align*}
\end{lemma}

\begin{proof}
The conclusion follows by \cite[Corollary 2.1.3]{M1} for all $x\in  A\setminus E $.
\end{proof}

\subsection*{Proof of the theorem}

\begin{proof}[Proof of Theorem~\ref{V2thm1}]
\underline{First step:} We will start by proving the theorem under the additional assumption that $\mu_x\neq \delta_1$ for all $x\in\mathbb{X}$.

Let $A=\{x\in\mathbb{X}:\mu_x(\{0,1\})<1\}$ and $\nu\in\mathcal{E}_T(\mathbb{X})$.

Suppose that $\nu(A)=0$. Then, for $\nu$ almost all $x\in\mathbb{X}$,\begin{align}\label{eqx}
    \mu_x(\{0,1\})=1.
\end{align}
By the continuity of $\mu$, there exists $C>0$ such that \begin{align}\label{eqc}
    \mu_x(1)<1-C.
\end{align} 
Thus, by \eqref{eqx} and \eqref{eqc}, $\nu(N)=1$. Moreover, by \eqref{eqx} and by Birkhoff's ergodic theorem, \begin{align*}
    \mathbb{E}_\nu[\log m(\cdot)]\leq 0.
\end{align*} 

Suppose now that $\nu(A)>0$. We'll start by showing that, for the induced transformation, the assumptions of \cite[Corollary 1 and Theorem 3]{MR0298780} hold.

By definition of the set $A$, for all $x\in A$, $\mu_x(\{0,1\})<1$. We recall that $x\in\mathbb{X}$, $\mu_x\neq\delta_0$ and $\mu_x\neq\delta_1$. For all $x\in A$, the function $s\mapsto\varphi_A(x,s)$ is not affine, as it is a composition of non-constant affine functions and the function $s\mapsto\varphi(x,s)$, which has an order greater than or equal to two. In other words, the law of reproduction of the induced process is not a convex combination of $\delta_0$ and $\delta_1$, which is a hypothesis of \cite[Corollary 1 and Theorem 3]{MR0298780} that is not verified by the process $(T,\varphi)$.

Since for all $x\in\mathbb{X}$, $\mu_x\neq\delta_0$ and $\mu$ is continuous, there exists $C>0$ such that $1-\mu_x(0)\geq C$ for all $x\in\mathbb{X}$.
Let $x\in A\setminus E$ (where $E=\uo{n=0}{+\infty}{\bigcup}T_A^{-n}\{x\in A:
\tau_A(x)=+\infty\}\subset A)$. Then, \begin{align*}
    -\log (1-\varphi_A(x,0))=-\log(1-\mu^{(A)}_x(0))\leq -\tau_A(x)\log(C).
\end{align*}
As $A\setminus E$ is of $\nu_A$ full measure, then\begin{align}\label{eqAK1}
    \mathbb{E}_{\nu_A}[-\log(1-\varphi_A(\cdot,0))]\leq -\log(C)\int_{A}\tau_A(x)\, \mathrm{d}\nu_A(x)<+\infty,
\end{align} by Kac's theorem.

Moreover, for all  $x\in A\setminus E$, by Lemma~\ref{logmA},
\begin{align*}
    \log(\partial_s\varphi_A(x,1))=\sum_{i=0}^{\tau_A(x)-1}\log m(T^ix)\geq \sum_{i=0}^{\tau_A(x)-1}\log (1-\mu_{T^ix}(0))\geq \tau_A(x)\log(C).
\end{align*}

As shown above, we obtain that:
\begin{align}\label{eqAK2}
   \mathbb{E}_{\nu_A}[-\log(1-\varphi_A(\cdot,0))]^-<+\infty 
\end{align}

We have verified via \eqref{eqAK1} and \eqref{eqAK2} that the two integrability hypotheses required for the application of \cite[Corollary 1 and Theorem 3]{MR0298780} hold. Thus, we obtain that
\begin{align}\label{eqthm1}
   \nu_A(N_A)=0 \qquad \text{if and only if}\qquad \mathbb{E}_{\nu_A}[\log(\partial_s\varphi_A(\cdot,1))]>0. 
\end{align}

Now we need to return to the original system.

\begin{itemize}
    \item Since $N_A=N\cap A$ (up to a set of $\nu$-measure zero), $\nu(A)>0$ and $\nu_A(N_A),\nu(N)\in\{0,1\}$, \begin{align}
        \nu_A(N_A)=0 \qquad \text{if and only if} \qquad \nu(N)=0.\label{eqthm2}
    \end{align}
    \item By Kac's Formula and by Lemma~\ref{logmA}, \begin{align*}
        \int_{\mathbb{X}}\log m(x)\, \mathrm{d}\nu(x)
        &=\int_{A}\sum_{i=0}^{\tau_A(x)-1}\log m(T^ix)\, \mathrm{d}\nu(x)\\
        &= \int_{A}\log(\partial_s\varphi_A(x,1))\, \mathrm{d}\nu(x)\\
        &= \nu(A) \int_{A}\log(\partial_s\varphi_A(x,1))\, \mathrm{d}\nu_A(x).
    \end{align*}
    Thus, \begin{align}
        \mathbb{E}_{\nu_A}[\log(\partial_s\varphi_A(\cdot,1))]>0 \qquad \text{if and only if} \qquad \mathbb{E}_\nu[\log m(\cdot)]>0.\label{eqthm3}
    \end{align}
    The conclusion of the first step follows by Equivalences~\eqref{eqthm1}, \eqref{eqthm2}, and \eqref{eqthm3}.
\end{itemize}

\underline{Second step:}
By Lemma~\ref{lemmedelta1} there exists $n_0\in\mathbb{N}^*$ such that the Galton-Watson process in dynamical environments $(T^{n_0},\varphi^{(n_0)})$ verifies the stronger hypothesis \textquotedblleft for all $x\in\mathbb{X}$, $\mu^{(n_0)}_x\neq \delta_1$\textquotedblright. We obtain (by the first step) that for all $\nu\in\mathcal{E}_{T^{n_0}}(\mathbb{X})$,
    \begin{align}\label{eqitere}
        \nu(N)=0 \qquad\text{if and only if}\qquad \mathbb{E}_\nu[\log \partial_s\varphi^{(n_0)}(\cdot,1)]>0,
    \end{align} 
    because (Lemma~\ref{extN}) the set of bad environments is the same for systems $(T,\varphi)$ and $(T^{n_0},\varphi^{(n_0)})$.
    
    Let $\nu\in\mathcal{E}_{T}(\mathbb{X})$. By Choquet's integral representation theorem~\cite{SB_1956-1958__4__33_0}, there exists a probability measure $P$ on $\mathcal{E}_{T^{n_0}}(\mathbb{X})$ such that: \begin{align*}
    \nu=\int_{\mathcal{E}_{T^{n_0}}(\mathbb{X})}\widetilde{\nu}\, \mathrm{d}P(\widetilde{\nu}).
\end{align*}
Moreover, as $\nu$ is $T$-invariant, \begin{align}
    \mathbb{E}_\nu[\log m(\cdot)]&=\frac{1}{n_0}\mathbb{E}_\nu\left[\sum_{k=0}^{n_0-1}\log m\circ T^k(\cdot)\right]\nonumber\\
    &=\frac{1}{n_0}\mathbb{E}_\nu\left[\log \partial_s\varphi^{(n_0)}(\cdot,1)\right]\nonumber\\
    &= \frac{1}{n_0}\int_{\mathcal{E}_{T^{n_0}}(\mathbb{X})} \mathbb{E}_{\widetilde{\nu}}[\log \partial_s\varphi^{(n_0)}(\cdot,1)] \, \mathrm{d}P(\widetilde{\nu}).\label{eqdec}
\end{align}
By \cite[Proposition 2.3.1]{M1}, we know that $\nu(N)\in\{0,1\}$. \begin{itemize}
    \item If $\nu(N)=0$, then\begin{align*}
        \int_{\mathcal{E}_{T^{n_0}}(\mathbb{X})}\widetilde{\nu}(N)\, \mathrm{d}P(\widetilde{\nu})=0.
    \end{align*}  It follows that $\widetilde{\nu}(N)=0$ for $P$-almost every $\widetilde{\nu} \in \mathcal{E}_{T^{n_0}}(\mathbb{X})$ (because $\widetilde{\nu}(N)\in\{0,1\}$).  According to \eqref{eqitere}, this implies $\mathbb{E}_{\widetilde{\nu}}[\log \partial_s\varphi^{(n_0)}(\cdot,1)] > 0$ $P$-almost surely. We conclude by~\eqref{eqdec} that:
    \begin{align*}
        \mathbb{E}_\nu[\log m(\cdot)]=\frac{1}{n_0} \int_{\mathcal{E}_{T^{n_0}}(\mathbb{X})} \mathbb{E}_{\widetilde{\nu}}[\log \partial_s\varphi^{(n_0)}(\cdot,1)] \, \mathrm{d}P(\widetilde{\nu}) > 0.
    \end{align*}
     \item If $\nu(N)=1$, then\begin{align*}
        \int_{\mathcal{E}_{T^{n_0}}(\mathbb{X})}\widetilde{\nu}(N)\, \mathrm{d}P(\widetilde{\nu})=1.
    \end{align*}  It follows that $\widetilde{\nu}(N)=1$ for $P$-almost every $\widetilde{\nu} \in \mathcal{E}_{T^{n_0}}(\mathbb{X})$.  According to \eqref{eqitere} again, this implies $\mathbb{E}_{\widetilde{\nu}}[\log m(\cdot)] \leq 0$ $P$-almost surely. We conclude again by~\eqref{eqdec} that:
    \begin{align*}
        \mathbb{E}_\nu[\log m(\cdot)]&=\frac{1}{n_0} \int_{\mathcal{E}_{T^{n_0}}(\mathbb{X})} \mathbb{E}_{\widetilde{\nu}}[\log \partial_s\varphi^{(n_0)}(\cdot,1)] \, \mathrm{d}P(\widetilde{\nu})\leq 0.\qedhere
    \end{align*}
\end{itemize}
\end{proof}

\end{document}